
\documentclass[nohyperref]{article}

\usepackage{microtype}
\usepackage{graphicx}
\usepackage{subfigure}
\usepackage{booktabs} 

\usepackage{hyperref}



\usepackage[accepted]{icml2022}

\usepackage{amsmath}
\usepackage{amssymb}
\usepackage{mathtools}
\usepackage{amsthm}

\usepackage[capitalize,noabbrev]{cleveref}

\theoremstyle{plain}
\newtheorem{theorem}{Theorem}[section]
\newtheorem{proposition}[theorem]{Proposition}

\newtheorem{corollary}[theorem]{Corollary}
\theoremstyle{definition}
\newtheorem{definition}[theorem]{Definition}
\newtheorem{assumption}[theorem]{Assumption}
\theoremstyle{remark}

\usepackage[textsize=tiny]{todonotes}

\usepackage{thmtools}
\usepackage{thm-restate}
\usepackage{float}
\usepackage{listings}

\usepackage{color}
\usepackage{colortbl}
\definecolor{lightgray}{gray}{0.9}

\newcommand{\grad}{\operatorname{grad}}
\newcommand{\Hess}{\operatorname{Hess}}
\newcommand{\diam}{\operatorname{diam}}

\icmltitlerunning{Accelerated Gradient Methods for Geodesically Convex Optimization}

\begin{document}

\twocolumn[
\icmltitle{Accelerated Gradient Methods for Geodesically Convex Optimization: Tractable Algorithms and Convergence Analysis}



\icmlsetsymbol{equal}{*}

\begin{icmlauthorlist}
\icmlauthor{Jungbin Kim}{yyy}
\icmlauthor{Insoon Yang}{yyy}
\end{icmlauthorlist}

\icmlaffiliation{yyy}{Department of Electrical and Computer Engineering, Seoul National University, Seoul, South Korea}

\icmlcorrespondingauthor{Insoon Yang}{insoonyang@snu.ac.kr}

\icmlkeywords{Optimization}

\vskip 0.3in
]



\printAffiliationsAndNotice{}  

\begin{abstract}
	We propose computationally tractable accelerated first-order methods for Riemannian optimization, extending the Nesterov accelerated gradient (NAG) method. For both geodesically convex and geodesically strongly convex objective functions, our algorithms are shown to have the same iteration complexities as those for the NAG method on Euclidean spaces, under only standard assumptions. To the best of our knowledge, the proposed scheme is the first fully accelerated method for geodesically convex optimization problems.
	Our convergence analysis makes use of novel metric distortion lemmas as well as carefully designed potential functions. A connection with the continuous-time dynamics for modeling Riemannian acceleration in \citep{alimisis2020continuous} is also identified by letting the stepsize tend to zero. We validate our theoretical results through numerical experiments. 
\end{abstract}

\section{Introduction}
\label{sec:introduction}

\renewcommand{\arraystretch}{1.2}

\begin{table*}[ht]
	\caption{Iteration complexities (required number of iterations to obtain an $\epsilon$-approximate solution) for various accelerated methods on Riemannian manifolds. The notation $\tilde{O}(\cdot)$ and $O^{*}(\cdot)$ omits $\log (L/\epsilon)$ and $\log (L/\mu)$ factors, respectively \citep{martinezrubio2021global}. For our algorithms, the constant $\xi$ is defined as $\xi=\zeta+3(\zeta-\delta)$, where $\zeta$ and $\delta$ are defined in \cref{sec:assumption}. For the iteration complexity of RAGD \citep{zhang2018estimate}, $\frac{10}{9}$ is not regarded as a constant because this constant arises from their nonstandard assumption $d\left(x_{0},x^{*}\right)\leq\frac{1}{20\sqrt{\max\left\{ K_{\max},-K_{\min}\right\} }}\left(\frac{\mu}{L}\right)^{\frac{3}{4}}$.}
	\label{table:complexity}
	\vskip 0.15in
	\begin{center}
		\begin{tabular}{|c|c|c|c|}
			\hline 
			\rowcolor{lightgray}
			Algorithm & Objective function & Iteration complexity & Remark\\ \hline
			Algorithm~1 \citep{liu2017accelerated} & g-strongly convex & $O\left(\sqrt{L/\mu}\log\left(L/\epsilon\right)\right)$ & computationally intractable \\ \hline
			Algorithm~2 \citep{liu2017accelerated} & g-convex & $O\left(\sqrt{L/\epsilon}\right)$ & computationally  intractable \\ \hline
			RAGD \citep{zhang2018estimate} & g-strongly convex & $O\left((10/9)\sqrt{L/\mu}\log\left(L/\epsilon\right)\right)$ & nonstandard assumption \\ \hline
			Algorithm~1 \citep{ahn2020from} & g-strongly convex & $O^{*}\left(L/\mu + \sqrt{L/\mu}\log\left(\mu/\epsilon\right)\right)$ & eventually accelerated \\ \hline
			RAGDsDR \citep{alimisis2021momentum} & g-convex & $O\left(\sqrt{\zeta L/\epsilon}\right)$ & only in early stages \\ \hline
			\citep{martinezrubio2021global} & g-convex & $\tilde{O}\left(\sqrt{L/\epsilon}\right)$ & only for constant curvature \\ \hline
			\citep{martinezrubio2021global} & g-strongly convex & $O^{*}\left( \sqrt{L/\mu}\log\left(\mu/\epsilon\right)\right)$ & only for constant curvature \\ \hline
			RNAG-C (ours) & g-convex & $O\left(\xi\sqrt{L/\epsilon}\right)$ &  \\ \hline
			RNAG-SC (ours) & g-strongly convex & $O\left(\xi\sqrt{L/\mu}\log\left(L/\epsilon\right)\right)$ &  \\ \hline
		\end{tabular}
	\end{center}
	\vskip -0.1in
\end{table*}

\renewcommand{\arraystretch}{1}

We consider Riemannian optimization problems of the form
\begin{equation}
	\label{eq:problem}
	\min_{x \in N\subseteq M} \; f(x),
\end{equation}
where $M$ is a Riemannian manifold, $N$ is an open geodesically uniquely convex subset of $M$, and $f:N\rightarrow\mathbb{R}$ is a continuously differentiable \emph{geodesically convex} function.
Geodesically convex optimization is the Riemannian version of convex optimization  and has salient features such as every local minimum being a global minimum. 
More interestingly, some (constrained) nonconvex optimization problems defined in the Euclidean space can be considered geodesically convex optimization problems on appropriate Riemannian manifolds~\citep[Section~1]{vishnoi2018geodesic}. 
Geodesically convex optimization has a wide range of applications,  including covariance estimation~\citep{wiesel2012geodesic}, Gaussian mixture models~\citep{hosseini2015matrix,hosseini2020alternative}, matrix square root computation~\citep{sra2015matrix}, metric learning~\citep{zadeh2016geometric}, and optimistic likelihood calculation~\citep{nguyen2019calculating}. See \citep[Section~1.1]{zhang2016first} for more examples.

The iteration complexity theory for first-order algorithms is well known when $M=\mathbb{R}^n$. Given an initial point $x_0$, gradient descent (GD) updates the iterates as
\begin{equation}
	\label{eq:gd}
	\tag{GD}
	x_{k+1}=x_k-\gamma_k\grad f\left(x_k\right).
\end{equation}
For a convex and $L$-smooth objective function $f$, \ref{eq:gd} with $\gamma_k=\frac{1}{L}$ finds an $\epsilon$-approximate solution, i.e., $f\left(x_k\right)-f\left(x^{*}\right)\leq\epsilon$, in $O\left(\frac{L}{\epsilon}\right)$ iterations. For a $\mu$-strongly convex and $L$-smooth objective function $f$, \ref{eq:gd} with $\gamma_k=\frac{1}{L}$ finds an $\epsilon$-approximate solution in $O\left(\frac{L}{\mu}\log\frac{L}{\epsilon}\right)$ iterations.
A major breakthrough in first-order algorithms is the Nesterov accelerated gradient (NAG) method that achieves a faster convergence rate than \ref{eq:gd}~\cite{nesterov1983}.

Given an initial point $x_0=z_0$, the NAG scheme updates the iterates as
\begin{equation}
	\label{eq:nag}
	\tag{NAG}
	\begin{aligned}
		y_{k} & =x_{k}+\tau_k\left(z_{k}-x_{k}\right)\\
		x_{k+1} & =y_{k}-\alpha_k\grad f\left(y_{k}\right)\\
		z_{k+1} & =y_{k}+\beta_k\left(z_{k}-y_{k}\right)-\gamma_k\grad f\left(y_{k}\right).
	\end{aligned}
\end{equation}
For a convex and $L$-smooth function $f$, \ref{eq:nag} with $\tau_k=\frac{2}{k+2}$, $\alpha_k=\frac{1}{L}$, $\beta_k=1$, $\gamma_k=\frac{k+2}{2L}$ (NAG-C) finds an $\epsilon$-approximate solution in $O\left(\sqrt{\frac{L}{\epsilon}}\right)$ iterations \citep{tseng2008accelerated}. For a $\mu$-strongly convex and $L$-smooth objective function~$f$, \ref{eq:nag} with $\tau_k=\frac{\sqrt{\mu/L}}{1+\sqrt{\mu/L}}$, $\alpha_k=\frac{1}{L}$, $\beta_k=1-\sqrt{\frac{\mu}{L}}$, $\gamma_k=\sqrt{\frac{\mu}{L}}\frac{1}{\mu}$ (NAG-SC) finds an $\epsilon$-approximate solution in $O\left(\sqrt{\frac{L}{\mu}}\log\frac{L}{\epsilon}\right)$ iterations \citep{nesterov2018lectures}.

Considering the problem~\eqref{eq:problem} for any Riemannian manifold $M$, \citep{zhang2016first} successfully generalizes the complexity analysis of \ref{eq:gd} to Riemannian gradient descent (RGD),
\begin{equation}
	\label{eq:rgd}
	\tag{RGD}
	x_{k+1}=\exp_{x_k}\left(-\gamma_k\grad f\left(x_k\right)\right),
\end{equation}
using a lower bound $K_{\min}$ of the sectional curvature and an upper bound $D$ of $\diam(N)$.
For completeness, we provide a potential-function analysis in \cref{app:rgd} to show that \ref{eq:rgd} with a fixed stepsize has the same iteration complexity as \ref{eq:gd}.

However, it is still unclear whether a reasonable generalization of NAG to the Riemannian setting is possible with strong theoretical guarantees. When studying the global complexity of Riemannian optimization algorithms, it is common to assume that the sectional curvature of $M$ is bounded below by $K_{\min}$ and bounded above by $K_{\max}$  to prevent the manifold from being overly curved. Unfortunately, \citep{criscitiello2021negative,hamilton2021nogo} show that even when sectional curvature is bounded, achieving global acceleration is impossible in general. Thus, one might need another common assumption, an upper bound $D$ of $\diam(N)$. 
This motivates our central question:

\begin{center}
	\emph{Can we design computationally tractable accelerated first-order methods on Riemannian manifolds when the sectional curvature and the diameter of the domain are bounded?}
\end{center}

In the literature, there are  some partial answers but no full answer to this question (see \cref{table:complexity} and \cref{sec:related}). In this paper, we provide a complete answer via new first-order algorithms, which we call the Riemannian Nesterov accelerated gradient (RNAG) method. 
We show that acceleration is possible on Riemannian manifolds for both geodesically convex (g-convex) and geodesically strongly convex (g-strongly convex) cases whenever the bounds $K_{\min}$, $K_{\max}$, and $D$ are available. The main contributions of this work can be summarized as follows:

\begin{itemize}
	\item Generalizing Nesterov's scheme, we propose \ref{eq:rnag}, a first-order method for Riemannian optimization. We provide two specific algorithms: RNAG-C (\cref{alg:rnag-c}) for minimizing g-convex functions and RNAG-SC (\cref{alg:rnag-sc}) for minimizing g-strongly convex functions. 
	Both algorithms call one gradient oracle per iteration. Our algorithms are computationally tractable in the sense that they only involve exponential maps, logarithm maps, parallel transport, and operations in tangent spaces.
	In particular, RNAG-C can be interpreted as a variant of NAG-C with high friction in \citep[Section~4.1]{su2014} (see \cref{app:su}). 
	
	\item Given the bounds $K_{\min}$, $K_{\max}$, and $D$, we prove that RNAG-C has an $O\left(\sqrt{\frac{L}{\epsilon}}\right)$ iteration complexity (\cref{cor:rnag-c}), and that RNAG-SC has an $O\left(\sqrt{\frac{L}{\mu}}\log\frac{L}{\epsilon}\right)$ iteration complexity (\cref{cor:rnag-sc}). The crucial steps of the proofs are constructing potential functions as \eqref{eq:potential} and handling metric distortion using \cref{lem:distortion1} and \cref{lem:distortion2}. To the best of our knowledge, this is the first proof for full acceleration in the g-convex case.
	
	\item We identify a connection between our algorithms and the ODEs for modeling Riemannian acceleration in \citep{alimisis2020continuous} by letting the stepsize tend to zero. 
	This analysis confirms the accelerated convergence of our algorithms through the lens of continuous-time flows. 
	
\end{itemize}

\section{Related Work}
\label{sec:related}

Given a bound $D$ for $\diam(N)$, \citep{liu2017accelerated} proposed accelerated methods for both g-convex and g-strongly convex cases. Their algorithms have the same iteration complexities as \ref{eq:nag} but require a solution to a nonlinear equation at every iteration, which could be as difficult as solving the original problem in general. Given $K_{\min}$, $K_{\max}$, and $d\left(x_0,x^{*}\right)$, \citep{zhang2018estimate} proposed a computationally tractable algorithm for the g-strongly convex case and showed that their algorithm achieves the iteration complexity $O\left(\frac{10}{9}\sqrt{\frac{L}{\mu}}\log\frac{L}{\epsilon}\right)$ when $d\left(x_{0},x^{*}\right)\leq\frac{1}{20\sqrt{\max\left\{ K_{\max},-K_{\min}\right\} }}\left(\frac{\mu}{L}\right)^{\frac{3}{4}}$. Given only $K_{\min}$ and $K_{\max}$, \citep{ahn2020from} considered the g-strongly convex case. Although full acceleration is not guaranteed, the authors proved that their algorithm eventually achieves acceleration in later stages. 
Given $K_{\min}$, $K_{\max}$, and $D$, \citep{alimisis2021momentum} proposed a momentum method for the g-convex case. 
They showed that their algorithm achieves acceleration in early stages. Although this result is not as strong as full acceleration, their theoretical guarantee is meaningful in practical situations. \citep{martinezrubio2021global} focused on manifolds with  constant sectional curvatures, namely a subset of the hyperbolic space or sphere. Their algorithm is accelerated, but it is not straightforward to generalize their argument to any manifolds. 
Beyond the g-convex setting, \citep{criscitiello2020accelerated} studied accelerated methods for nonconvex problems. \citep{lezcanocasado2020adaptive} studied adaptive and momentum-based methods using the trivialization framework in \citep{lezcanocasado2019trivializations}. Further works on accelerated Riemannian optimization can be found in \citep[Section~1.6]{criscitiello2021negative}.

Another line of research takes the perspective of continuous-time dynamics as in the Euclidean counterpart~\citep{su2014,wibisono2016,wilson2021lyapunov}. 
For both g-convex and g-strongly convex cases, \citep{alimisis2020continuous} proposed ODEs that can model accelerated methods on Riemannian manifolds given $K_{\min}$ and $D$. \citep{duruisseaux2021variational} extended this result and developed a variational framework. 
Time-discretization methods for such ODEs on Riemannian manifolds have recently been of considerable interest as well~\citep{duruisseaux2021accelerated,francca2021optimization,duruisseaux2022accelerated}. 

While many positive results have been obtained for accelerated Riemannian optimization, there are also a few negative results \citep{hamilton2021nogo} and \citep{criscitiello2021negative}, showing that achieving full acceleration for Riemannian optimization is impossible in general. Because their results involve a growing diameter of domain and most of the positive results assume that the diameter of domain is bounded by a constant $D$, the negative result is not contradictory but complementary to the positive results. This indicates that the assumption of bounding the domain by a constant is necessary for achieving  full acceleration. See \cref{sec:discussion} for a detailed discussion.

\section{Preliminaries}

\subsection{Background}

A Riemannian manifold $(M,g)$ is a real smooth manifold equipped with a Riemannian metric $g$ which assigns to each $p\in M$ a positive-definite inner product $g_{p}(v,w)=\langle v,w\rangle_p=\langle v,w\rangle$ on the tangent space $T_p M$. The inner product $g_p$ induces the norm $\left\Vert v\right\Vert _{p}=\Vert v\Vert$ defined as $\sqrt{\langle v,v\rangle_{p}}$ on $T_pM$. The tangent bundle $TM$ of $M$ is defined  as $TM=\sqcup_{p\in M}T_{p}M$. For $p,q\in M$, the geodesic distance $d(p,q)$ between $p$ and $q$ is the infimum of the length of all piecewise continuously differentiable curves from $p$ to $q$. For nonempty set $N\subseteq M$, the diameter $\diam(N)$ of $N$ is defined as $\diam(N)=\sup_{p,q\in N}d(p,q)$.

For a smooth function $f:M\rightarrow\mathbb{R}$, the Riemannian gradient $\grad f(x)$ of $f$ at $x$ is defined as the tangent vector in $T_xM$ satisfying
\[
\langle \grad f(x),v\rangle=df(x)[v],
\]
where $df(x):T_x M\rightarrow \mathbb{R}$ is the differential of $f$ at $x$. 
Let $I := [0,1]$.
A geodesic $\gamma: I \to M$ is a smooth curve of locally minimum length with zero acceleration.\footnote{The mathematical definition of acceleration is provided in \cref{app:background}.} 
In particular, straight lines in $\mathbb{R}^n$ are geodesics. The exponential map at $p$ is defined as, for $v\in T_pM$,
\[
\exp_p(v)=\gamma_v(1),
\]
where $\gamma_v:I\rightarrow M$ is the geodesic satisfying $\gamma_v(0)=p$ and $\gamma_v'(0)=v$. In general, $\exp_p$ is only defined on a neighborhood of $0$ in $T_pM$. It is known that $\exp_p$ is a diffeomorphism in some neighborhood $U$ of $0$. Thus, its inverse  is well defined and is called the logarithm map $\log_x:\exp_p(U)\rightarrow T_pM$.  For a smooth curve $\gamma:I\rightarrow M$ and $t_0,t_1\in I$, the parallel transport $\Gamma(\gamma)_{t_0}^{t_1}:T_{\gamma(t_0)}M\rightarrow T_{\gamma(t_1)}M$ is a way of transporting vectors from $T_{\gamma(t_0)}M$ to $T_{\gamma(t_1)}M$ along $\gamma$.\footnote{{The definition using covariant derivatives is contained in \cref{app:background}.}}   
When $\gamma$ is a geodesic, we 
let $\Gamma_p^q:T_pM\rightarrow T_qM$
denote the parallel transport from $T_pM$ to $T_qM$.

A subset $N$ of $M$ is said to be geodesically uniquely convex if for every $x,y\in N$, there exists a unique geodesic $\gamma:[0,1]\rightarrow M$ such that $\gamma(0)=x$, $\gamma(1)=y$, and $\gamma(t)\in N$ for all $t\in[0,1]$. Let $N$ be a geodesically uniquely convex subset of $M$. A function $f:N\rightarrow\mathbb{R}$ is said to be geodesically convex if $f\circ\gamma:[0,1]\rightarrow\mathbb{R}$ is convex for each geodesic $\gamma:[0,1]\rightarrow M$ whose image is in $N$. When $f$ is geodesically convex, we have
\[
f(y)\geq f(x)+\left\langle\grad f(x),\log_x(y)\right\rangle.
\]
Let $N$ be an open geodesically uniquely convex subset of $M$, and $f:N\rightarrow\mathbb{R}$ be a continuously differentiable function. We say that $f$ is geodesically $\mu$-strongly convex for $\mu>0$ if
\[
f(y)\geq f(x)+\left\langle\grad f(x),\log_x(y)\right\rangle+\frac{\mu}{2}\left\Vert \log_x(y)\right\Vert^2
\]
for all $x,y\in N$. We say that $f$ is geodesically $L$-smooth if 
\[
f(y)\leq f(x)+\left\langle\grad f(x),\log_x(y)\right\rangle+\frac{L}{2}\left\Vert \log_x(y)\right\Vert^2
\]
for all $x,y\in N$.

For additional notions from Riemannian geometry that are used in our analysis, we refer the reader to \cref{app:background} as well as the textbooks~\citep{lee2018riemannian,petersen,boumal2020intromanifolds}. 

\subsection{Assumptions}
\label{sec:assumption}

In this subsection, we present the assumptions that are imposed throughout the paper. 
\begin{assumption}
	\label{ass:1}
	The domain $N$ is an open geodesically uniquely convex subset of $M$. The diameter of the domain is bounded as $\mathrm{diam}(N) \leq D < \infty$. The sectional curvature inside $N$ is bounded below by $K_{\min}$ and bounded above by $K_{\max}$. If $K_{\max} > 0$, we further assume that $D < \frac{\pi}{\sqrt{K_{\max}}}$.
\end{assumption}

\cref{ass:1} implies that the exponential map $\exp_x$ is a diffeomorphism for any $x\in N$ \citep{alimisis2021momentum}.

\begin{assumption}
	\label{ass:2}
	The objective function $f: N \to \mathbb{R}$ is continuously differentiable and geodesically $L$-smooth. Moreover, $f$ is bounded below, and has minimizers, all of which lie in $N$.  A global minimizer is denoted by $x^{*}$.
\end{assumption}

\begin{assumption}
	\label{ass:3}
	All the iterates $x_k$ and $y_k$ are well-defined on the manifold $M$ remain in $N$.
\end{assumption}

Although Assumption~\ref{ass:3} is common in the literature \citep{zhang2018estimate,ahn2020from,alimisis2021momentum}, it is desirable to relax or remove it. 
We leave the extension as a future research topic.

To implement our algorithms, we also assume that we can compute (or approximate) exponential maps, logarithmic maps, and parallel transport. For many manifolds in practical applications, these maps are implemented in libraries such as~\citep{townsend2016pymanopt}.

We define the constants $\zeta\geq1$ and $\delta\leq1$ as
\begin{align*}
	\zeta & =\begin{cases}
		\sqrt{-K_{\min}}D\coth\left(\sqrt{-K_{\min}}D\right), & \text{if }K_{\min}<0\\
		1, & \text{if }K_{\min}\geq0
	\end{cases}\\
	\delta & =\begin{cases}
		1, & \text{if }K_{\max}\leq0\\
		\sqrt{K_{\max}}D\cot\left(\sqrt{K_{\max}}D\right), & \text{if }K_{\max}>0.
	\end{cases}
\end{align*}
These constants naturally arise from the Rauch comparison theorem \citep[Theorem~11.7]{lee2018riemannian} \citep[Theorem~6.4.3]{petersen}, and many known methods on Riemannian manifolds have a convergence rate depending on some of these constants \cite{alimisis2020continuous,alimisis2021momentum,zhang2016first}. Note that we can set $\zeta=\delta=1$ when $M=\mathbb{R}^n$.

\section{Algorithms}

In this section, we first generalize Nesterov's scheme to the Riemannian setting and then design specific algorithms for both g-convex and g-strongly convex cases. In \cite{ahn2020from,zhang2018estimate} \ref{eq:nag} is generalized to a three-step algorithm on a Riemannian manifold as
\begin{equation}
	\label{eq:ahn_scheme}
	\begin{aligned}
		y_{k} & =\exp_{x_{k}}\left(\tau_k\log_{x_{k}}\left(z_{k}\right)\right)\\
		x_{k+1} & =\exp_{y_{k}}\left(-\alpha_k\grad f\left(y_{k}\right)\right)\\
		z_{k+1} & =\exp_{y_{k}}\left(\beta_k\log_{y_{k}}\left(z_{k}\right)-\gamma_k\grad f\left(y_{k}\right)\right).
	\end{aligned}
\end{equation}
However, it is more natural to define the iterates $z_k$ in the tangent bundle $TM$, instead of in $M$.\footnote{The scheme \eqref{eq:ahn_scheme} always uses $z_k$ after mapping it to $TM$ via logarithm maps. The proof of convergence  needs the value of $f$ at $x_k$ and $y_k$. Thus, these iterates (but not $z_k$) should be defined in $M$. Considering continuous-time interpretation, the role of $z_k$ is similar to the role of velocity vector $\dot{X}$, which is defined in $TM$ (see \cref{sec:continuous}).} Thus, we propose another scheme that involves iterates in $TM$ without using $z_k$. To associate tangent vectors in different tangent spaces, we use parallel transport, which is a way to transport vectors from one tangent space to another.

\begin{figure}[tp]
	
	\begin{center}
		\centerline{\includegraphics[width=\linewidth]{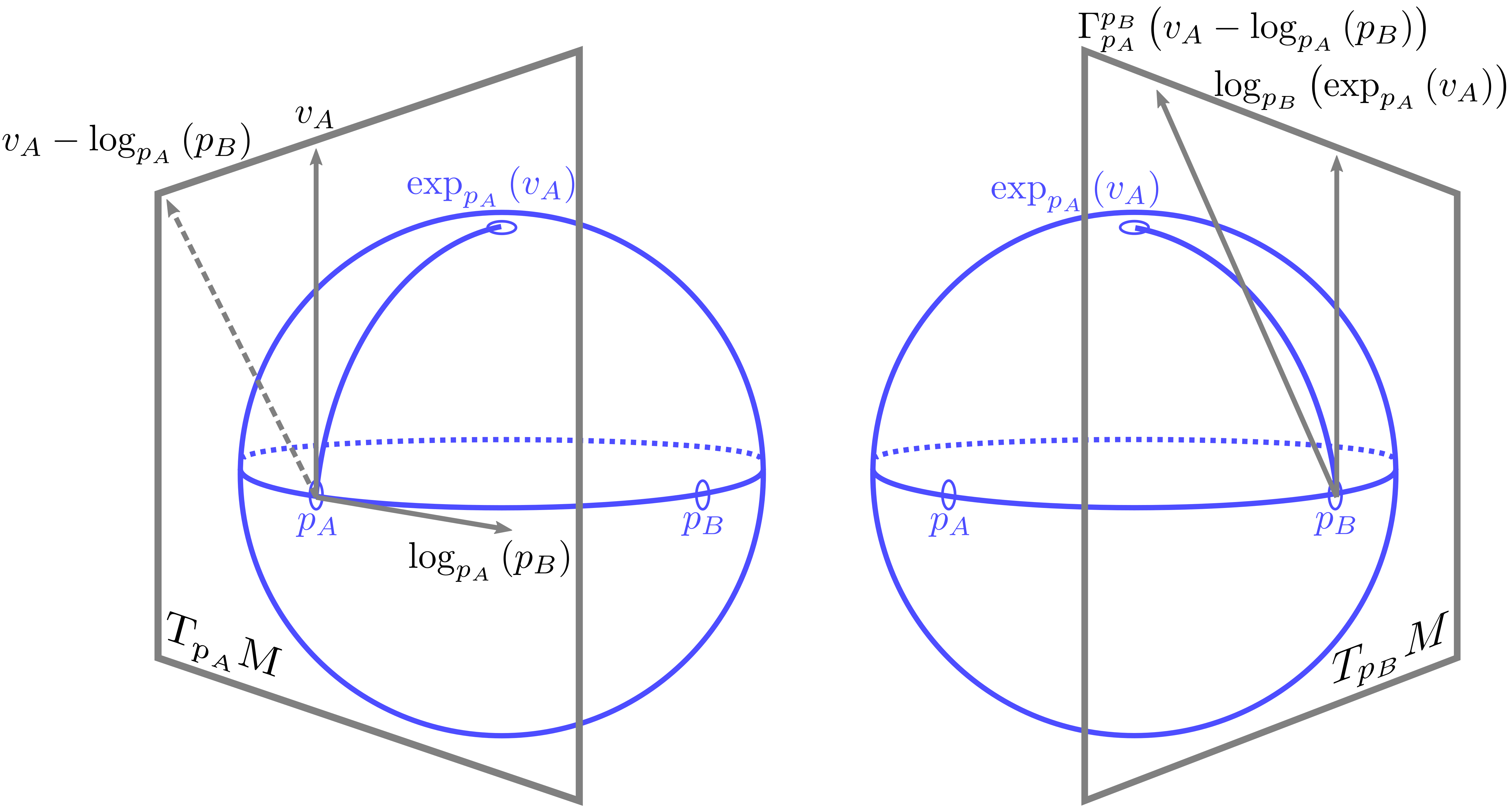}}
		\caption{Illustration of the maps $v_A\mapsto\Gamma_{p_{A}}^{p_{B}}\left(v_A-\log_{p_{A}}\left(p_{B}\right)\right)$ and $v_A\mapsto\log_{p_B}\left(\exp_{p_A}\left(v_A\right)\right)$.}
		\label{fig:picture}
	\end{center}
	\vskip -0.2in
\end{figure}

Given $z_k\in M$ in \eqref{eq:ahn_scheme}, we define the iterates $v_{k}=\log_{y_{k}}\left(z_{k}\right)$, $\bar{v}_{k}=\log_{x_{k}}\left(z_{k}\right)$, and $\bar{\bar{v}}_{k}=\log_{y_{k-1}}\left(z_{k}\right)$ in the tangent bundle $TM$. It is straightforward to check that the following scheme is equivalent to \eqref{eq:ahn_scheme}:
\begin{equation}
	\label{eq:ahn_scheme2}
	\begin{aligned}
		y_{k} & =\exp_{x_{k}}\left(\tau_k\bar{v}_{k}\right)\\
		x_{k+1} & =\exp_{y_{k}}\left(-\alpha_k\grad f\left(y_{k}\right)\right)\\
		v_{k} & =\log_{y_{k}}\left(\exp_{x_{k}}\left(\bar{v}_{k}\right)\right)\\
		\bar{\bar{v}}_{k+1} & =\beta_kv_{k}-\gamma_k\grad f\left(y_{k}\right)\\
		\bar{v}_{k+1} & =\log_{x_{k+1}}\left(\exp_{y_{k}}\left(\bar{\bar{v}}_{k+1}\right)\right).
	\end{aligned}
\end{equation}
In \eqref{eq:ahn_scheme2}, the third and last steps associate tangent vectors in different tangent spaces using the map $T_{p_A}M\rightarrow T_{p_B}M;\ v_A\mapsto\log_{p_B}\left(\exp_{p_A}\left(v_A\right)\right)$. We change these steps by using the map $v_A\mapsto\Gamma_{p_{A}}^{p_{B}}\left(v_A-\log_{p_{A}}\left(p_{B}\right)\right)$ instead. Technically, this modification allows us to use \cref{lem:distortion2} when handling metric distortion in our convergence analysis. With the change, we obtain the following scheme, which we call RNAG:
\begin{equation}
	\label{eq:rnag}
	\tag{RNAG}
	\begin{aligned}
		y_{k} & =\exp_{x_{k}}\left(\tau_k\bar{v}_{k}\right)\\
		x_{k+1} & =\exp_{y_{k}}\left(-\alpha_k\grad f\left(y_{k}\right)\right)\\
		v_{k} & =\Gamma_{x_{k}}^{y_{k}}\left(\bar{v}_{k}-\log_{x_{k}}\left(y_{k}\right)\right)\\
		\bar{\bar{v}}_{k+1} & =\beta_kv_{k}-\gamma_k\grad f\left(y_{k}\right)\\
		\bar{v}_{k+1} & =\Gamma_{y_{k}}^{x_{k+1}}\left(\bar{\bar{v}}_{k+1}-\log_{y_{k}}\left(x_{k+1}\right)\right).
	\end{aligned}
\end{equation}
Because \ref{eq:rnag} only involves exponential maps, logarithm maps, parallel transport, and operations in tangent spaces, this scheme is computationally tractable, unlike the scheme in \citep{liu2017accelerated}, which involves a nonlinear operator.
Note that \ref{eq:rnag} is different from the scheme \eqref{eq:ahn_scheme} because the maps $v_A\mapsto\log_{p_B}\left(\exp_{p_A}\left(v_A\right)\right)$ and $v_A\mapsto\Gamma_{p_{A}}^{p_{B}}\left(v_A-\log_{p_{A}}\left(p_{B}\right)\right)$ are not equivalent in general (see Figure~\ref{fig:picture}).

\begin{algorithm}[tp]
	\caption{RNAG-C}
	\label{alg:rnag-c}
	\begin{algorithmic}
		\STATE {\bfseries Input:} initial point $x_0$, parameters $\xi$ and $T>0$, step size $s\leq\frac{1}{L}$
		\STATE Initialize $\bar{v}_{0}=0\in T_{x_{0}}M$.
		\STATE Set $\lambda_k=\frac{k+2\xi+T}{2}$.
		\FOR{$k=0$ {\bfseries to} $K-1$}
		\STATE $y_{k}=\exp_{x_{k}}\left(\frac{\xi}{\lambda_{k}+(\xi-1)}\bar{v}_{k}\right)$
		\STATE $x_{k+1}=\exp_{y_{k}}\left(-s\grad f\left(y_{k}\right)\right)$
		\STATE $v_{k}=\Gamma_{x_{k}}^{y_{k}}\left(\bar{v}_{k}-\log_{x_{k}}\left(y_{k}\right)\right)$
		\STATE $\bar{\bar{v}}_{k+1}=v_{k}-\frac{s\lambda_k}{\xi}\grad f\left(y_{k}\right)$
		\STATE $\bar{v}_{k+1}=\Gamma_{y_{k}}^{x_{k+1}}\left(\bar{\bar{v}}_{k+1}-\log_{y_{k}}\left(x_{k+1}\right)\right)$
		\ENDFOR
		\STATE {\bfseries Output:} $x_K$
	\end{algorithmic}
\end{algorithm}

\begin{algorithm}[tp]
	\caption{RNAG-SC}
	\label{alg:rnag-sc}
	\begin{algorithmic}
		\STATE {\bfseries Input:} initial point $x_0$, parameter $\xi$, step size $s\leq\frac{1}{L}$
		\STATE Initialize $\bar{v}_{0}=0\in T_{x_{0}}M$.
		\STATE Set $q=\mu s$.
		\FOR{$k=0$ {\bfseries to} $K-1$}
		\STATE $y_{k}=\exp_{x_{k}}\left(\frac{\sqrt{\xi q}}{1+\sqrt{\xi q}}\bar{v}_{k}\right)$
		\STATE $x_{k+1}=\exp_{y_{k}}\left(-s\grad f\left(y_{k}\right)\right)$
		\STATE $v_{k}=\Gamma_{x_{k}}^{y_{k}}\left(\bar{v}_{k}-\log_{x_{k}}\left(y_{k}\right)\right)$
		\STATE $\bar{\bar{v}}_{k+1}=\left(1-\sqrt{\frac{q}{\xi}}\right)v_{k}+\sqrt{\frac{q}{\xi}}\left(-\frac{1}{\mu}\grad f\left(y_{k}\right)\right)$
		\STATE $\bar{v}_{k+1}=\Gamma_{y_{k}}^{x_{k+1}}\left(\bar{\bar{v}}_{k+1}-\log_{y_{k}}\left(x_{k+1}\right)\right)$
		\ENDFOR
		\STATE {\bfseries Output:} $x_K$
	\end{algorithmic}
\end{algorithm}

\begin{figure}[tp]
	
	\begin{center}
		\centerline{\includegraphics[width=0.9\columnwidth]{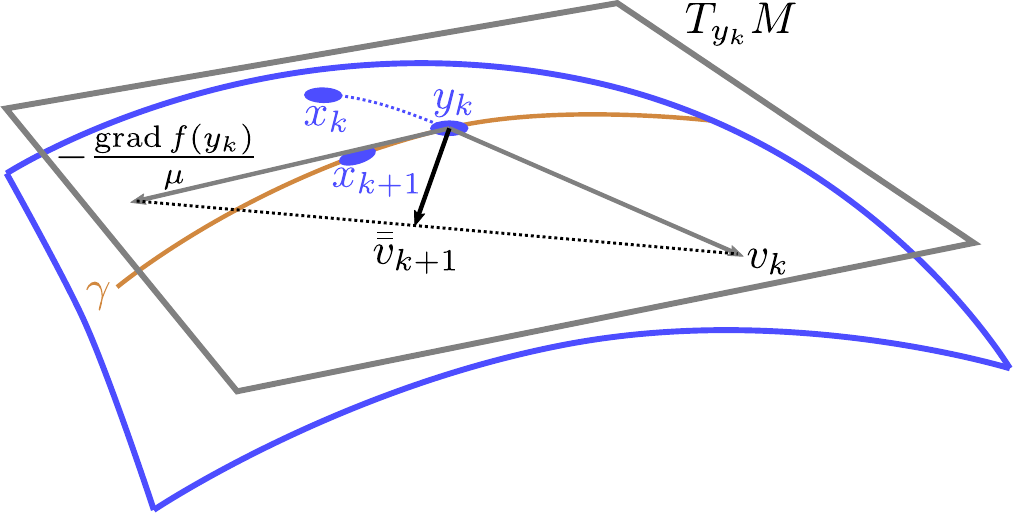}}
		\caption{Illustration of RNAG-SC.}
		\label{fig:algorithm}
	\end{center}
	\vskip -0.2in
\end{figure}

By carefully choosing the parameters $\tau_k$, $\alpha_k$, $\beta_k$ and $\gamma_k$, we finally obtain two algorithms, RNAG-C (Algorithm~\ref{alg:rnag-c}) for the g-convex case, and RNAG-SC (Algorithm~\ref{alg:rnag-sc}) for the g-strongly convex case. In particular, we can interpret RNAG-C as a slight variation of NAG-C with high friction \citep[Section~4.1]{su2014} with the friction parameter $r=1+2\xi$. See \cref{app:su} for a detailed interpretation. Note that we recover NAG-C and NAG-SC 
from these algorithms
when $M=\mathbb{R}^n$ and $\xi=1$.
Figure~\ref{fig:algorithm} is an illustration of some steps of RNAG-SC, where the curve $\gamma$ is a geodesic with $\gamma(0)=y_k$ and $\gamma'(0)=\grad f\left(y_k\right)$.

\section{Convergence Analysis}

\subsection{Metric distortion lemma}

To handle a potential function involving squared norms in tangent spaces, we need to compare distances in different tangent spaces.

\begin{proposition}
	\label{prop:alimisis}
	\citep[Lemma 2]{alimisis2020continuous}
	Let $\gamma$ be a smooth curve whose image is in $N$. Then, we have
	\[
	\delta\left\Vert \gamma'(t)\right\Vert ^{2}\leq\left\langle D_{t}\log_{\gamma(t)}(x),-\gamma'(t)\right\rangle \leq\zeta\left\Vert \gamma'(t)\right\Vert ^{2}.
	\]
\end{proposition}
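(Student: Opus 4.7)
My plan is to recognize $D_t\log_{\gamma(t)}(x)$ as the covariant derivative at $s=0$ of a Jacobi field whose far endpoint is pinned, and then invoke Rauch-type (equivalently, index-form) comparison with constant-curvature model spaces to extract $\delta$ and $\zeta$.

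First I would introduce the two-parameter variation $\Sigma(s,t):=\exp_{\gamma(t)}(s\log_{\gamma(t)}(x))$, so that for each fixed $t$ the curve $\sigma_t(s):=\Sigma(s,t)$ is the geodesic from $\gamma(t)$ to $x$ with $\sigma_t'(0)=\log_{\gamma(t)}(x)$ and constant speed $\ell_t:=d(\gamma(t),x)\le D$. The transverse field $J(s):=\partial_t\Sigma(s,t)$ is a Jacobi field along $\sigma_t$ with boundary data $J(0)=\gamma'(t)$ and $J(1)=0$ (the latter because $\Sigma(1,\cdot)\equiv x$). The symmetry of mixed covariant derivatives $D_t\partial_s\Sigma=D_s\partial_t\Sigma$ then identifies $D_t\log_{\gamma(t)}(x)=J'(0)$, so the quantity to bound is $-\langle J'(0),J(0)\rangle$.

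Next I would split $J=J^\parallel+J^\perp$ along and perpendicular to $\sigma_t'$. Since the splitting is preserved by parallel transport and by the Jacobi equation, the tangential part obeys $(J^\parallel)''=0$, and the boundary data force $J^\parallel(s)=(1-s)J^\parallel(0)$, yielding the exact identity $-\langle(J^\parallel)'(0),J^\parallel(0)\rangle=|J^\parallel(0)|^2$. For the normal part I would apply a Sturm-type scalar comparison to the function $g(s):=|J^\perp(s)|$: in a manifold of constant curvature $K$, the comparison scalar solves $u''+K\ell_t^2 u=0$ with $u(0)=|J^\perp(0)|$ and $u(1)=0$, admitting explicit sinh/sin quotient solutions whose value of $-u'(0)u(0)$ equals $|J^\perp(0)|^2$ times $\sqrt{-K}\,\ell_t\coth(\sqrt{-K}\,\ell_t)$ or $\sqrt{K}\,\ell_t\cot(\sqrt{K}\,\ell_t)$. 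Comparison with $K\in\{K_{\min},K_{\max}\}$ then sandwiches $-\langle(J^\perp)'(0),J^\perp(0)\rangle$ between these two expressions, and monotonicity of $u\mapsto u\coth u$ and of $u\mapsto u\cot u$ on $(0,\pi)$, combined with $\ell_t\le D<\pi/\sqrt{K_{\max}}$, lets me replace $\ell_t$ by $D$, producing the constants $\zeta$ and $\delta$. Summing the tangential and normal contributions, and using $\zeta\ge 1\ge\delta$ on the tangential piece, gives the claimed two-sided bound.

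The main technical obstacle is the comparison step itself, since the standard Rauch-I statement concerns Jacobi fields vanishing at $s=0$ rather than at $s=1$. I would circumvent this by working with the index form $I(V,V)=\int_0^1\bigl(|V'|^2-\langle R(V,\sigma_t')\sigma_t',V\rangle\bigr)\,ds$: a standard integration-by-parts calculation gives $I(J,J)=-\langle J'(0),J(0)\rangle$ for the Jacobi field with $J(1)=0$, and $J^\perp$ minimizes $I$ among normal fields matching its boundary data. Inserting a suitably rescaled model-space Jacobi field as a test vector then produces the upper bound in the ambient manifold, while swapping the roles of the ambient and model manifolds yields the lower bound. A secondary point is verifying that the hypothesis $D<\pi/\sqrt{K_{\max}}$ precisely excludes conjugate points along $\sigma_t$, so that $J$ is uniquely determined and $\cot(\sqrt{K_{\max}}\,\ell_t)$ remains finite.
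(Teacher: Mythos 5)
The paper does not actually prove this proposition---it is stated with the citation \cite[Lemma~2]{alimisis2020continuous} and used as a black box, so there is no paper-internal proof to compare against. That said, your argument is a correct and essentially self-contained derivation of the cited Hessian comparison for $\tfrac{1}{2}d(\cdot,x)^2$, and it follows the standard route that a reference like Alimisis et al.\ (or Lee/Sakai/Cheeger--Ebin) would take. The key identifications are all right: $D_t\log_{\gamma(t)}(x)=J'(0)$ by torsion-freeness applied to the variation $\Sigma(s,t)=\exp_{\gamma(t)}(s\log_{\gamma(t)}(x))$, the tangential/normal splitting with the exact tangential contribution $|J^\parallel(0)|^2$, and the fact that $\delta\le 1\le\zeta$ lets you absorb the tangential piece into the two-sided bound. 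You also correctly flagged the genuine technical subtlety: the Jacobi field here vanishes at $s=1$, not $s=0$, so Rauch I is not directly applicable, and the clean resolution is via the index form $I(J,J)=-\langle J'(0),J(0)\rangle$. Two small points worth making explicit if you write this out in full: (i) the minimality of the index form at $J^\perp$ among fields with matching boundary data, and the nonvanishing of $J^\perp$ on $[0,1)$, both rely on the no-conjugate-point hypothesis you mention, which is exactly what $D<\pi/\sqrt{K_{\max}}$ in Assumption~1 secures; and (ii) for the lower bound, the ``swap the roles'' device amounts to using $|J^\perp(s)|\bar E(s)$ as a test field in the constant-curvature-$K_{\max}$ model, where Cauchy--Schwarz gives $(|J^\perp|')^2\le|J^{\perp\prime}|^2$ and hence $I_{\bar M}(\bar J,\bar J)\le I_{\bar M}(|J^\perp|\bar E,|J^\perp|\bar E)\le I_M(J^\perp,J^\perp)$. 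With those details filled in, your proof is complete. It is more than the paper provides, since the paper merely cites the result.
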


In the proposition above, $D_t$ is a covariant derivative along the curve (see \cref{app:background}). Using this proposition, we obtain the following lemma.

\begin{restatable}{lemma}{distortiona}
	\label{lem:distortion1}
	Let $p_{A},p_{B},x\in N$ and $v_A\in T_{p_A}M$. If there is $r\in[0,1]$ such that $\log_{p_A}\left(p_B\right)=rv_A$, then we have
	\begin{align*}
		& \left\Vert v_{B}-\log_{p_{B}}\left(x\right)\right\Vert_{p_B} ^{2}+(\zeta-1)\left\Vert v_{B}\right\Vert_{p_B} ^{2}\\
		& \leq\left\Vert v_{A}-\log_{p_{A}}\left(x\right)\right\Vert_{p_A} ^{2}+(\zeta-1)\left\Vert v_{A}\right\Vert_{p_A} ^{2},
	\end{align*}
	where $v_B=\Gamma_{p_{A}}^{p_{B}}\left(v_{A}-\log_{p_{A}}\left(p_{B}\right)\right)\in T_{p_B}M$
\end{restatable}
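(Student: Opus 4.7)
The plan is to parametrize both sides of the inequality along the geodesic from $p_A$ to $p_B$ and show that a suitable potential function is monotonically nonincreasing along it. Since $\log_{p_A}(p_B) = r v_A$ with $r \in [0,1]$, define the geodesic $\gamma:[0,1]\to M$ by $\gamma(t) = \exp_{p_A}(t r v_A)$, so $\gamma(0)=p_A$, $\gamma(1)=p_B$, and $\gamma'(t) = r\,\Gamma_{p_A}^{\gamma(t)}(v_A)$. Along $\gamma$, I will carry the tangent vector $v(t) := (1-tr)\,\Gamma_{p_A}^{\gamma(t)}(v_A) \in T_{\gamma(t)}M$, which interpolates $v(0)=v_A$ and $v(1)=(1-r)\Gamma_{p_A}^{p_B}(v_A) = \Gamma_{p_A}^{p_B}(v_A - \log_{p_A}(p_B)) = v_B$. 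Set $u(t) := \log_{\gamma(t)}(x)$ and define the potential
\[
\phi(t) := \bigl\| v(t) - u(t)\bigr\|_{\gamma(t)}^2 + (\zeta-1)\bigl\| v(t)\bigr\|_{\gamma(t)}^2,
\]
so that $\phi(0)$ is the RHS of the claim and $\phi(1)$ is the LHS; the goal reduces to showing $\phi'(t) \le 0$ on $[0,1]$.

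Next I will expand $\phi$ in the form $\zeta\|v\|^2 - 2\langle v,u\rangle + \|u\|^2$ and differentiate using metric compatibility. Parallel transport gives $D_t v(t) = -r\,\Gamma_{p_A}^{\gamma(t)}(v_A) = -\gamma'(t)$, and writing $\alpha(t) := (1-tr)/r \ge 0$ we have $v(t) = \alpha(t)\gamma'(t)$. The cross terms involving $u$ and $D_t u$ can be simplified via the Gauss-lemma identity $\frac{d}{dt}\|\log_{\gamma(t)}(x)\|^2 = -2\langle\gamma'(t), \log_{\gamma(t)}(x)\rangle$, which yields $\langle D_t u, u\rangle = -\langle \gamma', u\rangle$; this makes the terms $-2\langle D_t v, u\rangle + 2\langle D_t u, u\rangle = 2\langle \gamma', u\rangle - 2\langle \gamma', u\rangle$ cancel.

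After this cancellation the derivative collapses to
\[
\phi'(t) = -2\alpha(t)\Bigl(\zeta\|\gamma'(t)\|^2 + \langle \gamma'(t), D_t u(t)\rangle\Bigr).
\]
Proposition~\ref{prop:alimisis} applied to the curve $\gamma$ and the point $x$ gives $\langle D_t u(t), -\gamma'(t)\rangle \le \zeta\|\gamma'(t)\|^2$, i.e.\ $\zeta\|\gamma'\|^2 + \langle\gamma', D_t u\rangle \ge 0$. Combined with $\alpha(t) \ge 0$, this yields $\phi'(t) \le 0$, so $\phi(1) \le \phi(0)$, which is exactly the desired inequality. (If $r=0$ then $p_B = p_A$ and the statement is a tautology, so we may assume $r>0$ to ensure $\alpha$ is well-defined.)

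The main obstacle I anticipate is choosing the correct interpolating field $v(t)$: the specific scaling $(1-tr)\Gamma_{p_A}^{\gamma(t)}(v_A)$ is what forces $D_t v = -\gamma'$, thereby making the $\langle u, \cdot\rangle$ terms telescope via Gauss's lemma and leaving exactly the combination controlled by Proposition~\ref{prop:alimisis}. The role of the extra $(\zeta-1)\|v\|^2$ term in the potential is precisely to absorb the curvature-induced gap in the upper bound of that proposition; without it, only the Euclidean-case equality would survive.
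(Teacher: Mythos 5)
Your proof is correct and follows essentially the same route as the paper's: both construct a geodesic joining $p_A$ to $p_B$, carry an interpolating parallel field $v(t)$ whose covariant derivative equals $-\gamma'$, and use \cref{prop:alimisis} to close a differential inequality. The only cosmetic differences are that you parametrize the geodesic over $[0,1]$ rather than $[0,r]$, and you bundle both terms into a single Lyapunov function $\phi(t)$ and show $\phi'(t)\le 0$, whereas the paper works with $w(t)=\|\log_{\gamma(t)}(x)-V(t)\|^2$ alone and shows $\frac{d}{dt}w(t)\le -(\zeta-1)\frac{d}{dt}\|V(t)\|^2$ before integrating; these two formulations are algebraically identical.
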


In particular, when $r=1$, \cref{lem:distortion1} recovers a weaker version of \citep[Lemma~5]{zhang2016first}. We can further generalize this lemma as follows:

\begin{restatable}{lemma}{distortionb}
	\label{lem:distortion2}
	Let $p_{A},p_{B},x\in N$ and $v_{A}\in T_{p_{A}}M$. Define $v_{B}=\Gamma_{p_{A}}^{p_{B}}\left(v_{A}-\log_{p_{A}}\left(p_{B}\right)\right)\in T_{p_{B}}M$.
	If there are $a,b\in T_{p_{A}}M$, and $r\in(0,1)$ such that $v_{A}=a+b$
	and $\log_{p_{A}}\left(p_{B}\right)=rb$, then we have
	\begin{align*}
		& \left\Vert v_{B}-\log_{p_{B}}\left(x\right)\right\Vert_{p_B} ^{2}+(\xi-1)\left\Vert v_{B}\right\Vert_{p_B} ^{2}\\
		& \leq\left\Vert v_{A}-\log_{p_{A}}\left(x\right)\right\Vert_{p_A} ^{2}+(\xi-1)\left\Vert v_{A}\right\Vert_{p_A} ^{2}\\
		& \quad+\frac{\xi-\delta}{2}\left(\frac{1}{1-r}-1\right)\left\Vert a\right\Vert_{p_A} ^{2}
	\end{align*}
	for $\xi\geq\zeta$.
\end{restatable}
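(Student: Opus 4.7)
The plan is to set up a one-parameter interpolation between $v_A$ and $v_B$ along the geodesic from $p_A$ to $p_B$, and to track a suitable potential function along this interpolation. Let $\gamma\colon[0,1]\to M$ be the constant-speed geodesic with $\gamma(0)=p_A$ and $\gamma(1)=p_B$, so that $\gamma'(0)=\log_{p_A}(p_B)=rb$. Writing $\tilde a(t)=\Gamma_{p_A}^{\gamma(t)}(a)$ and $\tilde b(t)=\Gamma_{p_A}^{\gamma(t)}(b)$ for the parallel transports, so that $\gamma'(t)=r\tilde b(t)$ and $D_t\tilde a=D_t\tilde b=0$, the natural interpolant is
\[
w(t)=\tilde a(t)+(1-tr)\tilde b(t)\in T_{\gamma(t)}M,
\]
which satisfies $w(0)=v_A$, $w(1)=v_B$, and $D_tw(t)=-\gamma'(t)$. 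The potential
\[
F(t)=\|w(t)-\log_{\gamma(t)}(x)\|^2+(\xi-1)\|w(t)\|^2
\]
interpolates the two sides of the claimed inequality, so the goal reduces to showing $F(1)-F(0)\le\tfrac{\xi-\delta}{2}\bigl(\tfrac{1}{1-r}-1\bigr)\|a\|^2$.

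A direct calculation using $D_tw=-\gamma'$ together with the first variation identity $\langle D_t\log_{\gamma(t)}(x),\log_{\gamma(t)}(x)\rangle=-\langle\gamma'(t),\log_{\gamma(t)}(x)\rangle$ (obtained from $\tfrac{d}{dt}\tfrac12\|\log_{\gamma(t)}(x)\|^2$) collapses $F'(t)$ to
\[
F'(t)=-2\xi\langle\gamma'(t),w(t)\rangle-2\langle D_t\log_{\gamma(t)}(x),w(t)\rangle.
\]
Substituting $w=\tilde a+(1-tr)\tilde b$ and $\gamma'=r\tilde b$ splits $F'(t)$ into a $\tilde b$-contribution and a $\tilde a$-contribution. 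For the $\tilde b$-contribution, the upper bound in \cref{prop:alimisis} applied to $\gamma$, combined with the hypothesis $\xi\ge\zeta$, produces a nonpositive remainder $-2r(1-tr)(\xi-\zeta)\|b\|^2$. For the $\tilde a$-contribution, the symmetry of the Hessian of $\tfrac12 d(\cdot,x)^2$ yields the identity $\langle D_t\log_{\gamma(t)}(x),\tilde a(t)\rangle=\langle \partial_{\tilde a(t)}\log_{\gamma(t)}(x),\gamma'(t)\rangle$, and \cref{prop:alimisis} applied to the geodesic $s\mapsto \exp_{\gamma(t)}(s\tilde a(t))$ identifies the operator $B_pv:=-\partial_v\log_p(x)$ as symmetric positive-definite with spectrum in $[\delta,\zeta]$. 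Writing $B=\delta I+S$ with $0\preceq S\preceq(\zeta-\delta)I$ separates a linear piece $2r\delta\langle a,b\rangle$ from an $S$-residual.

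The main technical obstacle is to balance this $S$-residual and the remaining $\langle a,b\rangle$ terms against the $\|b\|^2$ slack through a $t$-dependent Young's inequality, so that in the pointwise bound for $F'(t)$ only a pure $\|a\|^2$-term with the sharp density $\tfrac{(\xi-\delta)r}{2(1-tr)^2}$ survives. Integrating this bound over $[0,1]$ and using
\[
\int_0^1\frac{r\,dt}{(1-tr)^2}=\frac{1}{1-r}-1
\]
then delivers the desired estimate; specializing to $a=0$ recovers \cref{lem:distortion1} as a consistency check.
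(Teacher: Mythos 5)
Your overall framework---parametrize the geodesic from $p_A$ to $p_B$, follow the parallel-transport interpolant $w(t)=\tilde a(t)+(1-tr)\tilde b(t)$, track $F(t)=\Vert w(t)-\log_{\gamma(t)}(x)\Vert^2+(\xi-1)\Vert w(t)\Vert^2$, collapse $F'(t)$ to $-2\xi\langle\gamma',w\rangle-2\langle D_t\log_{\gamma(t)}(x),w\rangle$ using \cref{prop:alimisis2}, and integrate a pointwise bound---is precisely the paper's strategy (the paper parametrizes over $[0,r]$ and first tracks only the scalar $\Vert\log_{\gamma(t)}(x)-V(t)\Vert^2$, adding the kinetic term afterwards; that difference is cosmetic). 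The gap is in how you bound $F'(t)$.

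With $\Hess f_0$ the Hessian of $p\mapsto\tfrac12 d(p,x)^2$, so that $D_t\log_{\gamma(t)}(x)=-\Hess f_0[\gamma']$, the collapsed derivative is \emph{exactly}
\[
F'(t)= -2r\bigl\langle C\tilde b,\tilde a\bigr\rangle
        -2r(1-tr)\bigl\langle C\tilde b,\tilde b\bigr\rangle ,
\qquad C:=\xi I-\Hess f_0 ,
\]
and the hypothesis $\xi\geq\zeta$ is what makes $C\succeq 0$. Your plan replaces the $\tilde b$-contribution $-2r(1-tr)\langle C\tilde b,\tilde b\rangle$ by the weaker $-2r(1-tr)(\xi-\zeta)\Vert b\Vert^2$ (applying the upper bound of \cref{prop:alimisis} too early), and then splits the $\tilde a$-contribution as $-2r(\xi-\delta)\langle a,b\rangle+2r\langle S\tilde b,\tilde a\rangle$ via $\Hess f_0=\delta I+S$, hoping to balance those two pieces against the retained $\Vert b\Vert^2$ slack by a $t$-dependent Young's inequality. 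That cannot work: if $\xi=\zeta$ (allowed), the slack vanishes identically, but the two pieces you wish to absorb still total $-2r\langle C\tilde b,\tilde a\rangle$, whose magnitude is of order $r(\xi-\delta)\Vert a\Vert\,\Vert b\Vert$ in the worst case and so is unbounded in $\Vert b\Vert$ for fixed $\Vert a\Vert$; no choice of $t$-dependent weight applied to the pieces \emph{separately} can dominate this by a pure $\Vert a\Vert^2$ term. What you call ``the main technical obstacle'' is in fact an obstruction to your decomposition, not a computation to grind out.

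The repair, if you want to keep your $\tilde a$/$\tilde b$ split, is to retain the exact $\tilde b$-contribution and not sever $S$ from $(\xi-\delta)I$: keep $C$ as a single positive-semidefinite operator and complete the square,
\[
-2\langle C\tilde b,\tilde a\rangle-2(1-tr)\langle C\tilde b,\tilde b\rangle
= -2(1-tr)\Bigl\Vert C^{1/2}\tilde b+\tfrac{1}{2(1-tr)}C^{1/2}\tilde a\Bigr\Vert^2
  +\tfrac{1}{2(1-tr)}\langle C\tilde a,\tilde a\rangle
\leq \tfrac{\xi-\delta}{2(1-tr)}\Vert a\Vert^2 ,
\]
so $F'(t)\leq\frac{(\xi-\delta)r}{2(1-tr)}\Vert a\Vert^2$. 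Note this density is tighter than the $\frac{(\xi-\delta)r}{2(1-tr)^2}$ you quoted; one then bounds $\frac{1}{1-tr}\leq\frac{1}{1-r}$ and integrates, exactly as the paper does. The paper's own argument is an equivalent repackaging: it centers the Hessian at $\frac{\xi+\delta}{2}I$ (rather than at $\delta I$ or $\xi I$), applies Cauchy--Schwarz to the residual acting on $\gamma'$ paired with the \emph{whole} vector $w$ rather than with $\tilde a$ and $\tilde b$ separately, and closes with the symmetric Young's inequality $2\Vert(1-t)b\Vert\,\Vert a+(1-t)b\Vert\leq\Vert(1-t)b\Vert^2+\Vert a+(1-t)b\Vert^2$---this pairing with $w$ is exactly where the cross-term cancellation your sketch drops actually lives.
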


As $\exp_{p_A}\left(v_A\right)\neq\exp_{p_B}\left(v_B\right)$ (see \cref{fig:picture}), our lemma does not compare the projected distance\footnote{For $u,v,w\in M$, the projected distance between $v$ and $w$ with respect to $u$ is defined as $\left\Vert \log_u(v)-\log_u(w)\right\Vert ^{\color{red}\raisebox{1ex}{\underline{\smash{\raisebox{-1ex}{\scriptsize2}}}}}$ \citep[Definition~3.1]{ahn2020from}.} between points on the manifold, unlike \citep[Theorem~10]{zhang2018estimate} and \citep[Lemma~4.1]{ahn2020from}. The proofs of \cref{lem:distortion1} and \cref{lem:distortion2} can be found in Appendix~\ref{app:distortion}.

\subsection{Main results}

We now prove the iteration complexities of RNAG-C and RNAG-SC using potential functions of the form
\begin{equation}
	\label{eq:potential}
	\begin{aligned}
		\phi_{k} & =A_{k}\left(f\left(x_{k}\right)-f\left(x^{*}\right)\right)\\
		& \quad+B_{k}\left(\left\Vert \bar{v}_{k}-\log_{x_{k}}\left(x^{*}\right)\right\Vert _{x_{k}}^{2}+(\xi-1)\left\Vert \bar{v}_{k}\right\Vert _{x_{k}}^{2}\right).
	\end{aligned}
\end{equation}
The term $(\xi-1)\left\Vert \bar{v}_{k}\right\Vert_{x_k} ^{2}$  is novel compared with the potential function in \citep{ahn2020from}, and it measures the kinetic energy~\citep{wibisono2016}. 
Intuitively, this potential makes sense because a large $\xi$ means high friction (see \cref{app:su} and \cref{sec:continuous}). This term is useful when handling metric distortion.

\subsubsection{The geodesically convex case}

For the g-convex case, we use a potential function defined as
\begin{equation}
	\label{eq:potential_rnag-c}
	\begin{aligned}
		\phi_{k} & =s\lambda_{k-1}^2\left(f\left(x_{k}\right)-f\left(x^{*}\right)\right)\\
		& \quad+\frac{\xi}{2}\left\Vert \bar{v}_{k}-\log_{x_{k}}\left(x^{*}\right)\right\Vert ^{2}+\frac{\xi(\xi-1)}{2}\left\Vert \bar{v}_{k}\right\Vert ^{2}.
	\end{aligned}
\end{equation}
The following theorem shows that this potential function is decreasing when the parameters $\xi$ and $T$ are chosen appropriately.

\begin{restatable}{theorem}{rnagcthm}
	\label{thm:rnag-c}
	Let $f$ be a g-convex and geodesically $L$-smooth function. If the parameters $\xi$ and $T$ of RNAG-C satisfy $\xi\geq\zeta$ and
	\begin{align*}
		& \frac{\xi-\delta}{2}\left(\frac{1}{1-\xi/\lambda_{k}}-1\right)\\
		& \leq(\xi-\zeta)\left(\frac{1}{\left(1-\xi/\left(\lambda_{k}+\xi-1\right)\right)^{2}}-1\right)
	\end{align*}
	for all $k\geq 0$, then the iterates of RNAG-C satisfy $\phi_{k+1}\leq \phi_k$ for all $k\geq0$, where $\phi_k$ is defined as \eqref{eq:potential_rnag-c}.
\end{restatable}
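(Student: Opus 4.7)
The plan is to prove $\phi_{k+1}\le\phi_k$ via the two-step strategy advertised in the paper: a ``Euclidean-like'' computation performed entirely inside $T_{y_k}M$, together with two applications of the metric distortion lemmas used to convert everything into the $T_{x_k}M$/$T_{x_{k+1}}M$ form demanded by the potential. The crucial point will be arranging these distortion applications so that the error produced by one is absorbed by a ``free bonus'' produced by the other; this is exactly where the stated condition on $\xi$ and $T$ enters.

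For the tangent-space step, I would first record a geometric identity: from $y_k=\exp_{x_k}\!\bigl(\tfrac{\xi}{\lambda_k+\xi-1}\bar v_k\bigr)$, the definition of $v_k$ via parallel transport, and the standard fact $\Gamma_{x_k}^{y_k}(\log_{x_k}(y_k))=-\log_{y_k}(x_k)$ along the connecting geodesic, one obtains
\[
\log_{y_k}(x_k)=-\tfrac{\xi}{\lambda_k-1}\,v_k.
\]
Using $\bar{\bar v}_{k+1}=v_k-\tfrac{s\lambda_k}{\xi}\grad f(y_k)$ and $\log_{y_k}(x_{k+1})=-s\grad f(y_k)$, I would then expand $\|\bar{\bar v}_{k+1}-\log_{y_k}(x^*)\|^2-\|v_k-\log_{y_k}(x^*)\|^2$ and $\|\bar{\bar v}_{k+1}\|^2-\|v_k\|^2$, and bound the inner products with $\grad f(y_k)$ using (i) g-convexity at $y_k$ applied to $x^*$; (ii) g-convexity at $y_k$ applied to $x_k$, where the identity above converts $\langle\grad f(y_k),v_k\rangle$ into a multiple of $\langle\grad f(y_k),\log_{y_k}(x_k)\rangle$; and (iii) geodesic $L$-smoothness at $y_k$ applied to $x_{k+1}$, which with $s\le 1/L$ gives $\tfrac{s}{2}\|\grad f(y_k)\|^2\le f(y_k)-f(x_{k+1})$.

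For the distortion step, I would apply \cref{lem:distortion1} to the transfer $x_k\to y_k$ with $r=\tfrac{\xi}{\lambda_k+\xi-1}$: the resulting $(\zeta-1)$-weighted inequality, rewritten in the $(\xi-1)$ form required by the potential using the isometric shrinkage $\|v_k\|^2=(1-r)^2\|\bar v_k\|^2$, leaves a nonnegative ``bonus'' $(\xi-\zeta)\bigl(\tfrac{1}{(1-r)^2}-1\bigr)\|v_k\|^2$ on the left-hand side. Next I would apply \cref{lem:distortion2} to the transfer $y_k\to x_{k+1}$ with $a=v_k$, $b=-\tfrac{s\lambda_k}{\xi}\grad f(y_k)$, and $r'=\xi/\lambda_k\in(0,1)$, producing an ``error'' $\tfrac{\xi-\delta}{2}\bigl(\tfrac{1}{1-\xi/\lambda_k}-1\bigr)\|v_k\|^2$ on the right-hand side. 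The stated parameter condition is precisely the inequality that lets the bonus absorb the error.

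Combining the three ingredients into $\phi_{k+1}-\phi_k$, the coefficients of $f(y_k)$ and $f(x_{k+1})$ should cancel, leaving
\[
\phi_{k+1}-\phi_k\;\le\;s\bigl(\lambda_k^2-\lambda_{k-1}^2-\lambda_k\bigr)\bigl(f(x_k)-f(x^*)\bigr).
\]
Since $\lambda_k-\lambda_{k-1}=\tfrac12$, a quick calculation gives $\lambda_k^2-\lambda_{k-1}^2-\lambda_k=-\tfrac14$, whence $\phi_{k+1}\le\phi_k$. The main obstacle I anticipate is the distortion bookkeeping: applying \cref{lem:distortion1} (rather than \cref{lem:distortion2} with $a=0$) at the first transfer is what exposes the $(\xi-\zeta)$ bonus, and matching its $(1-r)^{-2}$ dependence against the $(1-r')^{-1}$ dependence of the \cref{lem:distortion2} error at the second transfer is the essential algebraic step encoded in the hypothesis.
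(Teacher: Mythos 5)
Your proposal is correct and follows essentially the same route as the paper's proof: the same tangent-space computation in $T_{y_k}M$ (using $\log_{y_k}(x_k)=-\tfrac{\xi}{\lambda_k-1}v_k$, g-convexity at $x^*$ and $x_k$, and $L$-smoothness at $x_{k+1}$), the same application of \cref{lem:distortion1} to $x_k\to y_k$ with $r=\tau_k$ to produce the $(\xi-\zeta)$ bonus, and the same application of \cref{lem:distortion2} to $y_k\to x_{k+1}$ with $a=v_k$, $r=\xi/\lambda_k$ to produce the error that the hypothesis on $\xi$ and $T$ absorbs. The only cosmetic difference is that the paper uses $\lambda_k^2-\lambda_k\le\lambda_{k-1}^2$ inline when forming the weighted sum, while you defer this to the end by computing $\lambda_k^2-\lambda_{k-1}^2-\lambda_k=-\tfrac14$; the two are equivalent.
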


In particular, we can show that the parameters $\xi=\zeta+3(\zeta-\delta)$ and $T=4\xi$ satisfy the condition in \cref{thm:rnag-c}. In this case, the monotonicity of the potential function yields
\[
f\left(x_{k}\right)-f\left(x^{*}\right)\leq\frac{1}{s\lambda_{k-1}^{2}}\phi_{k}\leq\frac{1}{s\lambda_{k-1}^{2}}\phi_{0}.
\]
Thus, RNAG-C achieves acceleration. The result is summarized in the following corollary.

\begin{restatable}{corollary}{rnagccor}
	\label{cor:rnag-c}
	Let $f$ be a g-convex and geodesically $L$-smooth function. Then, RNAG-C with parameters $\xi=\zeta+3(\zeta-\delta)$, $T=4\xi$ and step size $s=\frac{1}{L}$ finds an $\epsilon$-approximate solution in $O\left(\xi\sqrt{\frac{L}{\epsilon}}\right)$ iterations.
\end{restatable}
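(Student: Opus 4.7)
The plan is to invoke \cref{thm:rnag-c} with the specific parameter choice, so the first task is to verify that $\xi = \zeta + 3(\zeta - \delta)$ and $T = 4\xi$ do satisfy the hypothesis of the theorem for every $k \geq 0$. With $T = 4\xi$, the schedule simplifies to $\lambda_k = (k + 6\xi)/2$, so $\lambda_k \geq 3\xi \geq 3$ for all $k$ (using $\xi \geq \zeta \geq 1$). Observe also that $\xi - \delta = 4(\zeta - \delta)$ and $\xi - \zeta = 3(\zeta - \delta)$; if $\zeta = \delta$ the hypothesis is vacuous on both sides, so I can assume $\zeta > \delta$ and divide through by $(\zeta - \delta)\xi$.

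Writing $u = \xi/(\lambda_k + \xi - 1)$ and using $1 - (1 - u)^2 = u(2 - u)$, the right side of the hypothesis becomes $3(\zeta - \delta)\xi(2\lambda_k + \xi - 2)/(\lambda_k - 1)^2$, while the left side is $2(\zeta - \delta)\xi/(\lambda_k - \xi)$. After cross-multiplication the condition reduces to the polynomial inequality
\[
p(\lambda_k) := 4\lambda_k^2 - (3\xi + 2)\lambda_k - 3\xi^2 + 6\xi - 2 \geq 0.
\]
I would verify this by computing $p(3\xi) = 24\xi^2 - 2 \geq 0$ and noting $p'(\lambda) = 8\lambda - 3\xi - 2 > 0$ on $\lambda \geq 3\xi$, so that $p(\lambda_k) \geq 0$ for all admissible $k$. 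This algebraic verification is the main (though elementary) obstacle, and also explains why the choice $T = 4\xi$ provides just enough slack $\lambda_k \geq 3\xi$ for the comparison lemmas to take effect.

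With the hypothesis of \cref{thm:rnag-c} verified, monotonicity of $\phi_k$ gives $f(x_k) - f(x^*) \leq \phi_0 / (s \lambda_{k-1}^2)$. To convert this into the stated iteration complexity, I would bound the two factors separately. Since $\bar{v}_0 = 0$, the initial potential reduces to $\phi_0 = s \lambda_{-1}^2 (f(x_0) - f(x^*)) + (\xi/2)\, d(x_0, x^*)^2$. Geodesic $L$-smoothness at the optimum (where $\grad f(x^*) = 0$) yields $f(x_0) - f(x^*) \leq (L/2)\, d(x_0, x^*)^2 \leq (L/2) D^2$, and with $s = 1/L$ and $\lambda_{-1} = (6\xi - 1)/2 = O(\xi)$ this gives $\phi_0 = O(\xi^2 D^2)$. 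On the other hand $\lambda_{k-1} = (k - 1 + 6\xi)/2 = \Omega(k + \xi)$, so $s \lambda_{k-1}^2 = \Omega(k^2/L)$. Combining, $f(x_k) - f(x^*) \leq O(L \xi^2 D^2 / k^2)$, and forcing this below $\epsilon$ produces $k = O(\xi D \sqrt{L/\epsilon})$, which is the claimed $O(\xi \sqrt{L/\epsilon})$ bound once $D$ is absorbed into the implicit constant.
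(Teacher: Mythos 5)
Your proposal is correct and follows the same overall structure as the paper's proof: verify the hypothesis of \cref{thm:rnag-c}, apply monotonicity of $\phi_k$, bound $\phi_0$ via $\bar{v}_0 = 0$ and geodesic $L$-smoothness at $x^*$, and conclude. Step~2 is essentially identical to the paper. The one point of genuine divergence is in Step~1: you cross-multiply the hypothesis directly and reduce it to the quadratic $p(\lambda_k) = 4\lambda_k^2 - (3\xi+2)\lambda_k - 3\xi^2 + 6\xi - 2 \geq 0$, which you confirm via $p(3\xi) = 24\xi^2 - 2 > 0$ and $p' > 0$ on $[3\xi, \infty)$. The paper instead avoids the exact polynomial by chaining three scalar bounds: it drops the square via $\frac{1}{(1-\tau_k)^2} \geq \frac{1}{1-\tau_k}$, uses $\tau_k \geq \tfrac{3}{4}r$ with $r = \xi/\lambda_k \in (0, 1/3]$, and invokes the elementary inequality $2\bigl(\tfrac{1}{1-t}-1\bigr) \leq 3\bigl(\tfrac{1}{1-(3/4)t}-1\bigr)$ on $(0, 1/3]$ together with $\xi-\zeta = 3(\zeta-\delta)$ and $\tfrac{\xi-\delta}{2} = 2(\zeta-\delta)$. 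Your version is tighter and arguably more transparent about exactly how much slack the choice $T = 4\xi$ provides, while the paper's version keeps the verification as a chain of reusable monotonicity facts. Both are valid. One small caveat for polish: your observation that the case $\zeta = \delta$ is vacuous is fine, but you could equivalently note that in that case $\xi = \zeta = \delta = 1$, so both sides of the hypothesis vanish identically.
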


This result implies that  the iteration complexity of RNAG-C is the same as that of NAG-C because $\xi$ is a constant. 
The proofs of \cref{thm:rnag-c} and \cref{cor:rnag-c} are contained in Appendix~\ref{app:rnag-c}.

\subsubsection{The geodesically strongly convex case}

For the g-strongly convex case, we consider a potential function defined as
\begin{equation}
	\label{eq:potential_rnag-sc}
	\begin{aligned}
		\phi_{k}&=\left(1-\sqrt{\frac{q}{\xi}}\right)^{-k}\Biggl(f\left(x_{k}\right)-f\left(x^{*}\right)\\&\quad+\frac{\mu}{2}\left\Vert {\color{red}\bar{v}_{k}}-\log_{\color{red}x_{k}}\left(x^{*}\right)\right\Vert ^{2}+\frac{\mu(\xi-1)}{2}\left\Vert {\color{red}\bar{v}_{k}}\right\Vert ^{2}\Biggr).
	\end{aligned}
\end{equation}
This potential function is also shown to be decreasing under appropriate conditions on $\xi$ and $s$.

\begin{restatable}{theorem}{rnagscthm}
	\label{thm:rnag-sc}
	Let $f$ be a geodesically $\mu$-strongly convex and geodesically $L$-smooth function. If the step size $s$ and the parameter $\xi$ of RNAG-SC satisfy $\xi\geq\zeta$, $\sqrt{\xi q}<1$, and
	\begin{align*}
		& \frac{\xi-\delta}{2}\left(\frac{1}{1-\sqrt{\xi q}}-1\right)\left(1-\sqrt{\frac{q}{\xi}}\right)^{2}-\sqrt{\xi q}\left(1-\sqrt{\frac{q}{\xi}}\right)\\
		& \leq(\xi-\zeta){\color{red}\left(1-\sqrt{\frac{q}{\xi}}\right)}\left(\frac{1}{\left(1-\sqrt{\xi q}/\left(1+\sqrt{\xi q}\right)\right)^{2}}-1\right),
	\end{align*}
	then the iterates of RNAG-SC satisfy $\phi_{k+1}\leq \phi_k$ for all $k\geq0$, where $\phi_k$ is defined as \eqref{eq:potential_rnag-sc}.
\end{restatable}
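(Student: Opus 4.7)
The plan is to reduce $\phi_{k+1}\le\phi_k$ to the one-step inequality $\Delta_{k+1}\le(1-\sqrt{q/\xi})\Delta_k$, where $\Delta_k:=f(x_k)-f(x^*)+\frac{\mu}{2}\|v_k-\log_{y_k}(x^*)\|^2+\frac{\mu(\xi-1)}{2}\|v_k\|^2$, and to prove this in two stages: (i) transport the quantities in $\Delta_{k+1}$ from $T_{y_{k+1}}M$ back to the common tangent space $T_{y_k}M$ using the metric-distortion lemmas, and (ii) perform an NAG-SC--style algebraic computation inside $T_{y_k}M$ mirroring the Euclidean argument in \cref{app:su}.

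For stage (i), I would apply \cref{lem:distortion2} to the transition $T_{y_k}M\to T_{x_{k+1}}M$ with the decomposition $\bar{\bar{v}}_{k+1}=a+b$, $a=(1-\sqrt{q/\xi})v_k$ and $b=-\frac{1}{\mu}\sqrt{q/\xi}\,\grad f(y_k)$; since $\log_{y_k}(x_{k+1})=-s\grad f(y_k)$, the required parameter is $r=s\mu/\sqrt{q/\xi}=\sqrt{\xi q}\in(0,1)$, and the resulting additive slack is proportional to $\frac{\xi-\delta}{2}(1-\sqrt{q/\xi})^2\bigl(\frac{1}{1-\sqrt{\xi q}}-1\bigr)\|v_k\|^2$, reproducing the first term on the LHS of the stated condition. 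For the transition $T_{x_{k+1}}M\to T_{y_{k+1}}M$, I would use \cref{lem:distortion1} (which carries $\zeta-1$ instead of $\xi-1$ on the kinetic term) and then compensate the $\xi-\zeta$ difference via the exact identity $\|v_{k+1}\|^2=\|\bar{v}_{k+1}\|^2/(1+\sqrt{\xi q})^2$, which follows from the $y_{k+1}$-update; expanding $\|\bar{v}_{k+1}\|^2=\|\bar{\bar{v}}_{k+1}+s\grad f(y_k)\|^2$ in $T_{y_k}M$ produces $(\xi-\zeta)$-proportional terms that match the RHS of the stated condition.

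For stage (ii), set $g=\grad f(y_k)$, $w^*=\log_{y_k}(x^*)$, $A=1-\sqrt{q/\xi}$, $\alpha=\sqrt{q/\xi}$, so $A+\alpha=1$ and $\bar{\bar{v}}_{k+1}=Av_k-\frac{\alpha}{\mu}g$. Expand $\|\bar{\bar{v}}_{k+1}-w^*\|^2$ and $\|\bar{\bar{v}}_{k+1}\|^2$ using the Euclidean identity $\|Au+\alpha v\|^2=A\|u\|^2+\alpha\|v\|^2-A\alpha\|u-v\|^2$, then combine with: (a) geodesic $L$-smoothness, $f(x_{k+1})-f(y_k)\le-\frac{s}{2}\|g\|^2$; (b) $\mu$-strong convexity at $y_k$ against $x^*$ to dominate $\frac{\mu\alpha}{2}\|w^*+g/\mu\|^2$ by $\alpha(f(x^*)-f(y_k))+\frac{\alpha}{2\mu}\|g\|^2$; and (c) the geodesic convex inequality at $y_k$ against $x_k$, exploiting the algorithmic identity $\log_{y_k}(x_k)=-\sqrt{\xi q}\,v_k$ (which follows from the $y_k$-update and the definition of $v_k$) to obtain $f(y_k)-f(x_k)\le\sqrt{\xi q}\langle g,v_k\rangle$. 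The algorithm parameters are designed so that the coefficients of $\|g\|^2$ and $\langle g,v_k\rangle$ in the assembled estimate cancel identically, via $\alpha\xi=\sqrt{\xi q}$ and $\alpha^2\xi=q=\mu s$, reducing the one-step inequality to the sign of the residual $\|v_k\|^2$-coefficient.

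The main obstacle is the delicate coefficient matching. In particular, using only the convex form (rather than the full $\mu$-strong form) in (c) yields the $-\sqrt{\xi q}(1-\sqrt{q/\xi})$ second term on the LHS of the stated condition, while pairing the metric-distortion slack from stage (i) with the $\xi-\zeta$-proportional compensation from the second distortion produces the RHS. The hardest bookkeeping is ensuring that the $\|g\|^2$ and $\langle g,v_k\rangle$ contributions arising from the expansion of $\|\bar{v}_{k+1}\|^2$ in the compensation step do not disturb the earlier cancellations, and that the net $\|v_k\|^2$-coefficient matches the stated inequality exactly; choosing the non-tightest convexity bound in (c) is precisely what makes the final form align with $(1-\sqrt{\xi q})$ rather than $(1-\xi q)$ in the denominator.
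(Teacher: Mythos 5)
Your overall plan---reduce to the one-step inequality, handle metric distortion via Lemmas 2 and 1, then do a Euclidean-style weighted-sum computation in $T_{y_k}M$---is the paper's strategy, and your stage (i) application of Lemma 2 (with $a=(1-\sqrt{q/\xi})v_k$, $b=-\tfrac{1}{\mu}\sqrt{q/\xi}\,\grad f(y_k)$, $r=\sqrt{\xi q}$) together with your stage (ii) ingredients (geodesic $L$-smoothness, $\mu$-strong convexity against $x^*$, plain geodesic convexity against $x_k$ via $\log_{y_k}(x_k)=-\sqrt{\xi q}\,v_k$) agree with the paper's Step 1 and the first half of its Step 2.

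The genuine gap is in the second half of your stage (i). Applying Lemma 1 at the transition $T_{x_{k+1}}M\to T_{y_{k+1}}M$ gives a $(\xi-\zeta)$-compensation equal to
\[
(\xi-\zeta)\bigl(\|\bar v_{k+1}\|^2-\|v_{k+1}\|^2\bigr)=(\xi-\zeta)\Bigl(\tfrac{1}{(1-\tau)^2}-1\Bigr)\|v_{k+1}\|^2,
\]
a multiple of $\|v_{k+1}\|^2$, while every other term in the residual (including the LHS of the stated hypothesis) is a scalar multiple of $\|v_k\|^2$. The hypothesis compares scalar coefficients, so it gives you nothing unless the two squared norms coincide. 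Your proposed remedy---expanding $\|\bar v_{k+1}\|^2=\|\bar{\bar v}_{k+1}+s\,\grad f(y_k)\|^2$ in $T_{y_k}M$---does not close this: since $\bar{\bar v}_{k+1}+s\,\grad f(y_k)=(1-\sqrt{q/\xi})v_k+\tfrac{\sqrt{q/\xi}}{\mu}(\sqrt{\xi q}-1)\,\grad f(y_k)$, the expansion introduces $\|\grad f(y_k)\|^2$ and $\langle v_k,\grad f(y_k)\rangle$ cross-terms that do not appear in the stated condition and would disturb the stage (ii) cancellations. You correctly flag this as ``the hardest bookkeeping'', but you neither carry it out nor show it recovers exactly the stated scalar condition rather than some other one. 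For reference, the paper instead applies Lemma 1 at $T_{x_k}M\to T_{y_k}M$ (subscript $k$, not $k+1$), mirroring the RNAG-C proof, which makes the $(\xi-\zeta)$-term a $\|v_k\|^2$-multiple directly; if you follow your $k{+}1$ route, you must actually do the reabsorption you sketch and verify what condition it yields, and if you follow the paper's $k$ route you must explain how the $v_k$-potential terms close, since for RNAG-SC the Step~1 $v_k$-coefficient is $-(1-\sqrt{q/\xi})$ rather than the $-1$ that makes the RNAG-C version cancel cleanly.
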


In particular, the parameters $\xi=\zeta+3(\zeta-\delta)$ and $s=\frac{1}{9\xi L}$ satisfy the condition in \cref{thm:rnag-sc}. In this case, by monotonicity of the potential function, we have
\[ 
f\left(x_{k}\right)-f\left(x^{*}\right)\leq\left(1-\sqrt{\frac{q}{\xi}}\right)^{k}\phi_{k}\leq\left(1-\sqrt{\frac{q}{\xi}}\right)^{k}\phi_{0},
\]
which implies that RNAG-SC achieves acceleration. The following corollary summarizes the result. 

\begin{restatable}{corollary}{rnagsccor}
	\label{cor:rnag-sc}
	Let $f$ be a geodesically $\mu$-strongly convex and geodesically $L$-smooth function. Then, RNAG-SC with parameter $\xi=\zeta+3(\zeta-\delta)$ and step size $s=\frac{1}{9\xi L}$ finds an $\epsilon$-approximate solution in $O\left(\xi\sqrt{\frac{L}{\mu}}\log\left(\frac{L}{\epsilon}\right)\right)$ iterations.
\end{restatable}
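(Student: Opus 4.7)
The plan is to apply Theorem \ref{thm:rnag-sc} directly: once we verify that the chosen parameter $\xi=\zeta+3(\zeta-\delta)$ and step size $s=\tfrac{1}{9\xi L}$ satisfy the hypotheses of that theorem, the monotonicity $\phi_{k+1}\le\phi_k$ of the potential \eqref{eq:potential_rnag-sc} gives us a linear decay, and we then need only unravel the definition of $\phi_k$ and bound the initial potential $\phi_0$ in terms of problem data in order to solve for the iteration count.

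First, I would verify the three conditions in Theorem \ref{thm:rnag-sc}. The condition $\xi\geq\zeta$ is immediate since $\zeta\geq 1\geq\delta$ forces $3(\zeta-\delta)\geq 0$. For $\sqrt{\xi q}<1$, note that $q=\mu s=\mu/(9\xi L)$ and hence $\xi q=\mu/(9L)\leq 1/9$, so $\sqrt{\xi q}\leq 1/3$. The main technical task is to check the composite inequality
\[
\frac{\xi-\delta}{2}\!\left(\frac{1}{1-\sqrt{\xi q}}-1\right)\!\left(1-\sqrt{\tfrac{q}{\xi}}\right)^{2}\!-\sqrt{\xi q}\!\left(1-\sqrt{\tfrac{q}{\xi}}\right)\leq(\xi-\zeta)\!\left(\frac{1}{\bigl(1-\sqrt{\xi q}/(1+\sqrt{\xi q})\bigr)^{2}}-1\right).
\]
Using $\sqrt{\xi q}\le 1/3$, I would bound the right-hand side from below using $1-\sqrt{\xi q}/(1+\sqrt{\xi q})=1/(1+\sqrt{\xi q})$ so that the RHS equals $(\xi-\zeta)\bigl((1+\sqrt{\xi q})^2-1\bigr)\geq 2(\xi-\zeta)\sqrt{\xi q}$, while the LHS is majorized by the elementary estimate $\tfrac{\xi-\delta}{2}\cdot\tfrac{\sqrt{\xi q}}{1-\sqrt{\xi q}}$, which for $\sqrt{\xi q}\leq 1/3$ is at most $\tfrac{3(\xi-\delta)}{4}\sqrt{\xi q}$. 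Since $\xi-\zeta=3(\zeta-\delta)$, it remains a routine check — using $\xi-\delta=4(\zeta-\delta)$ — that $\tfrac{3(\xi-\delta)}{4}\leq 2(\xi-\zeta)$, i.e.\ $3(\zeta-\delta)\leq 6(\zeta-\delta)$, which holds. (This is the paragraph I expect to be the most delicate: the algebra above is the heart of the corollary, and the constant $9$ in the step size is precisely tuned so that $\sqrt{\xi q}$ is small enough for these estimates to close.)

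Second, once Theorem \ref{thm:rnag-sc} applies, iterating $\phi_{k+1}\leq\phi_k$ and substituting the definition of $\phi_k$ yields
\[
f(x_{k})-f(x^{*})\leq \left(1-\sqrt{\tfrac{q}{\xi}}\right)^{k}\phi_{0}.
\]
To make this quantitative, I would bound $\phi_0$. From the initialization $\bar v_0=0$ one computes $y_0=x_0$ and $v_0=0$, so that $\phi_0=f(x_0)-f(x^*)+\tfrac{\mu}{2}\|\log_{x_0}(x^*)\|^2$; by $L$-smoothness and Assumption \ref{ass:1}, this is at most $\tfrac{L+\mu}{2}D^2=O(LD^2)$.

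Finally, to obtain an $\epsilon$-approximate solution it suffices that $(1-\sqrt{q/\xi})^k \phi_0\le\epsilon$. Plugging in $\sqrt{q/\xi}=\tfrac{1}{3\xi}\sqrt{\mu/L}$, the inequality $\log(1/(1-t))\geq t$ gives
\[
k\;\geq\;3\xi\sqrt{\tfrac{L}{\mu}}\,\log\!\left(\tfrac{\phi_0}{\epsilon}\right)\;=\;O\!\left(\xi\sqrt{\tfrac{L}{\mu}}\,\log\!\tfrac{L}{\epsilon}\right),
\]
which is the claimed iteration complexity. The only genuinely new work beyond what is handed to us by Theorem \ref{thm:rnag-sc} is the verification of the parameter inequality in the second paragraph; the remaining steps are mechanical once the potential-function bound is in hand.
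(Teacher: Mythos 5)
Your proposal is correct and follows essentially the same route as the paper: verify the three hypotheses of Theorem~\ref{thm:rnag-sc}, then iterate $\phi_{k+1}\le\phi_k$, bound $\phi_0$ by $L$-smoothness, and solve for $k$. The only real variation is in how the composite parameter inequality is closed. You linearize both sides in $\sqrt{\xi q}$, bounding the right-hand side below by the linear term $2(\xi-\zeta)\sqrt{\xi q}$ (discarding the positive $\xi q$ contribution) and the left-hand side above by $\tfrac{3}{4}(\xi-\delta)\sqrt{\xi q}$, reducing the task to the clean scalar comparison $3(\zeta-\delta)\le 6(\zeta-\delta)$. The paper instead first replaces the square $\tfrac{1}{(1-t/(1+t))^2}$ by the weaker first power, so its right-hand side lower bound is $(\xi-\zeta)\sqrt{\xi q}$; this makes its final inequality $\tfrac{3(\xi-\delta)}{4}\le\xi-\zeta$ hold with equality at $\sqrt{\xi q}=1/3$ rather than with a factor-of-two margin. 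Both chains of estimates are valid for the given choices $\xi=\zeta+3(\zeta-\delta)$ and $s=\tfrac{1}{9\xi L}$; your retention of the quadratic term simply gives more slack and is arguably a bit more transparent. The rest — the computation $\phi_0=f(x_0)-f(x^*)+\tfrac{\mu}{2}\|\log_{x_0}(x^*)\|^2$ from $\bar v_0=0\Rightarrow y_0=x_0,\,v_0=0$, the $O(LD^2)$ bound, and the use of $(1-t)^k\le e^{-tk}$ (equivalently $\log\tfrac{1}{1-t}\ge t$) to extract the iteration count — matches the paper's proof step for step.
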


Because $\xi$ is a constant, the iteration complexity of RNAG-SC is the same as that of NAG-SC. 
The proofs of \cref{thm:rnag-sc} and \cref{cor:rnag-sc} can be found in Appendix~\ref{app:rnag-sc}.

\section{Continuous-Time Interpretation}
\label{sec:continuous}

In this section, we identify a connection to the ODEs for modeling Riemannian acceleration in~\citep[Equations~2 and 4]{alimisis2020continuous}.
Specifically, 
following the informal arguments in \citep[Section~2]{su2014} and \citep[Section~4.8]{daspremont2021acceleration}, we obtain ODEs by taking the limit $s\rightarrow 0$ in our schemes. 
The detailed analysis is contained in \cref{app:continuous}. 

For a sufficiently small $s$, the Euclidean geometry is valid as only a sufficiently small subset of $M$ is considered.
Thus, we informally assume $M=\mathbb{R}^n$ for simplicity. 
We can show that the iterations of RNAG-C satisfy
\begin{align*}
	& \frac{y_{k+1}-y_{k}}{\sqrt{s}}\\
	& =\frac{\lambda_{k}-1}{\lambda_{k+1}+(\xi-1)}\frac{y_{k}-y_{k-1}}{\sqrt{s}}\\
	& \quad-\frac{\lambda_{k+1}}{\lambda_{k+1}+(\xi-1)}\sqrt{s}\grad f\left(y_{k}\right)\\
	& \quad+\frac{\lambda_{k}-1}{\lambda_{k+1}+(\xi-1)}\sqrt{s}\left(\grad f\left(y_{k-1}\right)-\grad f\left(y_{k}\right)\right).
\end{align*}
We introduce a smooth curve $y(t)$ that is approximated by the iterates of RNAG-C as $y(t)\approx y_{t/\sqrt{s}}=y_k$ with $k=\frac{t}{\sqrt{s}}$. Using the Taylor expansion, we have
\begin{align*}
	\frac{y_{k+1}-y_{k}}{\sqrt{s}} & =\dot{y}(t)+\frac{\sqrt{s}}{2}\ddot{y}(t)+o\left(\sqrt{s}\right),\\
	\frac{y_{k}-y_{k-1}}{\sqrt{s}} & =\dot{y}(t)-\frac{\sqrt{s}}{2}\ddot{y}(t)+o\left(\sqrt{s}\right),\\
	\sqrt{s}\grad f\left(y_{k-1}\right) & =\sqrt{s}\grad f\left(y_{k}\right)+o\left(\sqrt{s}\right).
\end{align*}
Letting $s\rightarrow0$ yields the ODE\footnote{When $M\neq\mathbb{R}^n$, we replace $\ddot{y}$ with the acceleration $\nabla_{\dot{y}}\dot{y}=D_t\dot{y}$, where $D_t$ is a covariant derivative along the curve $y$ (see \cref{app:background}).}
\begin{equation}
	\label{eq:rnag-ode-c}
	\nabla_{\dot{y}}\dot{y}+\frac{1+2\xi}{t}\dot{y}+\grad f(y)=0,
\end{equation}
where the covariant derivative $\nabla_{\dot{y}}\dot{y}=D_t\dot{y}$ is a natural extension of the second derivative $\ddot{y}$ (see \cref{app:background}). 

In the g-strongly convex case, we can show that the iterations of RNAG-SC satisfy
\begin{align*}
	& \frac{y_{k+1}-y_{k}}{\sqrt{s}}\\
	& =\frac{1-\sqrt{q/\xi}}{1+\sqrt{\xi q}}\frac{y_{k}-y_{k-1}}{\sqrt{s}}-\frac{1+\sqrt{ q/\xi}}{1+\sqrt{\xi q}}\sqrt{s}\grad f\left(y_{k}\right)\\
	& \quad+\frac{1-\sqrt{ q/\xi}}{1+\sqrt{\xi q}}\sqrt{s}\left(\grad f\left(y_{k-1}\right)-\grad f\left(y_{k}\right)\right).
\end{align*}
Through a similar limiting process, we obtain the following ODE:
\begin{equation}
	\label{eq:rnag-ode-sc}
	\nabla_{\dot{y}}\dot{y}+\left(\frac{1}{\sqrt{\xi}}+\sqrt{\xi}\right)\sqrt{\mu}\dot{y}+\grad f(y)=0.
\end{equation}

Replacing the parameter $\xi$ in the coefficients of our ODEs with $\zeta$,  we recover \citep[Equations~2 and 4]{alimisis2020continuous}.
Because $\xi\geq\zeta$, the continuous-time acceleration results \citep[Theorems~5 and 7]{alimisis2020continuous}
are valid for our ODEs as well. 
Thus, this analysis confirms the accelerated convergence of our algorithms through the lens of continuous-time flows.

In both ODEs,   the parameter $\xi \geq \zeta$ appears in the coefficient of the friction term $\dot{X}$, increasing with $\xi$. Intuitively, this makes sense because $\zeta$ is large for an ill-conditioned domain, where $-K_{\min}$ and $D$ are large and thus metric distortion is more severe  (where one might want to decrease the effect of momentum).

\section{Experiments}
\label{sec:experiments}

\begin{figure*}[ht]
	\centering
	\subfigure[Rayleigh quotient maximization]{\includegraphics[width=0.33\textwidth]{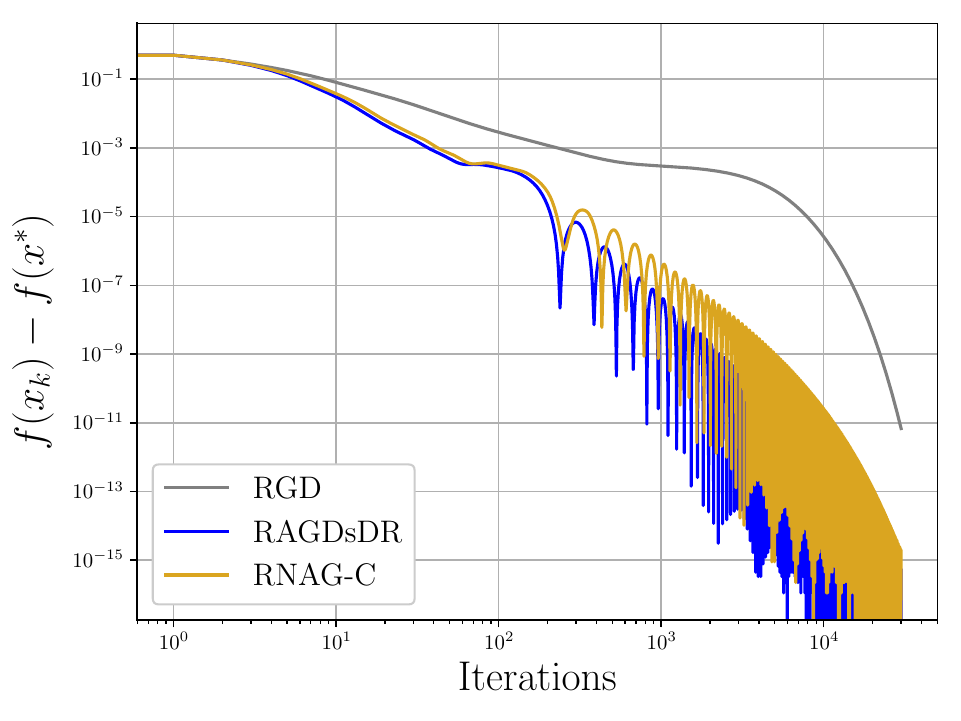}\label{fig:rayleigh}}
	\subfigure[Karcher mean of SPD matrices]{\includegraphics[width=0.33\textwidth]{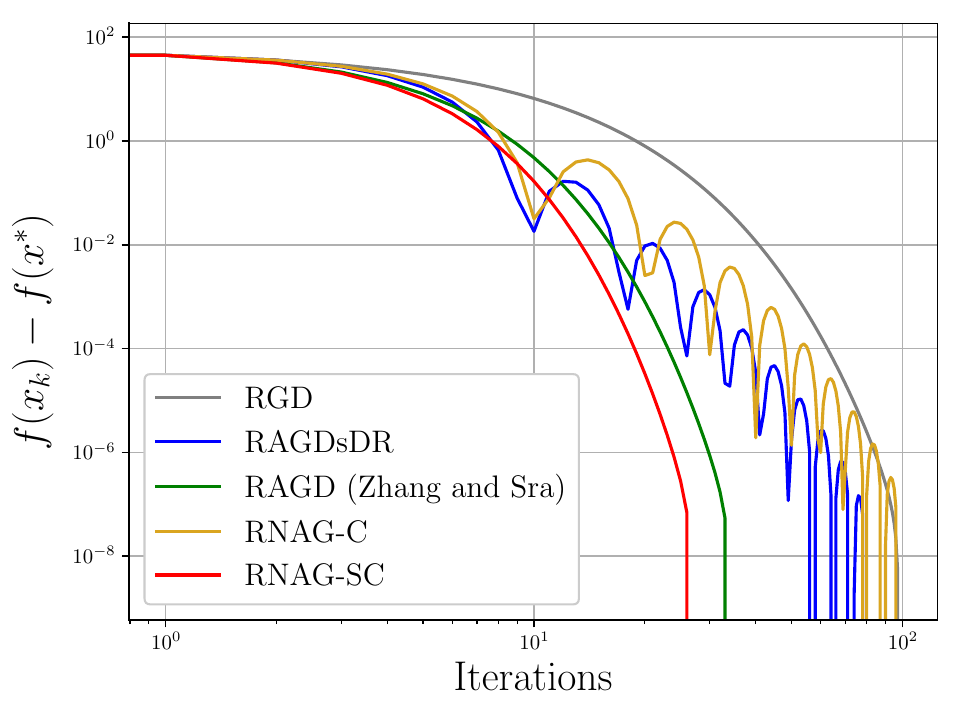}\label{fig:spd}}
	\subfigure[Karcher mean on hyperbolic space]{\includegraphics[width=0.33\textwidth]{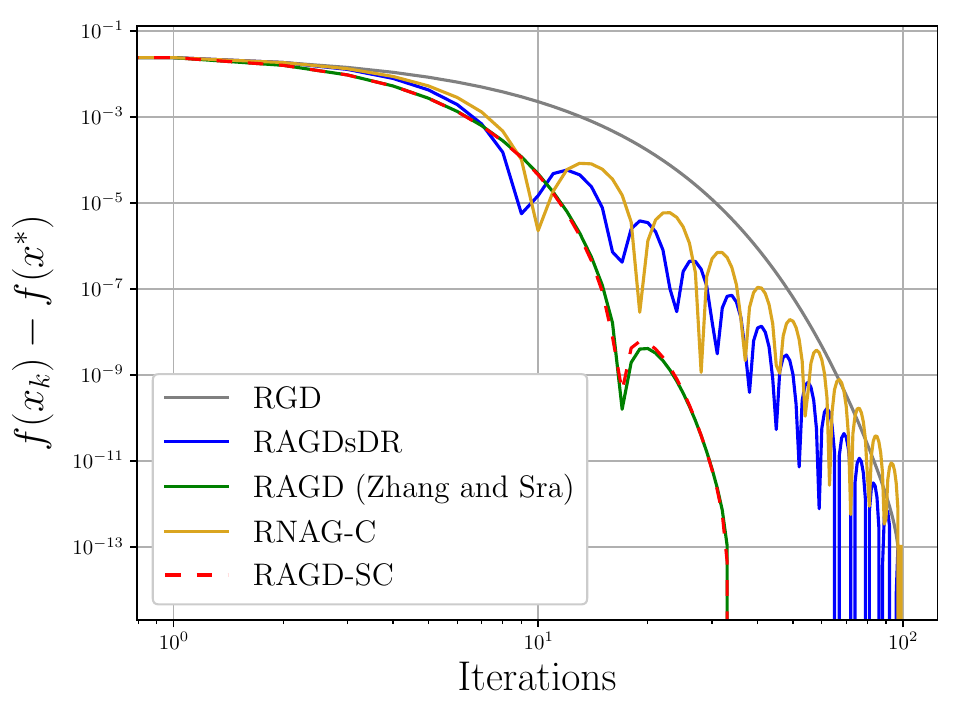}\label{fig:hyper}}
	\caption{Performances of various Riemannian optimization algorithms on the Rayleigh quotient maximization problem and the Karcher mean problem.}\label{fig:experiments}
\end{figure*}

In this section, we examine the performance of our algorithms on the Rayleigh quotient maximization problem and the Karcher mean problem. To implement the geometry of manifolds, we used the Python libraries Pymanopt \citep{townsend2016pymanopt} and Geomstats \citep{geomstats}. For comparison, we use the known accelerated algorithms RAGD \citep{zhang2018estimate} for the g-strongly convex case and RNAGsDR with no line search \citep{alimisis2021momentum} for the g-convex case.  The source code of our RNAG implementation is available online.\footnote{\url{https://github.com/jungbinkim1/RNAG}}

We set the input parameters as $\zeta=1$ for implementing RAGDsDR, and $\xi=1$ for implementing our algorithms. The stepsize was chosen as $s=\frac{1}{L}$ in our algorithms.

{\bf Rayleigh quotient maximization.}

Given a real $d\times d$ symmetric matrix $A$, we consider the problem
\[
\min_{x\in\mathbb{S}^{d-1}}f(x)=-\frac{1}{2}x^{\top}Ax.
\]
on the unit $(d-1)$-sphere on $\mathbb{S}^{d-1}$. For this manifold, we set $K_{\min}=K_{\max}=1$. We let $d=1000$ and $A=\frac{1}{2}\left(B+B^{\top}\right)$, where the entries of $B\in\mathbb{R}^{d\times d}$ were randomly generated by the Gaussian distribution $N(0,1/d)$. We have the smoothness parameter $L=\lambda_{\max}-\lambda_{\min}$ by the following proposition. 

\begin{restatable}{proposition}{experimenta}
	\label{prop:rayleigh}
	The function $f$ is geodesically $\left(\lambda_{\max}-\lambda_{\min}\right)$-smooth, where $\lambda_{\max}$ and $\lambda_{\min}$ are the largest and smallest eigenvalues of $A$, respectively.
\end{restatable}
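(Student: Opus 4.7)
The plan is to control the Riemannian Hessian of $f$ along every geodesic of the sphere and then invoke Taylor's theorem to upgrade this into geodesic smoothness.

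First, I compute the Hessian quadratic form by writing down an explicit unit-speed geodesic. For $x \in \mathbb{S}^{d-1}$ and $v \in T_x\mathbb{S}^{d-1}$ with $\|v\| = 1$ (so $v \perp x$ in $\mathbb{R}^d$), the geodesic is $\gamma(t) = \cos(t)\,x + \sin(t)\,v$. A direct substitution gives
\begin{equation*}
f(\gamma(t)) = -\tfrac{1}{2}\cos^{2}(t)\,x^{\top}Ax - \sin(t)\cos(t)\,x^{\top}Av - \tfrac{1}{2}\sin^{2}(t)\,v^{\top}Av,
\end{equation*}
whose second derivative at $t=0$ equals $x^{\top}Ax - v^{\top}Av$. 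Therefore $\langle \Hess f(x)[v], v\rangle = x^{\top}Ax - v^{\top}Av$ for every unit tangent $v$.

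Next I bound this quadratic form using the Rayleigh bounds $\lambda_{\min} \leq u^{\top}Au \leq \lambda_{\max}$ for every unit vector $u \in \mathbb{R}^d$ (applied to both $x$ and $v$). This yields
\begin{equation*}
\bigl|\langle \Hess f(x)[v], v\rangle\bigr| \leq \lambda_{\max} - \lambda_{\min},
\end{equation*}
i.e.\ the operator norm of the Riemannian Hessian is globally bounded by $L := \lambda_{\max} - \lambda_{\min}$.

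Finally, I lift this pointwise Hessian bound to geodesic $L$-smoothness. Given $x, y \in \mathbb{S}^{d-1}$, let $\gamma : [0,1] \to \mathbb{S}^{d-1}$ be the minimizing geodesic with $\gamma(0)=x$ and $\gamma(1)=y$, so that $\|\gamma'(t)\| \equiv \|\log_{x}(y)\|$ is constant. Setting $g(t) := f(\gamma(t))$, we have $g'(0) = \langle \grad f(x), \log_{x}(y)\rangle$ and $g''(t) = \langle \Hess f(\gamma(t))[\gamma'(t)], \gamma'(t)\rangle$, hence $|g''(t)| \leq L\,\|\log_{x}(y)\|^{2}$. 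Taylor's theorem with integral remainder,
\begin{equation*}
g(1) = g(0) + g'(0) + \int_{0}^{1}(1-t)\,g''(t)\,dt,
\end{equation*}
then produces the desired inequality
\begin{equation*}
f(y) \leq f(x) + \langle \grad f(x), \log_{x}(y)\rangle + \tfrac{L}{2}\,\|\log_{x}(y)\|^{2},
\end{equation*}
establishing geodesic $(\lambda_{\max}-\lambda_{\min})$-smoothness. No step here is truly obstructive: the only subtle point is making sure one uses the intrinsic Hessian (defined via $g''$ along the geodesic) rather than a Euclidean Hessian of an ambient extension, but the unit-speed great-circle formula resolves this cleanly.
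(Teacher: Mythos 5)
Your proof is correct and follows essentially the same route as the paper: parametrize the geodesic through $x$ in direction $v$, compute the second derivative of $f$ along it to get $x^{\top}Ax - v^{\top}Av$, and bound it by $\lambda_{\max}-\lambda_{\min}$ via the Rayleigh quotient. Your version is slightly cleaner in two respects—you use the parametrization $\gamma(t)=\cos(t)x+\sin(t)v$ rather than the paper's $\frac{x+\tan(t)v}{\sec(t)}$ (the same curve, but avoiding a product-rule expansion), and you make explicit the final step of lifting the pointwise Hessian bound to the geodesic $L$-smoothness inequality via Taylor's theorem with integral remainder, which the paper leaves implicit.
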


The proof can be found in \cref{app:exp}. The result is shown in \cref{fig:rayleigh}.  We observe that RNAG-C outperforms RGD and is comparable to RAGDsDR, a known accelerated method for the g-convex case.

{\bf Karcher mean of SPD matrices.}
When $K_{\max}\leq0$, the Karcher mean \citep{karcher1977riemannian} of the points $p_i\in M$ for $i=1,\ldots,n$, is defined as the solution of 
\begin{equation}
	\label{eq:karcher}
	\min_{x\in M}f(x)=\frac{1}{2n}\sum_{i=1}^{n}d\left(x,p_{i}\right)^{2}.
\end{equation}

{The following proposition shows that one can set the strong convexity parameter as $\mu=1$.}

\begin{restatable}{proposition}{experimentb}
	
	The function $f$ is geodesically $1$-strongly convex.
\end{restatable}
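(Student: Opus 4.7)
The plan is to reduce to the single-point case and then read off strong convexity from a second-derivative computation that plugs directly into \cref{prop:alimisis}. Since the defining inequality of geodesic $\mu$-strong convexity is linear in the function, if for every fixed $p\in N$ the function $g_p(x):=\tfrac{1}{2}d(x,p)^{2}$ is geodesically $1$-strongly convex on $N$, then $f=\tfrac{1}{n}\sum_{i=1}^{n}g_{p_i}$ inherits the same property with the same constant. So everything reduces to showing the single-point statement for $g_p$.

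For the single-point claim, recall the standard identity $\grad g_{p}(x)=-\log_{x}(p)$, which is well posed because \cref{ass:1} makes $\exp_{x}$ a diffeomorphism on $N$. For any geodesic $\gamma:[0,1]\to N$, differentiating $g_p\circ\gamma$ twice and using $D_{t}\gamma'=0$ gives
\[
\frac{d^{2}}{dt^{2}}g_{p}(\gamma(t))=\bigl\langle -D_{t}\log_{\gamma(t)}(p),\,\gamma'(t)\bigr\rangle.
\]
Applying \cref{prop:alimisis} to the curve $\gamma$ with the base point of the proposition set to $p$ bounds the right-hand side below by $\delta\|\gamma'(t)\|^{2}$. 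Under the hypothesis $K_{\max}\leq 0$, the paper sets $\delta=1$, so $\tfrac{d^{2}}{dt^{2}}g_{p}(\gamma(t))\geq\|\gamma'(t)\|^{2}$. Integrating twice on $[0,1]$ and using that $\|\gamma'(t)\|$ is constant along the geodesic and equal to $d(\gamma(0),\gamma(1))$ yields
\[
g_{p}(\gamma(t))\leq (1-t)g_{p}(\gamma(0))+t\,g_{p}(\gamma(1))-\tfrac{1}{2}t(1-t)d(\gamma(0),\gamma(1))^{2},
\]
which is exactly the $1$-strong convexity of $g_p$ along $\gamma$.

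There is essentially no obstacle: the argument is a direct application of \cref{prop:alimisis} combined with the observation that $\delta=1$ when $K_{\max}\leq 0$ and the classical gradient formula for the squared distance. The averaging step is trivial. The only care required is to ensure via \cref{ass:1} that $\log_x(p)$ is well defined and smooth in $x\in N$, which justifies both the gradient identity and the covariant-derivative calculation above.
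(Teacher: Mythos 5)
Your proof is correct and follows essentially the same route as the paper's: both reduce to bounding $\frac{d^2}{dt^2}\frac{1}{2}d(\gamma(t),p_i)^2$ along a geodesic $\gamma$, drop the $\gamma''$ term because $\gamma$ is a geodesic, and then invoke \cref{prop:alimisis} together with $\delta=1$ (valid since $K_{\max}\leq 0$) to obtain the lower bound. The only cosmetic differences are that you make the averaging reduction explicit, you cite the gradient identity $\grad g_p=-\log_\cdot(p)$ directly instead of pointing to \cref{prop:alimisis2}, and you write the bound as $\|\gamma'(t)\|^2$ and integrate it out to the pointwise strong-convexity inequality, whereas the paper stops after stating the Hessian bound; neither of these is a substantive difference.
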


The proof can be found in \cref{app:exp}. We consider this problem on the manifold $\mathcal{P}(d)\subseteq\mathbb{R}^{d\times d}$ of symmetric positive definite matrices endowed with the Riemannian metric $\langle X,Y\rangle_{P}=\operatorname{Tr}\left(P^{-1}XP^{-1}Y\right)$. It is known that one can set $K_{\min}=-\frac{1}{2}$ and $K_{\max}=0$ \citep[Appendix~I]{criscitiello2020accelerated}. We set the dimension and the number of matrices as $d=100$ and $n=50$. The matrices $p_i$ were randomly generated using Matrix Mean Toolbox \citep{bini2013} with condition number ${10}^6$. We set the smoothness parameter as $L=10$. The result is shown in \cref{fig:spd}. We observe that RNAG-SC and RAGD \citep{zhang2018estimate} perform significantly better than \ref{eq:rgd}. The performances of RNAG-C and RAGDsDR are only slightly better than that of \ref{eq:rgd} in early stages. This result makes sense because $f$ is g-strongly convex and well-conditioned.

{\bf Karcher mean on hyperbolic space.}
We consider the problem \eqref{eq:karcher} on the hyperbolic space $\mathbb{H}^d$ with the hyperboloid model $\mathbb{H}^{d}=\left\{ x\in\mathbb{R}^{d+1}:-x_{d+1}^{2}+\sum_{k=1}^{d}x_{k}^{2}=-1\right\} $. For this manifold, we can set $K_{\min}=K_{\max}=-1$. We set the dimension and the number of points as $d=1000$ and $n=10$. First $d$ entries of each point $p_i$ are randomly generated by the Gaussian distribution $N(0,1/d)$. We set the smoothness parameter as $L=10$. The result is similar to that of the previous example, and is shown in \cref{fig:hyper}.

\section{Discussion}\label{sec:discussion}

In this paper, we have proposed novel computationally tractable first-order methods that achieve Riemannian acceleration for both g-convex and g-strongly convex objective functions whenever the constants $K_{\min}$, $K_{\max}$, and $D$ are available. The iteration complexities of RNAG-C and RNAG-SC match those of their Euclidean counterparts. The continuous-time analysis of our algorithms provides an intuitive interpretation of the parameter $\xi$ as a measurement of friction, which is higher when the domain manifold is more ill-conditioned. In fact, the iteration complexities of our algorithms depend on the parameter $\xi\geq\zeta$, which is affected by the values of the constants $K_{\min}$, $K_{\max}$, and $D$. When $\zeta$ is large (i.e., $-K_{\min}$ and $D$ are large), we have a worse guarantee. A possible future direction is to study the effect of the constants $K_{\min}$, $K_{\max}$, and $D$ on the complexities of Riemannian optimization algorithms tightly.

{\bf Comparison with \citep{liu2017accelerated}.}
The algorithms in \citep{liu2017accelerated} achieve acceleration with only standard assumption. However, to implement the operator $\mathbb{S}:(y_{k-1},x_k,x_{k-1})\mapsto y_k$ in \citep[Algorithm~1]{liu2017accelerated}, one needs to solve the following nonlinear equation 	at each iteration:
\begin{align*}
	& (1-\sqrt{\mu/L})\Gamma_{y_{k}}^{y_{k-1}}\log_{y_{k}}\left(x_{k}\right)-\beta\Gamma_{y_{k}}^{y_{k-1}}\grad f\left(y_{k}\right)\\
	& = (1-\sqrt{\mu/L})^{3/2}\log_{y_{k-1}}\left(x_{k-1}\right).
\end{align*}
It is unclear whether this equation is solvable in a tractable way or even feasible as noted in \citep{ahn2020from}. On the other hand, our algorithms involve only operations in tangent spaces and the exponential map, logarithm map, and parallel transport. Thus, our algorithms are computationally tractable for various manifolds in practice, where the operations above are implementable.

{\bf Comparison with \cite{criscitiello2021negative}.}
It is natural to ask how our positive result is not contradictory to the negative result in \cite{criscitiello2021negative}. To clarify this, we provide the following two reasons:

(i) We assume that the diameter $\diam (N)$ of the domain $N$ is bounded, which is a more restrictive condition than their assumption that the distance $d\left(x_0,x^{*}\right)$ is bounded.

(ii) We assume that the diameter $\diam (N)$ is bounded by a fixed constant $D$. Thus, in \cref{cor:rnag-c} and \cref{cor:rnag-sc}, $\xi$ does not depend on other parameters such as $\mu$ and $L$. In contrast, \citep[Theorem~1.3]{criscitiello2021negative} introduces a bound $\frac34 r$ of $d\left(x_0,x^{*}\right)$ by letting $r$ be the solution of $\kappa = 12r\sqrt{-K_{\min}}+9$), thus $r\sqrt{-K_{\min}}$ grows with $\kappa=L/\mu$. A similar discussion can be found in \citep[Remark~29]{martinezrubio2021global}.

We believe that the second one is the main reason for our positive results coexist with their negative results. As mentioned in \cref{sec:related}, their result is not contradictory but complementary to our results.

\section*{Acknowledgements}

We thank the anonymous reviewers for their insightful suggestions	and Dr. Antonio Orvieto and the co-authors of \citep{alimisis2021momentum}  for allowing us to use a part of their  code.
We are also grateful to Dr. Ken'ichiro Tanaka for pointing out an error in Eq. (6) in the previous version. 	
This work was supported in part by Samsung Electronics, the National Research Foundation of Korea funded by MSIT(2020R1C1C1009766), and the Information and Communications Technology Planning and Evaluation (IITP) grant funded by MSIT(2022-0-00124, 2022-0-00480).

\bibliography{reference}
\bibliographystyle{icml2022}

\newpage
\appendix
\onecolumn
\section{Background}
\label{app:background}

\begin{definition}
	A smooth vector field $V$ is a smooth map from $M$ to $TM$ such that $p\circ V$ is the identity map, where $p:TM\rightarrow M$ is the projection. The collection of all smooth vector fields on $M$ is denoted by $\mathfrak{X}(M)$.
\end{definition}

\begin{definition}
	Let $\gamma:I\rightarrow M$ be a smooth curve. A smooth vector field $V$ along $\gamma$ is a smooth map from $I$ to $TM$ such that $V(t)\in T_{\gamma(t)}M$ for all $t\in I$. The collection of all smooth vector fields along $\gamma$ is denoted by $\mathfrak{X}(\gamma)$.
\end{definition}

\begin{proposition}[Fundamental theorem of Riemannian geometry]
	There exists a unique operator
	\[
	\nabla:\mathfrak{X}(M)\times\mathfrak{X}(M)\rightarrow\mathfrak{X}(M):\ (U,V)\mapsto\nabla_UV
	\]
	satisfying the following properties for any $U,V,W\in \mathfrak{X}(M)$, smooth functions $f,g$ on $M$, and $a,b\in\mathbb{R}$:
	\begin{enumerate}
		\item $\nabla_{fU+gW}V=f\nabla_UV+g\nabla_WV$
		\item $\nabla_U(aV+bW)=a\nabla_UV+b\nabla_UW$
		\item $\nabla_U(fV)=(Uf)V+f\nabla_UV$
		\item $[U,V]=\nabla_UV-\nabla_VU$
		\item $U\langle V,W\rangle=\langle\nabla_UV,W\rangle+\langle V,\nabla_UW\rangle$,
	\end{enumerate}
	where $[\cdot,\cdot]$ denotes the Lie bracket. The operator $\nabla$  is called the Levi-Civita connection or the Riemannian connection. The field $\nabla_UV$ is called the covariant derivative of $V$ along $U$.
\end{proposition}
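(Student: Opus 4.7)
The plan is to establish both uniqueness and existence via the classical Koszul formula. First I would assume that some $\nabla$ satisfies all five listed properties and derive a closed-form expression for $\langle \nabla_U V, W\rangle$ in terms of $U$, $V$, $W$, and the metric, thereby proving uniqueness. Then I would take that expression as a definition and verify each property, obtaining existence.

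For uniqueness, I would apply metric compatibility (property 5) three times, once for each cyclic permutation of $(U,V,W)$:
\begin{align*}
U\langle V,W\rangle &= \langle \nabla_U V, W\rangle + \langle V, \nabla_U W\rangle,\\
V\langle W,U\rangle &= \langle \nabla_V W, U\rangle + \langle W, \nabla_V U\rangle,\\
W\langle U,V\rangle &= \langle \nabla_W U, V\rangle + \langle U, \nabla_W V\rangle.
\end{align*}
Adding the first two, subtracting the third, and repeatedly using the torsion-free property (4) to convert each difference $\nabla_X Y - \nabla_Y X$ into the Lie bracket $[X,Y]$, I would arrive at the Koszul formula
\[
2\langle \nabla_U V, W\rangle = U\langle V,W\rangle + V\langle W,U\rangle - W\langle U,V\rangle + \langle [U,V], W\rangle - \langle [V,W], U\rangle - \langle [U,W], V\rangle.
\]
Since the metric is nondegenerate, this identity determines $\nabla_U V$ uniquely from $U$, $V$, and the metric.

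For existence, I would take the Koszul formula as the definition of $\nabla$: at each point $p$, nondegeneracy of $g_p$ produces a unique tangent vector in $T_pM$ realizing the right-hand side as a linear functional of $W_p$, and smoothness of $\nabla_U V$ is inherited from smoothness of the right-hand side. I would then verify all five properties by direct substitution. Properties (4) and (5) follow almost immediately from the cyclic symmetry of the Koszul expression: swapping $U \leftrightarrow V$ in the formula reproduces exactly the difference required by property (4), and combining the Koszul expressions for $\nabla_U V$ and $\nabla_U W$ tested against $W$ and $V$ respectively reproduces property (5).

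The main obstacle, and the bulk of the bookkeeping, will be verifying $C^\infty(M)$-linearity in the first slot (property 1) and the Leibniz rule in the second (property 3). Each requires substituting $fU$ or $fV$ into the Koszul formula, expanding using the fact that $fU$ acts as a derivation on functions and using the identity $[fX,Y] = f[X,Y] - (Yf)X$, and then tracking how the stray terms carrying derivatives of $f$ cancel—in the property~(3) case, leaving precisely the $(Uf)\,V$ term demanded by the Leibniz rule, and in the property~(1) case, cancelling altogether. Once those cancellations are verified the proof is complete. Since this is the classical Levi--Civita theorem, an alternative is simply to cite a standard reference such as Lee's \emph{Riemannian Manifolds} or Petersen's text rather than reproduce the derivation in full.
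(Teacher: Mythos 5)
The paper does not actually prove this proposition; it is stated in Appendix~A purely as background, with the understanding that it is the classical Levi--Civita theorem found in the cited textbooks (Lee, Petersen, Boumal). Your Koszul-formula argument is the standard and correct proof of that theorem: derive the polarization identity from properties~(4) and~(5) to get uniqueness, then take the Koszul formula as a definition and check the five axioms for existence, where the only real work is the $C^\infty$-linearity in $U$ and the Leibniz rule in $V$ via $[fX,Y]=f[X,Y]-(Yf)X$. So your outline is sound and matches what the references the paper points to actually do; the only mismatch with the paper is that the paper simply cites rather than proves, which is also the fallback you correctly identify at the end of your proposal.
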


From now on, we always assume that $M$ is equipped with the Riemannian connection $\nabla$.

\begin{proposition}
	\citep[Section~8.11]{boumal2020intromanifolds}
	For any smooth vector fields $U,V$ on $M$, the vector field $\nabla_UV$ at $x$ depends on $U$ only through $U(x)$. Thus, we can write $\nabla_uV$ to mean $(\nabla_UV)(x)$ for any $U\in\mathfrak{X}(M)$ such that $U(x)=u$, without ambiguity.
\end{proposition}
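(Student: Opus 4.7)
The plan is to establish tensoriality of $\nabla$ in its first argument, then conclude the pointwise dependence via a local frame together with a bump function extension argument. The claim naturally splits into two sub-claims: (i) \emph{locality}: if two smooth vector fields $U_1,U_2$ agree on some open neighborhood of $x$, then $(\nabla_{U_1}V)(x)=(\nabla_{U_2}V)(x)$; and (ii) \emph{pointwise dependence}: if in addition $U_1(x)=U_2(x)$, then $(\nabla_{U_1}V)(x)=(\nabla_{U_2}V)(x)$. Note that property~1 of the Levi-Civita connection gives $C^\infty(M)$-linearity of $\nabla$ in its first argument, which is the only property I will need.

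For locality, set $W=U_1-U_2\in\mathfrak{X}(M)$, which vanishes on an open neighborhood $\Omega$ of $x$. Choose a smooth bump function $\psi\in C^\infty(M)$ with $\psi(x)=0$ and $\psi\equiv 1$ on $M\setminus\Omega$. Then $\psi W=W$ pointwise on all of $M$: on $\Omega$ both sides are zero since $W=0$, and on $M\setminus\Omega$ we have $\psi=1$. By linearity of $\nabla$ in its first slot (property~1 applied with the constant function $g=0$), $\nabla_W V=\nabla_{\psi W}V=\psi\,\nabla_W V$. Evaluating at $x$ and using $\psi(x)=0$ yields $(\nabla_W V)(x)=0$, so $(\nabla_{U_1}V)(x)=(\nabla_{U_2}V)(x)$ by property~2.

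For pointwise dependence, pick any smooth local frame $\{E_1,\dots,E_n\}$ on an open neighborhood $\Omega'$ of $x$ (e.g.\ a coordinate frame). Let $\chi\in C^\infty(M)$ be a bump function with $\chi\equiv 1$ on some smaller neighborhood $\Omega''\subset\Omega'$ of $x$ and $\mathrm{supp}(\chi)\subset\Omega'$. Define globally smooth vector fields $\tilde E_i:=\chi E_i$ (extended by zero outside $\Omega'$); these satisfy $\tilde E_i=E_i$ on $\Omega''$. Given any $U\in\mathfrak{X}(M)$, write $U=\sum_i U^i E_i$ on $\Omega'$ for smooth coefficients $U^i\in C^\infty(\Omega')$, and set $\tilde U^i:=\chi U^i$ extended by zero, so $\tilde U^i\in C^\infty(M)$ and $\tilde U^i=U^i$ on $\Omega''$. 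Then $\sum_i \tilde U^i\tilde E_i$ is a smooth global vector field agreeing with $U$ on $\Omega''$. By locality (sub-claim~(i)) applied to $U$ and $\sum_i \tilde U^i \tilde E_i$,
\begin{equation*}
(\nabla_U V)(x)=\Bigl(\nabla_{\sum_i \tilde U^i \tilde E_i}V\Bigr)(x)=\sum_i \tilde U^i(x)\,(\nabla_{\tilde E_i}V)(x),
\end{equation*}
where the second equality uses property~1 inductively. Since $\tilde U^i(x)=U^i(x)$ and the numbers $U^i(x)$ are precisely the components of $U(x)$ in the basis $\{E_i(x)\}$ of $T_xM$, the right-hand side depends on $U$ only through $U(x)$.

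The main technical obstacle, though small, is the extension step: the defining properties 1--5 are stated for vector fields in $\mathfrak{X}(M)$, whereas a local frame is only defined on an open subset, so one must carefully introduce the cutoff $\chi$ and verify that the resulting globally smooth objects coincide with the original $U$ on a neighborhood of $x$ so that locality applies. Once that bookkeeping is done, the conclusion follows, and writing $\nabla_u V$ for $u\in T_xM$ is well defined: pick any $U\in\mathfrak{X}(M)$ with $U(x)=u$ (which exists, e.g.\ by extending $u$ in any local coordinate system via a bump function) and set $\nabla_u V:=(\nabla_U V)(x)$, independently of the choice of $U$.
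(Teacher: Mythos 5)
Your proof is correct: it is the standard tensoriality argument ($C^\infty(M)$-linearity in the first slot, locality via a bump function, then a local frame with cutoffs), and the bookkeeping for extending the local frame and coefficients to global fields is handled properly. The paper itself offers no proof of this proposition --- it is stated purely as a citation to \cite[Section~8.11]{boumal2020intromanifolds} --- so there is nothing to compare against; your argument is exactly what the cited textbook treatment amounts to. One trivial slip: at the end of the locality step, passing from $(\nabla_W V)(x)=0$ with $W=U_1-U_2$ to $(\nabla_{U_1}V)(x)=(\nabla_{U_2}V)(x)$ uses additivity of $\nabla$ in its \emph{first} argument (property~1 with constant coefficients), not property~2, which governs the second slot; this is consistent with your opening remark that property~1 is all you need.
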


For a smooth function $f:M\rightarrow\mathbb{R}$, $\grad f$ is a smooth vector field.

\begin{definition}
	\citep[Section~8.11]{boumal2020intromanifolds}
	The Riemannian Hessian of a smooth function $f$ on $M$ at $x\in M$ is a self-adjoint linear operator $\Hess f(x):T_xM\rightarrow T_xM$ defined as
	\[
	\Hess f(x)[u]=\nabla_u\grad f.
	\]
\end{definition}

\begin{proposition}
	\citep[Section~8.12]{boumal2020intromanifolds}
	Let $c:I\rightarrow M$ be a smooth curve. There exists a unique operator $D_t:\mathfrak{X}(c)\rightarrow\mathfrak{X}(c)$ satisfying the following properties for all $Y,Z\in\mathfrak{X}(c)$, $U\in\mathfrak{X}(M)$, a smooth function $g$ on $I$, and $a,b\in\mathbb{R}$:
	\begin{enumerate}
		\item $D_t(aY+bZ)=aD_tY+bD_tZ$
		\item $D_t(gZ)=g'Z+gD_tZ$
		\item $(D_t(U\circ c))(t)=\nabla_{c'(t)}U$ for all $t\in I$
		\item $\frac{d}{dt}\langle Y,Z\rangle=\langle D_tY,Z\rangle+\langle Y,D_tZ\rangle$.
	\end{enumerate}
	This operator is called the (induced) covariant derivative along the curve $c$.
\end{proposition}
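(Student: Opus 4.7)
The plan is to follow the classical two-step template for operators defined by a list of axioms: first use the four properties to \emph{force} a coordinate formula for $D_t$, yielding uniqueness, then \emph{define} $D_t$ by that formula in a chart and verify the axioms plus coordinate-independence, yielding existence.

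For uniqueness, fix $t_0\in I$ and a chart $(U,\varphi)$ around $c(t_0)$ with coordinate frame $\partial_1,\ldots,\partial_n$. On a subinterval $J\subseteq I$ with $c(J)\subseteq U$, any $Y\in\mathfrak{X}(c)$ can be written as $Y(t)=Y^i(t)\,\partial_i|_{c(t)}$ for smooth functions $Y^i:J\to\mathbb{R}$. Applying property (1) to decompose $Y=\sum_i Y^i(\partial_i\circ c)$, property (2) to pull out the scalar factors $Y^i$, and property (3) to identify $(D_t(\partial_i\circ c))(t)=\nabla_{c'(t)}\partial_i=\dot c^j(t)\,\Gamma^k_{ji}(c(t))\,\partial_k|_{c(t)}$, we obtain
\[
(D_tY)(t)=\bigl(\dot Y^k(t)+\Gamma^k_{ji}(c(t))\,\dot c^j(t)\,Y^i(t)\bigr)\,\partial_k|_{c(t)}. \tag{$\ast$}
\]
This forces $D_tY$ on $J$, so any two operators satisfying (1)--(3) agree locally, hence globally by covering $I$ with such subintervals.

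For existence, we \emph{define} $D_t$ on $J$ by the formula $(\ast)$ using any chart and verify the four properties. Linearity (1), the Leibniz rule (2), and compatibility with $\nabla$ on vector fields of the form $U\circ c$ as in (3) are immediate from $(\ast)$ by direct computation using smoothness of the $Y^i$ and the $\Gamma^k_{ji}$. Property (4), metric compatibility along $c$, reduces to showing $\frac{d}{dt}\langle\partial_i\circ c,\partial_j\circ c\rangle=\langle\nabla_{c'}\partial_i,\partial_j\rangle+\langle\partial_i,\nabla_{c'}\partial_j\rangle$, which follows from property (5) of the Levi-Civita connection applied pointwise along $c$. To globalize, we must check that formula $(\ast)$ gives the same vector at points in the overlap of two charts. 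This is precisely the coordinate-independence of $\nabla_{c'(t)}V$ at each $t$, which holds because $(\ast)$ was derived from axioms (1)--(3) that are intrinsic, so the resulting local operators must patch to a well-defined global $D_t:\mathfrak{X}(c)\to\mathfrak{X}(c)$.

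The main obstacle, and the only step that is not pure bookkeeping, is verifying that the locally-defined operators agree on chart overlaps. One can handle this either transparently, by invoking that property (3) already characterizes $D_t$ on vector fields of the form $U\circ c$ and that such fields span $\mathfrak{X}(c)$ fiberwise (so uniqueness forces agreement), or concretely, by tracking how the Christoffel symbols transform under a change of coordinates $\tilde x=\tilde x(x)$ and confirming that the non-tensorial pieces $\dot Y^k$ and $\Gamma^k_{ji}\dot c^jY^i$ combine into a genuine tangent vector at $c(t)$; the inhomogeneous terms cancel in the standard way. A subtle point worth stating explicitly is that $Y\in\mathfrak{X}(c)$ need not be the restriction of any global vector field on $M$ (for instance, when $c$ has self-intersections), which is why a definition purely via $\nabla_{c'(t)}$ on $\mathfrak{X}(M)$ is insufficient and the local-coordinate construction $(\ast)$ is essential.
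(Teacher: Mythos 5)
The paper does not actually prove this proposition: it appears in Appendix~A purely as background, with a citation to Boumal's textbook (Section~8.12), so there is no in-paper argument to compare against. Your proof is the standard existence-and-uniqueness argument for the induced covariant derivative found in the references the paper cites: derive the coordinate formula $(\ast)$ from axioms (1)--(3) to obtain uniqueness, define $D_t$ locally by $(\ast)$, verify the axioms, patch charts via uniqueness, and obtain property (4) from metric compatibility of the Levi--Civita connection. This is correct in structure, and your closing observation that a field along $c$ need not be the restriction of any global field (so property (3) alone cannot serve as a definition) is exactly the right point to emphasize.

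The one step you elide is the locality lemma. Properties (1) and (2) are stated for the operator acting on all of $\mathfrak{X}(c)$, whereas the frame decomposition $Y=\sum_i Y^i(\partial_i\circ c)$ holds only on the subinterval $J$, and the individual summands need not extend to smooth vector fields along all of $c$. To legitimately apply (1) and (2) to that decomposition you must first show that $(D_tY)(t_0)$ depends only on the restriction of $Y$ to an arbitrarily small neighborhood of $t_0$; this follows from (1) and (2) by the usual bump-function argument (if $Y$ vanishes near $t_0$, write $Y=gY$ with $g$ a cutoff vanishing at $t_0$ and apply the Leibniz rule). With that inserted, both the uniqueness computation and the patching argument are fully justified, and the proof is complete.
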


We define the acceleration of a smooth curve $\gamma$ as the vector field $D_t\gamma'$ along $\gamma$. Now, we can define the parallel transport using  covariant derivatives.

\begin{definition}
	\citep[Section~10.3]{boumal2020intromanifolds}
	A vector field $Z\in\mathfrak{X}(c)$ is parallel if $D_tZ=0$.
\end{definition}

\begin{proposition}
	\citep[Section~10.3]{boumal2020intromanifolds}
	For any smooth curve $c:I\rightarrow M$, $t_0\in I$ and $u\in T_{c(t_0)}M$, there exists a unique parallel vector field $Z\in\mathfrak{X}(c)$ such that $Z(t_0)=u$.
\end{proposition}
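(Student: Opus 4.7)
The plan is to reduce the condition $D_t Z = 0$ to a linear first-order ODE system in local coordinates, invoke the standard existence-uniqueness theorem for linear ODEs, and then glue local solutions together to obtain a parallel field on all of $I$.

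First I would work locally. Pick $t_0 \in I$ and choose a smooth chart $(U, \varphi)$ on $M$ with $c(t_0) \in U$, writing coordinates $(x^1, \ldots, x^n)$ with coordinate frame $\partial_1, \ldots, \partial_n$ and Christoffel symbols $\Gamma^k_{ij}$ defined by $\nabla_{\partial_i} \partial_j = \Gamma^k_{ij} \partial_k$. On the open subinterval $J \subseteq I$ containing $t_0$ with $c(J) \subseteq U$, any $Z \in \mathfrak{X}(c)$ can be written as $Z(t) = Z^k(t) \partial_k|_{c(t)}$ for smooth functions $Z^k: J \to \mathbb{R}$. Using the defining properties of the induced covariant derivative $D_t$ (linearity, the Leibniz rule, and compatibility with $\nabla$ via $D_t(\partial_k \circ c)(t) = \nabla_{c'(t)} \partial_k$), a direct computation yields
\begin{equation*}
D_t Z(t) = \left( \dot{Z}^k(t) + \Gamma^k_{ij}(c(t))\, \dot{c}^i(t)\, Z^j(t) \right) \partial_k|_{c(t)}.
\end{equation*}
Thus $D_t Z \equiv 0$ on $J$ is equivalent to the linear system $\dot{Z}^k = -\Gamma^k_{ij}(c(t))\, \dot{c}^i(t)\, Z^j$, whose coefficients are smooth in $t$. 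By the standard existence-uniqueness theorem for linear ODEs with smooth time-dependent coefficients, this system has a unique solution on all of $J$ with the prescribed initial data $Z^k(t_0) = u^k$, where $u = u^k \partial_k|_{c(t_0)}$. This gives local existence and uniqueness of a parallel field with $Z(t_0) = u$.

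Next I would extend globally by a connectedness/gluing argument. Let $S \subseteq I$ be the set of $t_1 \in I$ such that a parallel field with initial value $u$ at $t_0$ exists on the closed interval between $t_0$ and $t_1$. Clearly $t_0 \in S$. For each point in the image $c(I)$, choose a coordinate chart; given $t_1 \in S$, the local result applied at $t_1$ (with the initial value equal to the already-constructed $Z(t_1)$) produces a parallel extension on a neighborhood of $t_1$, so $S$ is open in $I$. Closedness follows because the local ODE is linear: on any compact subinterval the Christoffel coefficients are bounded, so the solution does not blow up and extends continuously to the endpoint, where the local result again applies. Hence $S = I$, giving existence on all of $I$. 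Uniqueness on $I$ follows because any two parallel fields $Z_1, Z_2$ along $c$ with $Z_1(t_0) = Z_2(t_0)$ satisfy the same linear ODE on each chart (by the equivalence above), so they coincide on a neighborhood of $t_0$, and a standard open-closed argument shows the set where $Z_1 = Z_2$ equals $I$.

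The only delicate step is the gluing/compactness argument: one must verify that the local solutions patched together really satisfy $D_t Z = 0$ on all of $I$, which requires that the coordinate expression of $D_t$ used on each chart agrees with the intrinsic definition, and that uniqueness on overlapping charts forces the patched field to be well-defined and smooth. This is handled by noting that $D_t Z$ is intrinsically defined (independent of chart) and that the parallel condition is equivalent, in every chart, to the same ODE; so on overlaps the two local solutions satisfy a common linear ODE with the same initial data and must agree. Linearity of the ODE also ensures there is no finite-time blow-up, giving global existence on the full interval $I$ without further hypotheses.
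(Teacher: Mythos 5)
Your proof is correct. Note that the paper does not prove this proposition at all: it is stated as a background fact imported from \cite[Section~10.3]{boumal2020intromanifolds}, so there is no in-paper argument to compare against. What you wrote is the standard textbook proof (reduction of $D_tZ=0$ to a linear first-order ODE in a chart, existence-uniqueness for linear systems, and an open-closed gluing over $I$), which is essentially the argument in the cited reference; the coordinate formula for $D_tZ$, the linearity ensuring no finite-time blow-up, and the chart-independence of the parallel condition are all handled correctly. Since $I=[0,1]$ is compact here, you could alternatively replace the open-closed argument with a finite subdivision of $I$ into subintervals each mapped into a single chart, but both routes are standard and equivalent.
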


\begin{definition}
	\citep[Section~10.3]{boumal2020intromanifolds}
	Given a smooth curve $c$ on $M$, the parallel transport of tangent vectors at $c(t_0)$ to the tangent space at $c(t_1)$ along $c$,
	\[
	\Gamma(c)_{t_0}^{t_1}:T_{c(t_0)}M\rightarrow T_{c(t_1)}M,
	\]
	is defined by $\Gamma(c)_{t_0}^{t_1}(u)=Z(t_1)$, where $Z\in\mathfrak{X}(c)$ is the unique parallel vector field such that $Z(t_0)=u$.
\end{definition}

\begin{proposition}
	\citep[Section~10.3]{boumal2020intromanifolds}
	The parallel transport operator $\Gamma(c)_{t_0}^{t_1}$ is linear. Also, $\Gamma(c)_{t_1}^{t_2}\circ\Gamma(c)_{t_0}^{t_1}=\Gamma(c)_{t_0}^{t_2}$ and $\Gamma(c)_{t}^{t}$ is the identity. In particular, the inverse of $\Gamma(c)_{t_0}^{t_1}$ is $\Gamma(c)_{t_1}^{t_0}$. The parallel transport is an isometry, that is,
	\[
	\langle u,v\rangle_{c\left(t_0\right)}=\left\langle \Gamma(c)_{t_0}^{t_1}(u),\Gamma(c)_{t_0}^{t_1}(v)\right\rangle_{c\left(t_1\right)}.
	\]
\end{proposition}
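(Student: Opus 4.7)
The plan is to derive all four properties directly from two facts already established in the excerpt: (i) the existence and uniqueness of parallel extensions (the proposition just above the statement), and (ii) the four defining properties of the induced covariant derivative $D_t$, especially linearity ($D_t(aY+bZ)=aD_tY+bD_tZ$) and metric compatibility ($\tfrac{d}{dt}\langle Y,Z\rangle=\langle D_tY,Z\rangle+\langle Y,D_tZ\rangle$).

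First I would prove linearity. Given $u_1,u_2\in T_{c(t_0)}M$ and $a,b\in\mathbb{R}$, let $Z_1,Z_2\in\mathfrak{X}(c)$ be the unique parallel fields with $Z_i(t_0)=u_i$. By linearity of $D_t$, $D_t(aZ_1+bZ_2)=aD_tZ_1+bD_tZ_2=0$, so $aZ_1+bZ_2$ is parallel along $c$, and it takes the value $au_1+bu_2$ at $t_0$. By the uniqueness part of the preceding proposition, it must be the parallel extension of $au_1+bu_2$, which evaluated at $t_1$ gives the desired identity $\Gamma(c)_{t_0}^{t_1}(au_1+bu_2)=a\Gamma(c)_{t_0}^{t_1}(u_1)+b\Gamma(c)_{t_0}^{t_1}(u_2)$.

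Next I would handle the identity and composition claims, which are essentially tautological once the definition is unwound. For the identity statement, if $Z$ is the parallel field with $Z(t)=u$, then by definition $\Gamma(c)_t^t(u)=Z(t)=u$. For composition, let $u\in T_{c(t_0)}M$ and let $Z$ be the unique parallel field with $Z(t_0)=u$. Then $\Gamma(c)_{t_0}^{t_1}(u)=Z(t_1)$, and the same $Z$, viewed as a parallel field with initial data $Z(t_1)$ at time $t_1$, is by uniqueness the parallel extension used to define $\Gamma(c)_{t_1}^{t_2}(Z(t_1))$; hence $\Gamma(c)_{t_1}^{t_2}\circ\Gamma(c)_{t_0}^{t_1}(u)=Z(t_2)=\Gamma(c)_{t_0}^{t_2}(u)$. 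Taking $t_2=t_0$ in the composition law and combining with the identity statement yields $\Gamma(c)_{t_1}^{t_0}\circ\Gamma(c)_{t_0}^{t_1}=\mathrm{id}$ and symmetrically the other direction, so $\Gamma(c)_{t_1}^{t_0}=(\Gamma(c)_{t_0}^{t_1})^{-1}$.

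Finally I would prove the isometry property using metric compatibility of $D_t$. For $u,v\in T_{c(t_0)}M$, let $Y,Z$ be their parallel extensions along $c$, so that $D_tY\equiv 0$ and $D_tZ\equiv 0$. Then property 4 of the induced covariant derivative gives
\[
\frac{d}{dt}\langle Y(t),Z(t)\rangle_{c(t)}=\langle D_tY,Z\rangle+\langle Y,D_tZ\rangle=0,
\]
so $t\mapsto\langle Y(t),Z(t)\rangle_{c(t)}$ is constant on $I$. Evaluating at $t_0$ and $t_1$ yields $\langle u,v\rangle_{c(t_0)}=\langle\Gamma(c)_{t_0}^{t_1}(u),\Gamma(c)_{t_0}^{t_1}(v)\rangle_{c(t_1)}$. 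I do not anticipate a genuine obstacle here: every claim follows quickly from the uniqueness of parallel extensions together with the algebraic properties of $D_t$, so the main task is only to keep the appeals to those two inputs clearly separated.
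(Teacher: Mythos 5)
Your proof is correct, and it is the standard derivation: the paper itself gives no proof of this proposition (it is quoted as background from \cite[Section~10.3]{boumal2020intromanifolds}), and your argument — linearity and composition from uniqueness of parallel extensions, invertibility by taking $t_2=t_0$, and isometry from metric compatibility of $D_t$ applied to two parallel fields — is exactly how the textbook establishes these facts. No gaps.
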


\begin{proposition}
	\label{prop:parallel}
	\citep[Section~10.3]{boumal2020intromanifolds}
	Consider a smooth curve $c:I\rightarrow M$. Given a vector field $Z\in\mathfrak{X}(c)$, we have
	\[
	D_tZ(t)=\lim_{h\rightarrow 0}\frac{\Gamma(c)_{t+h}^tZ(t+h)-Z(t)}{h}.
	\]
\end{proposition}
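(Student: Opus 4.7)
}
The plan is to reduce the statement to an elementary calculation in a parallel frame along $c$, exploiting the fact that parallel transport trivializes the covariant derivative. Fix $t\in I$ and choose any basis $u_1,\ldots,u_n$ of $T_{c(t)}M$. Define vector fields $E_i\in\mathfrak{X}(c)$ by parallel transport along $c$:
\[
E_i(s) = \Gamma(c)_{t}^{s}(u_i), \quad i=1,\ldots,n.
\]
By the existence and uniqueness of parallel vector fields along $c$, each $E_i$ is well-defined, smooth, and satisfies $D_tE_i\equiv 0$. Since parallel transport is a linear isometry and $\{u_i\}$ is a basis, $\{E_i(s)\}$ is a basis of $T_{c(s)}M$ for every $s$, so we may write
\[
Z(s)=\sum_{i=1}^{n} Z^i(s)\,E_i(s)
\]
for smooth coefficient functions $Z^i:I\to\mathbb{R}$ (smoothness follows from the smoothness of $Z$ and of the frame $\{E_i\}$).

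Next, I would compute both sides in this frame. For the left-hand side, I apply the axioms for $D_t$ (linearity, the Leibniz rule for scalar multiplication, and $D_tE_i=0$) to obtain
\[
D_tZ(t) = \sum_{i=1}^{n}\bigl((Z^i)'(t)\,E_i(t) + Z^i(t)\,D_tE_i(t)\bigr) = \sum_{i=1}^{n}(Z^i)'(t)\,E_i(t).
\]
For the right-hand side, I use that parallel transport is linear and that it maps the parallel frame to itself via the group property $\Gamma(c)_{t+h}^{t}\circ\Gamma(c)_{t}^{t+h}=\mathrm{id}$; hence $\Gamma(c)_{t+h}^{t}\bigl(E_i(t+h)\bigr)=E_i(t)$. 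Therefore
\[
\Gamma(c)_{t+h}^{t}Z(t+h) = \sum_{i=1}^{n} Z^i(t+h)\,\Gamma(c)_{t+h}^{t}\bigl(E_i(t+h)\bigr) = \sum_{i=1}^{n} Z^i(t+h)\,E_i(t),
\]
so the difference quotient equals
\[
\frac{\Gamma(c)_{t+h}^{t}Z(t+h)-Z(t)}{h} = \sum_{i=1}^{n}\frac{Z^i(t+h)-Z^i(t)}{h}\,E_i(t).
\]
Letting $h\to 0$ and using smoothness of each $Z^i$, this tends to $\sum_i (Z^i)'(t)E_i(t)$, which matches the expression for $D_tZ(t)$ derived above.

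I expect the only real subtlety to be justifying that the frame $\{E_i\}$ is smooth (so that the coordinate functions $Z^i$ are smooth, legitimizing both the axiomatic computation of $D_tZ$ and the elementary limit in the last step); this follows from smooth dependence of ODE solutions on initial data applied to the parallel-transport equation $D_tE=0$ with initial condition $E(t)=u_i$. Once smoothness is in hand, the remainder is purely algebraic manipulation using the defining properties of $D_t$ and of $\Gamma(c)$ already recorded in the preceding propositions, and no analytic estimates beyond the ordinary difference-quotient limit for real-valued smooth functions are needed.
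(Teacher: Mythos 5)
Your argument is correct: expressing $Z$ in a parallel frame $\{E_i\}$, using the Leibniz rule together with $D_tE_i=0$ for the left-hand side, and using $\Gamma(c)_{t+h}^{t}\circ\Gamma(c)_{t}^{t+h}=\mathrm{id}$ to reduce the right-hand side to an ordinary difference quotient of the smooth coefficients $Z^i$ is exactly the standard proof of this characterization. Note that the paper itself does not prove this proposition at all---it is quoted from \cite[Section~10.3]{boumal2020intromanifolds} as background---so there is no in-paper argument to compare against; your proof fills that in correctly, and your identified subtlety (smoothness of the parallel frame, hence of the $Z^i$, via smooth dependence of the linear parallel-transport ODE on its data) is indeed the only point requiring care.
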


\section{Comparison between RNAG-C and High-Friction NAG-C}
\label{app:su}

In this section, we review high-friction NAG-C in \citep[Section~4.1]{su2014}, and compare it to RNAG-C. For $r\geq3$, they designed the gerenalized NAG-C with high friction as
\begin{align*}
	x_{k} & =y_{k-1}-s\grad f\left(y_{k-1}\right)\\
	y_{k} & =x_{k}+\frac{k-1}{k+r-1}\left(x_{k}-x_{k-1}\right).
\end{align*}
Introducing the third sequence as $z_{k}=y_{k}+\frac{k}{r-1}\left(y_{k}-x_{k}\right)$, we can rewrite this method as
\begin{equation}
	\tag{NAG-C-HF}
	\label{eq:nag-c-hf}
	\begin{aligned}
		y_{k} & =x_{k}+\frac{r-1}{k+r-1}\left(z_{k}-x_{k}\right)\\
		x_{k+1} & =y_{k}-s\grad f\left(y_{k}\right)\\
		z_{k+1} & =z_{k}-\frac{k+r-1}{r-1}s\grad f\left(y_{k}\right).
	\end{aligned}
\end{equation}
Note that we can recover NAG-C by letting $r=3$. The iterates of \ref{eq:nag-c-hf} satisfy
\[
f\left(x_{k}\right)-f\left(x^{*}\right)\leq\frac{(r-1)^{2}\left\Vert x_{0}-x^{*}\right\Vert ^{2}}{2s(k+r-2)^{2}}\leq\frac{(r-1)^{2}\left\Vert x_{0}-x^{*}\right\Vert ^{2}}{2s(k-2)^{2}}
\]
for $s\leq\frac{1}{L}$ \citep[Theorem~6]{su2014}. Thus, we have $f\left(x_k\right)-f\left(x^{*}\right)\leq\epsilon$ whenever
\[
(k-2)^{2}\geq\frac{(r-1)^{2}\left\Vert x_{0}-x^{*}\right\Vert ^{2}}{2s\epsilon}.
\]
In particular, when $s=\frac{1}{L}$ and $r=1+2\xi$, we have the Iteration complexity $O\left(\xi\sqrt{\frac{L}{\epsilon}}\right)$.

For comparison, we write RNAG-C in Euclidean space as
\begin{equation}
	\label{eq:rnag-c-euc}
	\begin{aligned}
		y_{k} & =x_{k}+\frac{2\xi}{k+2\xi+(T+2\xi-2)}\left(z_{k}-x_{k}\right)\\
		x_{k+1} & =y_{k}-s\grad f\left(y_{k}\right)\\
		z_{k+1} & =z_{k}-\frac{k+2\xi+T}{2\xi}s\grad f\left(y_{k}\right).
	\end{aligned}
\end{equation}
One can see that the algorithm \eqref{eq:rnag-c-euc} is similar to that of \ref{eq:nag-c-hf} with $r=1+2\xi$, where the only difference occurs in constants that can be ignored as $k$ grows. Note that both algorithms have the same iteration complexity $O\left(\xi\sqrt{\frac{L}{\epsilon}}\right)$ even when we do not ignore the effect of $\xi$, and lead to the same ODE \citep[Section~4.1]{su2014}
\[
\ddot{y}+\frac{1+2\xi}{t}\dot{y}+\grad f(y)=0.
\]

\section{Proofs of \cref{lem:distortion1} and \cref{lem:distortion2}}
\label{app:distortion}

\begin{proposition}
	\label{prop:alimisis2}
	\citep[Lemma~12]{alimisis2020continuous}
	Let $\gamma$ be a smooth curve whose image is in $N$, then
	\[
	\frac{d}{dt}\left\Vert \log_{\gamma(t)}(x)\right\Vert ^{2}=2\left\langle D_{t}\log_{\gamma(t)}(x),\log_{\gamma(t)}(x)\right\rangle =2\left\langle \log_{\gamma(t)}(x),-\gamma'(t)\right\rangle .
	\]
\end{proposition}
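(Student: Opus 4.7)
The plan is to prove the two equalities separately, using two standard ingredients: the product rule for the induced covariant derivative $D_t$, and the classical fact that the Riemannian gradient of the squared distance to $x$ equals $-\log_p(x)$.

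For the first equality, I would apply property~(4) of the induced covariant derivative along a curve (stated in the appendix) to the vector field $t \mapsto \log_{\gamma(t)}(x)$ paired with itself, obtaining
\[
\frac{d}{dt}\|\log_{\gamma(t)}(x)\|^{2} = 2\bigl\langle D_t \log_{\gamma(t)}(x),\, \log_{\gamma(t)}(x)\bigr\rangle
\]
by the symmetry of the inner product. No curvature or exponential-map computation is required at this stage.

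For the second equality, the key input is the identity $\grad_p F = -\log_p(x)$ on $N$, where $F(p) := \tfrac{1}{2}d(p,x)^{2} = \tfrac{1}{2}\|\log_p(x)\|^{2}$. Granting this, the chain rule gives
\[
\frac{d}{dt}\bigl(\tfrac{1}{2}\|\log_{\gamma(t)}(x)\|^{2}\bigr) = \langle \grad_{\gamma(t)}F,\, \gamma'(t)\rangle = \langle -\log_{\gamma(t)}(x),\, \gamma'(t)\rangle = \langle \log_{\gamma(t)}(x),\, -\gamma'(t)\rangle,
\]
which, combined with the first equality divided by $2$, closes the proof.

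The only nontrivial step is verifying $\grad_p F = -\log_p(x)$, which I would do via the first variation of energy. Fix $p \in N$ and $v \in T_pM$, let $c(s) = \exp_p(sv)$, and consider the geodesic variation $\alpha(s,t) = \exp_{c(s)}(t\log_{c(s)}(x))$, so that for each $s$ near $0$ the curve $t \mapsto \alpha(s,t)$ is the minimizing geodesic from $c(s)$ to $x$. Since constant-speed geodesics realize the squared-length functional up to the factor $\tfrac{1}{2}$, one has $F(c(s)) = \tfrac{1}{2}\int_0^1 \|\partial_t\alpha(s,t)\|^{2}\,dt$, and the standard first variation formula for energy yields
\[
dF(p)[v] = \langle \partial_t\alpha(0,1),\, W(1)\rangle - \langle \partial_t\alpha(0,0),\, W(0)\rangle,
\]
where $W(t) := \partial_s\alpha(s,t)\big|_{s=0}$. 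The boundary condition $\alpha(s,1) \equiv x$ forces $W(1) = 0$, while $W(0) = v$ and $\partial_t\alpha(0,0) = \log_p(x)$, giving $dF(p)[v] = -\langle \log_p(x), v\rangle$ as required. This first-variation computation is the main (but routine) obstacle; everything else is algebraic manipulation of the resulting identities.
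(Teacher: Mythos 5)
The paper does not prove \cref{prop:alimisis2}; it is cited directly from \cite[Lemma~12]{alimisis2020continuous}, so there is no in-paper argument to compare against. Your proof is correct and self-contained. The first equality is immediate from property~(4) of the induced covariant derivative $D_t$ listed in \cref{app:background}, applied with $Y=Z=\log_{\gamma(\cdot)}(x)$. The second equality reduces, after halving and applying the chain rule, to the radial gradient identity $\grad_p\bigl(\tfrac{1}{2}\Vert\log_p(x)\Vert^2\bigr)=-\log_p(x)$, and your first-variation-of-energy computation of that identity is sound: the base curve $t\mapsto\alpha(0,t)$ of your variation is the geodesic from $p$ to $x$, so the bulk integral in the first variation formula vanishes and only the endpoint terms survive, giving $dF(p)[v]=-\langle\log_p(x),v\rangle$. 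One small simplification worth noting: you do not actually need $d(p,x)=\Vert\log_p(x)\Vert$ or any global-minimality claim. Since each $t\mapsto\alpha(s,t)$ is a constant-speed geodesic on $[0,1]$ with initial velocity $\log_{c(s)}(x)$, its energy equals $\tfrac12\Vert\log_{c(s)}(x)\Vert^2$ automatically, which is precisely the quantity the proposition concerns; this sidesteps any discussion of whether geodesics contained in $N$ are globally length-minimizing in $M$. The Gauss lemma (radial isometry of $\exp_p$) offers an alternative, non-variational route to the same radial gradient identity if one prefers to avoid the first-variation computation.
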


\distortiona*

\begin{proof}
	By geodesic unique convexity of $N$, there is a unique geodesic $\gamma$ such that $\gamma(0)=p_{A}$ and $\gamma(r)=p_{B}$ whose image lies in $N$. We can check that $\gamma'(0)=v_A$.\footnote{Consider the geodesic $c:t\mapsto\gamma(rt)$. Then $c(0)=p_A$ and $c(1)=p_B$. By definition of the exponential map, $c'(0)=\log_{p_A}\left(p_B\right)=rv_A$. Combining this equality with $c'(0)=r\gamma'(0)$ gives the desired result.} Define the vector field $V(t)$ along $\gamma$ as $V(t)=\Gamma(\gamma)_{0}^{t}\left(v_{A}-t\gamma'(0)\right)$. Then, we can check that $V(t)=(1-t)\gamma'(t)$ and $V'(t)=-\gamma'(t)$.\footnote{A similar argument as in the previous footnote shows the first equality. The second equality follows from \cref{prop:parallel} and the fact that $\gamma'(t)$ is parallel along $\gamma$.} Define the function $w:[0,r]\rightarrow\mathbb{R}$ as $w(t)=\left\Vert \log_{\gamma(t)}(x)-V(t)\right\Vert ^{2}$, It follows from \cref{prop:alimisis} and \cref{prop:alimisis2} that
	\begin{align*}
		\frac{d}{dt}w(t) & =2\left\langle D_{t}\left(\log_{\gamma(t)}(x)-V(t)\right),\log_{\gamma(t)}(x)-V(t)\right\rangle \\
		& =2\left\langle D_{t}\log_{\gamma(t)}(x),\log_{\gamma(t)}(x)\right\rangle -2\left\langle D_{t}\log_{\gamma(t)}(x),V(t)\right\rangle -2\left\langle D_{t}V(t),\log_{\gamma(t)}(x)\right\rangle +2\left\langle D_{t}V(t),V(t)\right\rangle \\
		& =2\left\langle D_{t}\log_{\gamma(t)}(x),\log_{\gamma(t)}(x)\right\rangle -2(1-t)\left\langle D_{t}\log_{\gamma(t)}(x),\gamma'(t)\right\rangle +2\left\langle \gamma'(t),\log_{\gamma(t)}(x)\right\rangle +2\left\langle D_{t}V(t),V(t)\right\rangle \\
		& =2(1-t)\left\langle D_{t}\log_{\gamma(t)}(x),-\gamma'(t)\right\rangle +2\left\langle D_{t}V(t),V(t)\right\rangle \\
		& \leq2(1-t)\zeta\left\Vert \gamma'(t)\right\Vert ^{2}+2\left\langle D_{t}V(t),V(t)\right\rangle \\
		& =-2\zeta\left\langle -\gamma'(t),(1-t)\gamma'(t)\right\rangle +2\left\langle D_{t}V(t),V(t)\right\rangle \\
		& =-2(\zeta-1)\left\langle D_{t}V(t),V(t)\right\rangle \\
		& =-(\zeta-1)\left(\frac{d}{dt}\left\Vert V(t)\right\Vert ^2\right).
	\end{align*}
	Integrating both sides from $0$ to $r$ gives
	\[
	w(r)-w(0)\leq\int_{0}^{r}-(\zeta-1)\left(\frac{d}{dt}\left\Vert V(t)\right\Vert ^2\right)\,dt =-(\zeta-1)\left(\left\Vert V(r)\right\Vert ^{2}-\left\Vert V(0)\right\Vert ^{2}\right).
	\]
	This completes the proof.
\end{proof}

\distortionb*

\begin{proof}
	Define $\gamma$, $V$, $w$ as in the proof of \cref{lem:distortion1}. As in the proof of \cref{lem:distortion1}, we can check that $\gamma'(0)=b$ and $V'(t)=-\gamma'(t)$, and that we have
	\[
	\frac{d}{dt}w(t)=-2\left\langle D_{t}\log_{\gamma(t)}(x),V(t)\right\rangle +2\left\langle D_{t}V(t),V(t)\right\rangle .
	\]
	Consider the smooth function $f_0:p\mapsto\frac{1}{2}\left\Vert \log_p(x)\right\Vert^2$. Because $\grad f_0(p)=-\log_p(x)$, we have $\Hess f_0(\gamma(t))[w]=\nabla_wX$, where $X:p\mapsto-\log_p(x)$ \citep[Section~4]{alimisis2020continuous}.  By \cref{prop:alimisis}, we have $\delta\left\Vert w\right\Vert ^{2}\leq\left\langle \Hess f_0(\gamma(t))[w],w\right\rangle \leq\zeta\left\Vert w\right\Vert ^{2}\leq\xi\left\Vert w\right\Vert ^{2}$ \citep[Appendix~D]{alimisis2021momentum}. Thus,
	\[
	-\frac{\xi-\delta}{2}\left\Vert w\right\Vert ^{2}=\delta\left\Vert w\right\Vert ^{2}-\frac{\xi+\delta}{2}\left\Vert w\right\Vert ^{2}\leq\left\langle \Hess f_0(\gamma(t))[w]-\frac{\xi+\delta}{2}w,w\right\rangle \leq\xi\left\Vert w\right\Vert ^{2}-\frac{\xi+\delta}{2}\left\Vert w\right\Vert ^{2}=\frac{\xi-\delta}{2}\left\Vert w\right\Vert ^{2}.
	\]
	Because $\Hess f_0(\gamma(t))$ is self-adjoint, it is diagonalizable. Thus,
	the norm of the operator $\Hess f_0(\gamma(t))-\frac{\xi+\delta}{2}I$ on $T_{\gamma(t)}M$ can be bounded as
	\[
	\left\Vert \Hess f_0(\gamma(t))-\frac{\xi+\delta}{2}I\right\Vert \leq\frac{\xi-\delta}{2}.
	\]
	Now, we have
	\begin{align*}
		-2\left\langle D_{t}\log_{\gamma(t)}(x),V(t)\right\rangle  & =2\left\langle \nabla_{\gamma'(t)}X,V(t)\right\rangle \\
		& =2\left\langle \Hess f_0(\gamma(t))[\gamma'(t)],V(t)\right\rangle \\
		& =2\left\langle \left(\Hess f_0(\gamma(t))-\frac{\xi+\delta}{2}I\right)\left(\gamma'(t)\right),V(t)\right\rangle +2\left\langle \frac{\xi+\delta}{2}\gamma'(t),V(t)\right\rangle \\
		& \leq2\left\Vert \left(\Hess f_0(\gamma(t))-\frac{\xi+\delta}{2}I\right)\left(\gamma'(t)\right)\right\Vert \left\Vert V(t)\right\Vert +2\left\langle \frac{\xi+\delta}{2}\gamma'(t),V(t)\right\rangle \\
		& \leq2\left\Vert \Hess f_0(\gamma(t))-\frac{\xi+\delta}{2}I\right\Vert \left\Vert \gamma'(t)\right\Vert \left\Vert V(t)\right\Vert +2\left\langle \frac{\xi+\delta}{2}\gamma'(t),V(t)\right\rangle \\
		& \leq2\frac{\xi-\delta}{2}\left\Vert \gamma'(t)\right\Vert \left\Vert V(t)\right\Vert +2\left\langle \frac{\xi+\delta}{2}\gamma'(t),V(t)\right\rangle.
	\end{align*}
	Because the parallel transport preserves inner product and norm, we obtain
	\begin{align*}
		-2\left\langle D_{t}\log_{\gamma(t)}(x),V(t)\right\rangle & \leq2\frac{\xi-\delta}{2}\left\Vert b\right\Vert \left\Vert a+(1-t)b\right\Vert +(\xi+\delta)\left\langle b,a+(1-t)b\right\rangle \\
		& =\frac{\xi-\delta}{2}\frac{1}{1-t}2\left\Vert (1-t)b\right\Vert \left\Vert a+(1-t)b\right\Vert +(\xi+\delta)\left\langle b,a+(1-t)b\right\rangle \\
		& \leq\frac{\xi-\delta}{2}\frac{1}{1-t}\left(\left\Vert (1-t)b\right\Vert ^{2}+\left\Vert a+(1-t)b\right\Vert ^{2}\right)+(\xi+\delta)\left\langle b,a+(1-t)b\right\rangle \\
		& =\frac{\xi-\delta}{2}\frac{1}{1-t}\left\Vert a\right\Vert ^{2}-2\xi\left\langle -b,a+(1-t)b\right\rangle \\
		& =\frac{\xi-\delta}{2}\frac{1}{1-t}\left\Vert a\right\Vert ^{2}-2\xi\left\langle D_{t}V(t),V(t)\right\rangle.
	\end{align*}
	Thus, for $t\in (0,r)$
	\[
	\frac{d}{dt}w(t)\leq\frac{\xi-\delta}{2}\frac{1}{1-r}\left\Vert a\right\Vert ^{2}-2(\xi-1)\left\langle D_{t}V(t),V(t)\right\rangle .
	\]
	Integrating both sides from $0$ to $r$, the result follows.
\end{proof}

\section{Convergence Analysis for RGD}
\label{app:rgd}

In this section, we review the iteration complexity of \ref{eq:rgd} with the fixed step size $\gamma_k=s$ under the assumptions in \cref{sec:assumption}. The results in this section correspond to \citep[Theorems~13 and 15]{zhang2016first}.

\subsection{Geodesically convex case}

We define the potential function as
\[
\phi_{k}=s(k+\zeta-1)\left(f\left(x_{k}\right)-f\left(x^{*}\right)\right)+\frac{1}{2}\left\Vert \log_{x_{k}}\left(x^{*}\right)\right\Vert ^{2}.
\]
The following theorem says that $\phi_k$ is decreasing.

\begin{theorem}
	\label{thm:rgd-c}
	Let $f$ be a geodesically convex and geodesically $L$-smooth function. If $s\leq\frac{1}{L}$, then the iterates of \ref{eq:rgd} satisfy
	\[
	s(k+\zeta)\left(f\left(x_{k+1}\right)-f\left(x^{*}\right)\right)+\frac{1}{2}\left\Vert \log_{x_{k+1}}\left(x^{*}\right)\right\Vert ^{2}\leq s(k+\zeta-1)\left(f\left(x_{k}\right)-f\left(x^{*}\right)\right)+\frac{1}{2}\left\Vert \log_{x_{k}}\left(x^{*}\right)\right\Vert ^{2}
	\]
	for all $k\geq0$.
\end{theorem}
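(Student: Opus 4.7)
The plan is to mimic the standard Euclidean potential-function proof for gradient descent, but with the metric distortion correction of \cref{lem:distortion1} replacing the naive ``expansion of a square'' step that works trivially in $\mathbb{R}^n$. The three ingredients I will combine are (i) $L$-smoothness, to get a per-step descent bound on $f$; (ii) \cref{lem:distortion1}, to control the change in $\tfrac{1}{2}\|\log_{x_k}(x^*)\|^2$; and (iii) g-convexity, to convert the cross term into a function-value gap.

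First I would apply $L$-smoothness to the update $x_{k+1}=\exp_{x_k}(-s\,\grad f(x_k))$. Since $s\leq 1/L$, this yields the standard descent inequality
\[
f(x_{k+1})\leq f(x_k)-\tfrac{s}{2}\|\grad f(x_k)\|^2,
\]
so in particular $s\|\grad f(x_k)\|^2\leq 2\bigl(f(x_k)-f(x_{k+1})\bigr)$, and the sequence $f(x_k)$ is monotonically decreasing.

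Next I would use \cref{lem:distortion1} with $p_A=x_k$, $p_B=x_{k+1}$, $x=x^*$, $v_A=-s\,\grad f(x_k)=\log_{x_k}(x_{k+1})$ and $r=1$. Since $v_B=\Gamma_{x_k}^{x_{k+1}}(v_A-\log_{x_k}(x_{k+1}))=0$, the lemma collapses to
\[
\|\log_{x_{k+1}}(x^*)\|^2\leq \|{-s\,\grad f(x_k)-\log_{x_k}(x^*)}\|^2+(\zeta-1)s^2\|\grad f(x_k)\|^2.
\]
Expanding the square in the tangent space $T_{x_k}M$, this becomes
\[
\|\log_{x_{k+1}}(x^*)\|^2\leq\|\log_{x_k}(x^*)\|^2+2s\langle \grad f(x_k),\log_{x_k}(x^*)\rangle+\zeta s^2\|\grad f(x_k)\|^2.
\]
By g-convexity, $\langle \grad f(x_k),\log_{x_k}(x^*)\rangle\leq f(x^*)-f(x_k)$, and by the descent bound, $\zeta s^2\|\grad f(x_k)\|^2\leq 2\zeta s(f(x_k)-f(x_{k+1}))$. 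Substituting and dividing by $2$ gives
\[
\tfrac{1}{2}\|\log_{x_{k+1}}(x^*)\|^2 - \tfrac{1}{2}\|\log_{x_k}(x^*)\|^2 \leq -s\bigl(f(x_k)-f(x^*)\bigr)+\zeta s\bigl(f(x_k)-f(x_{k+1})\bigr).
\]

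Finally, I would form the difference $\phi_k-\phi_{k+1}$ and plug in the inequality above. After rearranging the coefficients of $f(x_k)-f(x^*)$ and $f(x_{k+1})-f(x^*)$, the $\zeta$ terms cancel neatly and what remains is $sk\bigl(f(x_k)-f(x_{k+1})\bigr)\geq 0$, which establishes $\phi_{k+1}\leq \phi_k$. The only slightly delicate point is the bookkeeping in this final cancellation: the role of the shift $\zeta-1$ in the potential is exactly to absorb the $\zeta s^2\|\grad f\|^2$ penalty introduced by \cref{lem:distortion1}, so the choice of potential and the distortion constant must be matched. This is the main (and essentially only) nontrivial step; everything else is an application of smoothness and g-convexity as in the Euclidean case.
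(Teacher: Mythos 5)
Your proof is correct and uses exactly the same three ingredients as the paper's: geodesic $L$-smoothness of $f$, geodesic convexity of $f$ at $x_k$, and \cref{lem:distortion1} applied with $p_A=x_k$, $p_B=x_{k+1}$, $v_A=\log_{x_k}(x_{k+1})$, $v_B=0$, $r=1$. The only organizational difference is that the paper takes a single weighted sum of the convexity and smoothness inequalities (throwing away slack via $k+\zeta\geq\zeta$) and then applies the distortion lemma at the end, whereas you apply the distortion lemma first, expand the square, and then bound the remaining terms via convexity and the descent inequality (throwing away slack via $sk\,(f(x_{k+1})-f(x_k))\leq 0$) --- but this is the same decomposition reorganized, not a genuinely different route.
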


\begin{proof}
	(Step 1).
	In this step, $\langle\cdot,\cdot\rangle$ and $\Vert\cdot\Vert$ always denote the inner product and the norm on $T_{x_k}M$. It follows from the geodesic convexity of $f$ that
	\begin{align*}
		f\left(x^{*}\right) & \geq f\left(x_{k}\right)+\left\langle \grad f\left(x_{k}\right),\log_{x_{k}}\left(x^{*}\right)\right\rangle \\
		& =f\left(x_{k}\right)-\frac{1}{s}\left\langle \log_{x_{k}}\left(x_{k+1}\right),\log_{x_{k}}\left(x^{*}\right)\right\rangle .
	\end{align*}
	By the geodesic $\frac{1}{s}$-smoothness of $f$, we have
	\begin{align*}
		f\left(x_{k+1}\right) & \leq f\left(x_{k}\right)+\left\langle \grad f\left(x_{k}\right),\log_{x_{k}}\left(x_{k+1}\right)\right\rangle +\frac{1}{2s}\left\Vert \log_{x_{k}}\left(x_{k+1}\right)\right\Vert ^{2}\\
		& =f\left(x_{k}\right)-\frac{1}{2s}\left\Vert \log_{x_{k}}\left(x_{k+1}\right)\right\Vert ^{2}.
	\end{align*}
	Taking a weighted sum of these inequalities yields
	\begin{align*}
		0 & \geq\left[f\left(x_{k}\right)-f\left(x^{*}\right)-\frac{1}{s}\left\langle \log_{x_{k}}\left(x_{k+1}\right),\log_{x_{k}}\left(x^{*}\right)\right\rangle \right]\\
		& \quad+(k+\zeta)\left[f\left(x_{k+1}\right)-f\left(x_{k}\right)+\frac{1}{2s}\left\Vert \log_{x_{k}}\left(x_{k+1}\right)\right\Vert ^{2}\right]\\
		& =(k+\zeta)\left(f\left(x_{k+1}\right)-f\left(x^{*}\right)\right)-(k+\zeta-1)\left(f\left(x_{k}\right)-f\left(x^{*}\right)\right)\\
		& \quad-\frac{1}{s}\left\langle \log_{x_{k}}\left(x_{k+1}\right),\log_{x_{k}}\left(x^{*}\right)\right\rangle +\frac{k+\zeta}{2s}\left\Vert \log_{x_{k}}\left(x_{k+1}\right)\right\Vert ^{2}\\
		& \geq(k+\zeta)\left(f\left(x_{k+1}\right)-f\left(x^{*}\right)\right)-(k+\zeta-1)\left(f\left(x_{k}\right)-f\left(x^{*}\right)\right)\\
		& \quad-\frac{1}{s}\left\langle \log_{x_{k}}\left(x_{k+1}\right),\log_{x_{k}}\left(x^{*}\right)\right\rangle +\frac{\zeta}{2s}\left\Vert \log_{x_{k}}\left(x_{k+1}\right)\right\Vert ^{2}\\
		& =(k+\zeta)\left(f\left(x_{k+1}\right)-f\left(x^{*}\right)\right)-(k+\zeta-1)\left(f\left(x_{k}\right)-f\left(x^{*}\right)\right)\\
		& \quad+\frac{1}{2s}\left(\zeta\left\Vert \log_{x_{k}}\left(x_{k+1}\right)\right\Vert ^{2}-2\left\langle \log_{x_{k}}\left(x_{k+1}\right),\log_{x_{k}}\left(x^{*}\right)\right\rangle \right)\\
		& =(k+\zeta)\left(f\left(x_{k+1}\right)-f\left(x^{*}\right)\right)-(k+\zeta-1)\left(f\left(x_{k}\right)-f\left(x^{*}\right)\right)\\
		& \quad+\frac{1}{2s}\left(\left\Vert \log_{x_{k}}\left(x_{k+1}\right)-\log_{x_{k}}\left(x^{*}\right)\right\Vert ^{2}+(\zeta-1)\left\Vert \log_{x_{k}}\left(x_{k+1}\right)\right\Vert ^{2}-\left\Vert \log_{x_{k}}\left(x^{*}\right)\right\Vert ^{2}\right).
	\end{align*}
	
	{(Step 2: Handling metric distortion).}
	By \cref{lem:distortion1} with $p_A=x_k$, $p_B=x_{k+1}$, $x=x^{*}$, $v_A=\log_{x_k}\left(x_{k+1}\right)$, $v_B=0$, $r=1$, we have
	\[
	\left\Vert \log_{x_{k+1}}\left(x^{*}\right)\right\Vert _{x_{k+1}}^{2}\leq\left\Vert \log_{x_{k}}\left(x_{k+1}\right)-\log_{x_{k}}\left(x^{*}\right)\right\Vert _{x_{k}}^{2}+(\zeta-1)\left\Vert \log_{x_{k}}\left(x_{k+1}\right)\right\Vert _{x_{k}}^{2}.
	\]
	Combining this inequality with the result in Step 1 gives
	\begin{align*}
		0 & \geq(k+\zeta)\left(f\left(x_{k+1}\right)-f\left(x^{*}\right)\right)-(k+\zeta-1)\left(f\left(x_{k}\right)-f\left(x^{*}\right)\right)\\
		& \quad+\frac{1}{2s}\left(\left\Vert \log_{x_{k}}\left(x_{k+1}\right)-\log_{x_{k}}\left(x^{*}\right)\right\Vert _{x_{k}}^{2}+(\zeta-1)\left\Vert \log_{x_{k}}\left(x_{k+1}\right)\right\Vert _{x_{k}}^{2}-\left\Vert \log_{x_{k}}\left(x^{*}\right)\right\Vert _{x_{k}}^{2}\right)\\
		& \quad+\frac{1}{2s}\left(\left\Vert \log_{x_{k+1}}\left(x^{*}\right)\right\Vert _{x_{k+1}}^{2}-\left\Vert \log_{x_{k}}\left(x_{k+1}\right)-\log_{x_{k}}\left(x^{*}\right)\right\Vert _{x_{k}}^{2}-(\zeta-1)\left\Vert \log_{x_{k}}\left(x_{k+1}\right)\right\Vert _{x_{k}}^{2}\right)\\
		& =(k+\zeta)\left(f\left(x_{k+1}\right)-f\left(x^{*}\right)\right)-(k+\zeta-1)\left(f\left(x_{k}\right)-f\left(x^{*}\right)\right)+\frac{1}{2s}\left\Vert \log_{x_{k+1}}\left(x^{*}\right)\right\Vert _{x_{k+1}}^{2}-\frac{1}{2s}\left\Vert \log_{x_{k}}\left(x^{*}\right)\right\Vert _{x_{k}}^{2}\\
		& =\frac{\phi_{k+1}-\phi_{k}}{s}.
	\end{align*}
\end{proof}

\begin{corollary}
	\label{cor:rgd-c}
	Let $f$ be a geodesically convex and geodesically $L$-smooth function. Then, \ref{eq:rgd} with the step size $s=\frac{1}{L}$ finds an $\epsilon$-approximate solution in $O\left(\frac{\zeta L}{\epsilon}\right)$ iterations.
\end{corollary}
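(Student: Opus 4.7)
The proof is essentially a direct corollary of \cref{thm:rgd-c}, so my plan is just to unwind the monotone potential into an explicit rate. First, I would apply \cref{thm:rgd-c} iteratively to obtain $\phi_k \leq \phi_0$ for all $k \geq 0$. Discarding the nonnegative squared-logarithm term in $\phi_k$ then yields
\[
s(k+\zeta-1)\left(f(x_k)-f(x^*)\right) \leq \phi_0,
\]
so that
\[
f(x_k)-f(x^*) \leq \frac{1}{s(k+\zeta-1)}\left[s(\zeta-1)\left(f(x_0)-f(x^*)\right)+\tfrac{1}{2}\left\Vert \log_{x_0}(x^*)\right\Vert^2\right].
\]

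Next I would substitute $s = 1/L$ and bound the initialization. By \cref{ass:1}, $\diam(N)\leq D$, and by \cref{ass:2}, $x^* \in N$, so $\left\Vert \log_{x_0}(x^*)\right\Vert \leq d(x_0, x^*) \leq D$. This gives
\[
f(x_k)-f(x^*) \leq \frac{(\zeta-1)\left(f(x_0)-f(x^*)\right)+ LD^2/2}{k+\zeta-1}.
\]
For $k+\zeta-1 \geq k$, the right-hand side is at most $\frac{C_0 + \zeta L D^2/2}{k}$ for an absolute constant $C_0$ depending on $f(x_0)-f(x^*)$ and $D$, which is $O(\zeta L/k)$ treating $D$ and $f(x_0)-f(x^*)$ as constants absorbed into the big-$O$. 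Setting this upper bound at most $\epsilon$ and solving for $k$ yields the claimed iteration complexity $k = O(\zeta L/\epsilon)$.

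There is no real obstacle here since \cref{thm:rgd-c} has already done the work of establishing the decrease of $\phi_k$; the only care needed is to (i) invoke \cref{ass:1,ass:2} to control $\left\Vert \log_{x_0}(x^*)\right\Vert$ by $D$, and (ii) to note that although $\zeta \geq 1$ appears in the numerator through $\phi_0$, the term $s(\zeta-1)(f(x_0)-f(x^*))$ is bounded independently of $k$ and therefore contributes only to the suppressed constants in $O(\zeta L/\epsilon)$.
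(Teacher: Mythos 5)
Your proof is correct in structure and arrives at the right conclusion, but it diverges from the paper's proof in one small but notable bookkeeping step, and has a minor imprecision worth flagging.

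The paper, after writing $f(x_k)-f(x^*)\leq\frac{\phi_0}{s(k+\zeta-1)}$, applies geodesic $L$-smoothness at $x^*$ (where $\grad f(x^*)=0$) to get $f(x_0)-f(x^*)\leq\frac{L}{2}\left\Vert\log_{x_0}(x^*)\right\Vert^2=\frac{1}{2s}\left\Vert\log_{x_0}(x^*)\right\Vert^2$. This lets the two terms in $\phi_0$ collapse cleanly into $\frac{\zeta}{2}\left\Vert\log_{x_0}(x^*)\right\Vert^2$, yielding the single closed-form bound $f(x_k)-f(x^*)\leq\frac{\zeta L}{2(k+\zeta-1)}\left\Vert\log_{x_0}(x^*)\right\Vert^2$, from which the $O(\zeta L/\epsilon)$ rate reads off immediately and with the $\zeta L$ product genuinely appearing as a coefficient. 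You instead keep $f(x_0)-f(x^*)$ as a free quantity and bound $\left\Vert\log_{x_0}(x^*)\right\Vert$ by $D$. That is also valid, and in fact proves a bound of the form $O\!\left(\frac{\zeta\,(f(x_0)-f(x^*)) + L D^2}{\epsilon}\right)$, which implies the claimed $O(\zeta L/\epsilon)$ once $\zeta,L\geq1$; but the $\zeta$- and $L$-dependence are decoupled in your numerator, so the product $\zeta L$ does not arise as naturally.

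One wording issue: you call $C_0=(\zeta-1)(f(x_0)-f(x^*))$ "an absolute constant $C_0$ depending on $f(x_0)-f(x^*)$ and $D$," but $C_0$ in fact depends on $\zeta$, so it is not absolute. This does not break the argument (the term is still $O(\zeta)\leq O(\zeta L)$), but you should either write $C_0=O(\zeta)$ or simply mirror the paper and eliminate $f(x_0)-f(x^*)$ via $L$-smoothness, after which no such accounting is needed.
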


\begin{proof}
	It follows from \cref{thm:rgd-c} that
	\[
	f\left(x_{k}\right)-f\left(x^{*}\right)\leq\frac{\phi_{k}}{s(k+\zeta-1)}\leq\frac{\phi_{0}}{s(k+\zeta-1)}=\frac{1}{s(k+\zeta-1)}\left(s(\zeta-1)\left(f\left(x_{0}\right)-f\left(x^{*}\right)\right)+\frac{1}{2}\left\Vert \log_{x_{0}}\left(x^{*}\right)\right\Vert ^{2}\right).
	\]
	By geodesic $\frac{1}{s}$-smoothness of $f$, we have
	\[
	f\left(x_{k}\right)-f\left(x^{*}\right)\le\frac{1}{s(k+\zeta-1)}\left(s(\zeta-1)\frac{1}{2s}\left\Vert \log_{x_{0}}\left(x^{*}\right)\right\Vert ^{2}+\frac{1}{2}\left\Vert \log_{x_{0}}\left(x^{*}\right)\right\Vert ^{2}\right)=\frac{\zeta L}{2(k+\zeta-1)}\left\Vert \log_{x_{0}}\left(x^{*}\right)\right\Vert ^{2}.
	\]
	Thus, we have $f\left(x_k\right)-f\left(x^{*}\right)\leq\epsilon$ whenever $k\geq\frac{\zeta L}{2\epsilon}\left\Vert \log_{x_{0}}\left(x^{*}\right)\right\Vert ^{2}-(\zeta-1)$. Thus we obtain an $O\left(\frac{\zeta L}{\epsilon}\right)$ iteration complexity.
\end{proof}

This result implies that the iteration complexity of \ref{eq:rgd} for geodesically convex case is the same as that of \ref{eq:gd}, since $\zeta$ is a constant.

\subsection{Geodesically strongly convex case}

We define the potential function as
\[
\phi_{k}=(1-\mu s)^{-k}\left(f\left(x_{k}\right)-f\left(x^{*}\right)+\frac{\mu}{2}\left\Vert \log_{x_{k}}\left(x^{*}\right)\right\Vert ^{2}\right).
\]
The following theorem states that $\phi_k$ is decreasing.

\begin{restatable}{theorem}{rgdscthm}
	\label{thm:rgd-sc}
	Let $f$ be a geodesically $\mu$-strongly convex and geodesically $L$-smooth function. If $s\leq\min\left\{ \frac{1}{L},\frac{1}{\zeta\mu}\right\} $, then the iterates of \ref{eq:rgd} satisfy
	\[
	(1-\mu s)^{-(k+1)}\left(f\left(x_{k+1}\right)-f\left(x^{*}\right)+\frac{\mu}{2}\left\Vert \log_{x_{k+1}}\left(x^{*}\right)\right\Vert ^{2}\right)\leq(1-\mu s)^{-k}\left(f\left(x_{k}\right)-f\left(x^{*}\right)+\frac{\mu}{2}\left\Vert \log_{x_{k}}\left(x^{*}\right)\right\Vert ^{2}\right)
	\]
	for all $k\geq0$.
\end{restatable}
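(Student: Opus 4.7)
\textbf{Proof plan for \cref{thm:rgd-sc}.} The plan is to mirror the two-step template established in \cref{thm:rgd-c}: first, do all the algebraic work inside the tangent space $T_{x_k}M$, and then use the metric distortion lemma (\cref{lem:distortion1}) to translate the result into the tangent space $T_{x_{k+1}}M$. Writing $D_k := f(x_k)-f(x^*)+\frac{\mu}{2}\|\log_{x_k}(x^*)\|^2$, the claim $\phi_{k+1}\leq\phi_k$ is equivalent to the one-step contraction
\[
D_{k+1} \leq (1-\mu s)\,D_k,
\]
so the entire proof reduces to establishing this inequality.

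\textbf{Step 1 (tangent-space computation at $x_k$).} Using $\log_{x_k}(x_{k+1}) = -s\,\grad f(x_k)$, geodesic $L$-smoothness together with $s\leq 1/L$ yields the descent bound
\[
f(x_{k+1}) - f(x_k) \leq -\tfrac{s}{2}\|\grad f(x_k)\|^2_{x_k}.
\]
Geodesic $\mu$-strong convexity gives
\[
f(x_k)-f(x^*) \leq -\langle \grad f(x_k), \log_{x_k}(x^*)\rangle - \tfrac{\mu}{2}\|\log_{x_k}(x^*)\|_{x_k}^2.
\]
Adding the smoothness bound to $\mu s$ times the strong convexity bound produces a bound on $f(x_{k+1})-f(x_k)+\mu s(f(x_k)-f(x^*))$ entirely in terms of $\|\grad f(x_k)\|^2$, $\langle \grad f(x_k),\log_{x_k}(x^*)\rangle$, and $\|\log_{x_k}(x^*)\|^2$.

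\textbf{Step 2 (handling metric distortion).} Apply \cref{lem:distortion1} with $p_A=x_k$, $p_B=x_{k+1}$, $x=x^*$, $v_A = \log_{x_k}(x_{k+1}) = -s\,\grad f(x_k)$, $v_B=0$, and $r=1$ to obtain
\[
\|\log_{x_{k+1}}(x^*)\|_{x_{k+1}}^2 \leq \|\log_{x_k}(x_{k+1})-\log_{x_k}(x^*)\|_{x_k}^2 + (\zeta-1)\|\log_{x_k}(x_{k+1})\|_{x_k}^2.
\]
Expanding the right-hand side gives
\[
\|\log_{x_{k+1}}(x^*)\|_{x_{k+1}}^2 \leq \|\log_{x_k}(x^*)\|_{x_k}^2 + 2s\langle \grad f(x_k),\log_{x_k}(x^*)\rangle + \zeta s^2\|\grad f(x_k)\|_{x_k}^2.
\]

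\textbf{Step 3 (combining).} Plugging the distortion bound into $D_{k+1} - (1-\mu s)D_k$ and using the Step 1 estimate, a direct computation shows that the cross terms $\langle \grad f(x_k),\log_{x_k}(x^*)\rangle$ and the terms $\|\log_{x_k}(x^*)\|^2$ cancel exactly, leaving
\[
D_{k+1} - (1-\mu s)D_k \leq \tfrac{s}{2}\bigl(\mu\zeta s - 1\bigr)\|\grad f(x_k)\|_{x_k}^2,
\]
which is $\leq 0$ precisely when $s\leq\frac{1}{\zeta\mu}$. This is the second condition on $s$ in the hypothesis.

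\textbf{Expected difficulty.} There is no real obstacle: the argument is a straightforward analog of the Euclidean $\ref{eq:gd}$ analysis with the one new ingredient being \cref{lem:distortion1}, whose role is exactly to absorb the extra $\zeta s^2\|\grad f(x_k)\|^2$ term produced by curvature. The only subtle point is verifying the cancellation of the inner-product and $\|\log_{x_k}(x^*)\|^2$ terms, which forces the choice of weight $\mu s$ in combining the strong convexity and smoothness inequalities, and the only place the curvature constant $\zeta$ enters the final stepsize condition is through the factor $(\zeta-1)\|\log_{x_k}(x_{k+1})\|^2$ in the distortion lemma.
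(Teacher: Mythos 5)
Your proposal is correct and follows essentially the same route as the paper's proof: a weighted sum of the strong-convexity inequality (weight $\mu s$) and the smoothness inequality (weight $1$) in $T_{x_k}M$, followed by \cref{lem:distortion1} with the same choice of $p_A,p_B,x,v_A,v_B,r$ to move to $T_{x_{k+1}}M$. Your ``leftover'' term $\frac{s}{2}(\mu\zeta s-1)\|\grad f(x_k)\|^2\leq 0$ is exactly the paper's use of $\zeta q\leq 1$ (with $q=\mu s$) to discard the surplus $(1-\zeta q)\frac{1}{2s}\|\log_{x_k}(x_{k+1})\|^2$ term, just presented at the end rather than in the middle.
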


\begin{proof}
	(Step 1).
	In this step, $\langle\cdot,\cdot\rangle$ and $\Vert\cdot\Vert$ always denote the inner product and the norm on $T_{x_k}M$. Set $q=\mu s$. By geodesic $\mu$-strong convexity of $f$, we have
	\begin{align*}
		f\left(x^{*}\right) & \geq f\left(x_{k}\right)+\left\langle \grad f\left(x_{k}\right),\log_{x_{k}}\left(x^{*}\right)\right\rangle +\frac{\mu}{2}\left\Vert \log_{x_{k}}\left(x^{*}\right)\right\Vert ^{2}\\
		& =f\left(x_{k}\right)-\frac{1}{s}\left\langle \log_{x_{k}}\left(x_{k+1}\right),\log_{x_{k}}\left(x^{*}\right)\right\rangle +\frac{q}{2s}\left\Vert \log_{x_{k}}\left(x^{*}\right)\right\Vert ^{2}.
	\end{align*}
	By geodesic $\frac{1}{s}$-smoothness of $f$, we have
	\begin{align*}
		f\left(x_{k+1}\right) & \leq f\left(x_{k}\right)+\left\langle \grad f\left(x_{k}\right),\log_{x_{k}}\left(x_{k+1}\right)\right\rangle +\frac{1}{2s}\left\Vert \log_{x_{k}}\left(x_{k+1}\right)\right\Vert ^{2}\\
		& =f\left(x_{k}\right)-\frac{1}{2s}\left\Vert \log_{x_{k}}\left(x_{k+1}\right)\right\Vert ^{2}.
	\end{align*}
	Note that $\zeta q\leq1$. Taking weighted sum of these inequalities, we arrive to the valid
	inequality
	\begin{align*}
		0 & \geq q\left[f\left(x_{k}\right)-f\left(x^{*}\right)-\frac{1}{s}\left\langle \log_{x_{k}}\left(x_{k+1}\right),\log_{x_{k}}\left(x^{*}\right)\right\rangle +\frac{q}{2s}\left\Vert \log_{x_{k}}\left(x^{*}\right)\right\Vert ^{2}.\right]\\
		& \quad+\left[f\left(x_{k+1}\right)-f\left(x_{k}\right)+\frac{1}{2s}\left\Vert \log_{x_{k}}\left(x_{k+1}\right)\right\Vert ^{2}\right]\\
		& =f\left(x_{k+1}\right)-f\left(x^{*}\right)-(1-q)\left(f\left(x_{k}\right)-f\left(x^{*}\right)\right)\\
		& \quad-\frac{q}{s}\left\langle \log_{x_{k}}\left(x_{k+1}\right),\log_{x_{k}}\left(x^{*}\right)\right\rangle +\frac{q^{2}}{2s}\left\Vert \log_{x_{k}}\left(x^{*}\right)\right\Vert ^{2}+\frac{1}{2s}\left\Vert \log_{x_{k}}\left(x_{k+1}\right)\right\Vert ^{2}\\
		& \geq f\left(x_{k+1}\right)-f\left(x^{*}\right)-(1-q)\left(f\left(x_{k}\right)-f\left(x^{*}\right)\right)\\
		& \quad+\frac{q}{2s}\left(-2\left\langle \log_{x_{k}}\left(x_{k+1}\right),\log_{x_{k}}\left(x^{*}\right)\right\rangle +q\left\Vert \log_{x_{k}}\left(x^{*}\right)\right\Vert ^{2}+\zeta\left\Vert \log_{x_{k}}\left(x_{k+1}\right)\right\Vert ^{2}\right)\\
		& =f\left(x_{k+1}\right)-f\left(x^{*}\right)-(1-q)\left(f\left(x_{k}\right)-f\left(x^{*}\right)\right)\\
		& \quad+\frac{q}{2s}\left(\left\Vert \log_{x_{k}}\left(x_{k+1}\right)-\log_{x_{k}}\left(x^{*}\right)\right\Vert ^{2}+(\zeta-1)\left\Vert \log_{x_{k}}\left(x_{k+1}\right)\right\Vert ^{2}-(1-q)\left\Vert \log_{x_{k}}\left(x^{*}\right)\right\Vert ^{2}\right).
	\end{align*}
	
	(Step 2: Handle metric distortion).
	By \cref{lem:distortion1} with $p_A=x_k$, $p_B=x_{k+1}$, $x=x^{*}$, $v_A=\log_{x_k}\left(x_{k+1}\right)$, $v_B=0$, $r=1$, we have
	\[
	\left\Vert \log_{x_{k+1}}\left(x^{*}\right)\right\Vert _{x_{k+1}}^{2}\leq\left\Vert \log_{x_{k}}\left(x_{k+1}\right)-\log_{x_{k}}\left(x^{*}\right)\right\Vert _{x_{k}}^{2}+(\zeta-1)\left\Vert \log_{x_{k}}\left(x_{k+1}\right)\right\Vert _{x_{k}}^{2}.
	\]
	Combining this inequality with the result in Step 1 gives
	\begin{align*}
		0 & \geq f\left(x_{k+1}\right)-f\left(x^{*}\right)-(1-q)\left(f\left(x_{k}\right)-f\left(x^{*}\right)\right)\\
		& \quad+\frac{q}{2s}\left(\left\Vert \log_{x_{k}}\left(x_{k+1}\right)-\log_{x_{k}}\left(x^{*}\right)\right\Vert _{x_{k}}^{2}+(\zeta-1)\left\Vert \log_{x_{k}}\left(x_{k+1}\right)\right\Vert _{x_{k}}^{2}-(1-q)\left\Vert \log_{x_{k}}\left(x^{*}\right)\right\Vert _{x_{k}}^{2}\right)\\
		& \quad+\frac{q}{2s}\left(\left\Vert \log_{x_{k+1}}\left(x^{*}\right)\right\Vert _{x_{k+1}}^{2}-\left\Vert \log_{x_{k}}\left(x_{k+1}\right)-\log_{x_{k}}\left(x^{*}\right)\right\Vert _{x_{k}}^{2}-(\zeta-1)\left\Vert \log_{x_{k}}\left(x_{k+1}\right)\right\Vert _{x_{k}}^{2}\right)\\
		0 & =f\left(x_{k+1}\right)-f\left(x^{*}\right)-(1-q)\left(f\left(x_{k}\right)-f\left(x^{*}\right)\right)+\frac{q}{2s}\left\Vert \log_{x_{k+{\color{red}1}}}\left(x^{*}\right)\right\Vert _{x_{k+1}}^{2}-\frac{q}{2s}(1-q)\left\Vert \log_{x_{k}}\left(x^{*}\right)\right\Vert _{x_{k}}^{2}\\
		& =\left(f\left(x_{k+1}\right)-f\left(x^{*}\right)+\frac{\mu}{2}\left\Vert \log_{x_{k+1}}\left(x^{*}\right)\right\Vert _{x_{k+1}}^{2}\right)-(1-q)\left(f\left(x_{k}\right)-f\left(x^{*}\right)+\frac{\mu}{2}\left\Vert \log_{x_{k}}\left(x^{*}\right)\right\Vert _{x_{k}}^{2}\right)\\
		& =(1-q)^{(k+1)}\left(\phi_{k+1}-\phi_{k}\right).
	\end{align*}
\end{proof}

\begin{corollary}
	\label{cor:rgd-sc}
	Let $f$ be a geodesically $\mu$-strongly convex and geodesically $L$-smooth function. Then, \ref{eq:rgd} with step size $s=\frac{1}{\zeta L}$ finds an $\epsilon$-approximate solution in $O\left(\frac{\zeta L}{\mu}\log\frac{L}{\epsilon}\right)$ iterations.
\end{corollary}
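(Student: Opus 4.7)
The plan is to invoke Theorem~\ref{thm:rgd-sc} directly, bound the initial potential using $L$-smoothness, and then convert the resulting linear decay into an iteration count via the standard inequality $-\log(1-x) \geq x$.

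First I would verify that the chosen step size $s = \frac{1}{\zeta L}$ satisfies the hypothesis $s \leq \min\{1/L, 1/(\zeta\mu)\}$ of Theorem~\ref{thm:rgd-sc}. Since $\zeta \geq 1$ by definition, $\frac{1}{\zeta L} \leq \frac{1}{L}$. Moreover, a geodesically $\mu$-strongly convex and $L$-smooth function must satisfy $\mu \leq L$, which gives $\frac{1}{\zeta L} \leq \frac{1}{\zeta \mu}$. Thus, Theorem~\ref{thm:rgd-sc} applies and yields $\phi_{k+1} \leq \phi_k$ for all $k \geq 0$, so $\phi_k \leq \phi_0$. Unfolding the definition of $\phi_k$ gives
\[
f(x_k) - f(x^{*}) \;\leq\; (1-\mu s)^{k}\,\phi_0.
\]

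Next I would bound $\phi_0$. By geodesic $L$-smoothness applied at $x^{*}$ with $\grad f(x^{*}) = 0$, we have $f(x_0) - f(x^{*}) \leq \frac{L}{2}\|\log_{x_0}(x^{*})\|^2$, and since $\mu \leq L$,
\[
\phi_0 \;=\; f(x_0) - f(x^{*}) + \frac{\mu}{2}\|\log_{x_0}(x^{*})\|^2 \;\leq\; L\,\|\log_{x_0}(x^{*})\|^2 \;\leq\; L D^2,
\]
using $\|\log_{x_0}(x^*)\| = d(x_0,x^{*}) \leq D$.

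Finally I would convert the linear convergence rate into an iteration complexity. To enforce $(1-\mu s)^{k}\phi_0 \leq \epsilon$, it suffices that $k \geq \frac{\log(\phi_0/\epsilon)}{-\log(1-\mu s)}$. Using $-\log(1-\mu s) \geq \mu s = \frac{\mu}{\zeta L}$, it suffices to take
\[
k \;\geq\; \frac{\zeta L}{\mu}\log\!\left(\frac{\phi_0}{\epsilon}\right) \;=\; O\!\left(\frac{\zeta L}{\mu}\log\frac{L}{\epsilon}\right),
\]
since $\phi_0 = O(L)$ (treating $D$ as a constant in $O(\cdot)$). This proves the claimed complexity. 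The only real subtlety is the verification that the chosen step size lies in the admissible range of Theorem~\ref{thm:rgd-sc}; every other step is either a direct application of the theorem, a one-line consequence of smoothness, or the standard $\log(1-x)$ bound, so I do not anticipate any significant obstacle.
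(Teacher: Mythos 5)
Your proof is correct and follows essentially the same route as the paper: apply \cref{thm:rgd-sc} to get linear decay of the potential, bound $\phi_0$ via geodesic $L$-smoothness and $\mu \leq L$, and convert the rate $(1-\mu s)^k$ into an iteration count via $-\log(1-x) \geq x$ (the paper uses the equivalent $(1-\mu s)^k \leq e^{-\mu s k}$). The one addition you make — explicitly verifying that $s = \frac{1}{\zeta L}$ lies in the admissible range $s \leq \min\{1/L, 1/(\zeta\mu)\}$ — is a detail the paper leaves implicit and is a worthwhile check.
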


\begin{proof}
	By \cref{thm:rgd-sc}, we have
	\[
	f\left(x_{k}\right)-f\left(x^{*}\right)\leq\left(1-\mu s\right)^{k}\phi_{k}\leq\left(1-\mu s\right)^{k}\phi_{0}=\left(1-\mu s\right)^{k}\left(f\left(x_{0}\right)-f\left(x^{*}\right)+\frac{\mu}{2}\left\Vert \log_{x_{0}}\left(x^{*}\right)\right\Vert ^{2}\right).
	\]
	It follows from the geodesic $L$-smoothness of $f$ and the inequality $\left(1-\frac{\mu}{\zeta L}\right)^{k}\leq e^{-\frac{\mu}{\zeta L}k}$ that
	\[
	f\left(x_{k}\right)-f\left(x^{*}\right)\leq\left(1-\frac{\mu}{\zeta L}\right)^{k}\left(\frac{L}{2}\left\Vert \log_{x_{0}}\left(x^{*}\right)\right\Vert ^{2}+\frac{\mu}{2}\left\Vert \log_{x_{0}}\left(x^{*}\right)\right\Vert ^{2}\right)\leq e^{-\frac{\mu}{\zeta L}k}L\left\Vert \log_{x_{0}}\left(x^{*}\right)\right\Vert ^{2}.
	\]
	Thus, we have $f\left(x_k\right)-f\left(x^{*}\right)\leq\epsilon$ whenever $k\geq\frac{\zeta L}{\mu}\log\left(\frac{L}{\epsilon}\left\Vert \log_{x_{0}}\left(x^{*}\right)\right\Vert ^{2}\right)$. Accordingly, we obtain an $O\left(\frac{\zeta L}{\mu}\log\frac{L}{\epsilon}\right)$ iteration complexity.
\end{proof}

This result implies that the iteration complexity of \ref{eq:rgd} for g-strongly convex case is the same as that of \ref{eq:gd}, since $\zeta$ is a constant. Another proof of the iteration complexity of \ref{eq:rgd} for g-strongly convex functions can be found in \citep[Proposition~1.8]{criscitiello2021negative}.

\section{Convergence Analysis for RNAG-C}
\label{app:rnag-c}

\rnagcthm*

\begin{proof}
	(Step 1).
	In this step, $\langle\cdot,\cdot\rangle$ and $\Vert\cdot\Vert$ always denote the inner product and the norm on $T_{y_k}M$. It is easy to check that $\grad f\left(y_{k}\right)=-\frac{\xi}{s\lambda_{k}}\left(\bar{\bar{v}}_{k+1}-v_{k}\right)$,
	$\log_{y_{k}}\left(x_{k}\right)=-\frac{\xi}{\lambda_{k}-1}v_{k}$,\footnote{Note that $y_{k}=\exp_{x_{k}}\left(\frac{\xi}{\lambda_k+(\xi-1)}\bar{v}_{k}\right)$
		and $v_{k}=\Gamma_{x_{k}}^{y_{k}}\left(\bar{v}_{k}-\log_{x_{k}}\left(y_{k}\right)\right)=\Gamma_{x_{k}}^{y_{k}}\left(\left(1-\frac{\xi}{\lambda_k+(\xi-1)}\right)\bar{v}_{k}\right)$.
		Let $\gamma_{1}$ be the geodesic such that $\gamma_{1}(0)=x_{k}$
		and $\gamma_{1}(1)=y_{k}$, then $\gamma_{1}'(0)=\log_{x_{k}}\left(y_{k}\right)$.
		Let $\gamma_{2}$ be the geodesic defined as $\gamma_{2}(t)=\gamma_{1}(1-t)$.
		Then, $\log_{y_{k}}\left(x_{k}\right)=\gamma_{2}'(0)=-\gamma_{1}'(1)=-\Gamma_{x_{k}}^{y_{k}}\left(\gamma_{1}'(0)\right)=-\Gamma_{x_{k}}^{y_{k}}\left(\log_{x_{k}}\left(y_{k}\right)\right)$.
		Now, we have $\log_{y_{k}}\left(x_{k}\right)=-\Gamma_{x_{k}}^{y_{k}}\left(\log_{x_{k}}\left(y_{k}\right)\right)=-\frac{\xi}{\lambda_{k}+(\xi-1)}\Gamma_{x_{k}}^{y_{k}}\left(\bar{v}_{k}\right)=-\frac{\frac{\xi}{\lambda_{k}+(\xi-1)}}{1-\frac{\xi}{\lambda_{k}+(\xi-1)}}v_{k}=-\frac{\xi}{\lambda_{k}-1}v_{k}$.} and $\lambda_{k}^{2}-\lambda_{k}\leq\lambda_{k-1}^{2}$. By the geodesic
	convexity of $f$, we have
	\begin{align*}
		f\left(x^{*}\right) & \geq f\left(y_{k}\right)+\left\langle \grad f\left(y_{k}\right),\log_{y_{k}}\left(x^{*}\right)\right\rangle \\
		& =f\left(y_{k}\right)-\frac{\xi}{s\lambda_{k}}\left\langle \bar{\bar{v}}_{k+1}-v_{k},\log_{y_{k}}\left(x^{*}\right)\right\rangle ,\\
		f\left(x_{k}\right) & \geq f\left(y_{k}\right)+\left\langle \grad f\left(y_{k}\right),\log_{y_{k}}\left(x_{k}\right)\right\rangle \\
		& =f\left(y_{k}\right)+\frac{\xi^{2}}{s\left(\lambda_{k}^{2}-\lambda_{k}\right)}\left\langle \bar{\bar{v}}_{k+1}-v_{k},v_{k}\right\rangle .
	\end{align*}
	It follows from the geodesic $\frac{1}{s}$-smoothness of $f$ that
	\begin{align*}
		f\left(x_{k+1}\right) & \leq f\left(y_{k}\right)+\left\langle \grad f\left(y_{k}\right),\log_{y_{k}}\left(x_{k+1}\right)\right\rangle +\frac{1}{2s}\left\Vert \log_{y_{k}}\left(x_{k+1}\right)\right\Vert ^{2}\\
		& =f\left(y_{k}\right)-\frac{s}{2}\left\Vert \grad f\left(y_k\right)\right\Vert^2\\
		& =f\left(y_{k}\right)-\frac{\xi^{2}}{2s\lambda_{k}^{2}}\left\Vert \bar{\bar{v}}_{k+1}-v_{k}\right\Vert ^{2}.
	\end{align*}
	
	Taking a weighted sum of these inequalities yields
	\begin{align*}
		0 & \geq\lambda_{k}\left[f\left(y_{k}\right)-f\left(x^{*}\right)-\frac{\xi}{s\lambda_{k}}\left\langle \bar{\bar{v}}_{k+1}-v_{k},\log_{y_{k}}\left(x^{*}\right)\right\rangle \right]\\
		& \quad+\left(\lambda_{k}^{2}-\lambda_{k}\right)\left[f\left(y_{k}\right)-f\left(x_{k}\right)+\frac{\xi^{2}}{s\left(\lambda_{k}^{2}-\lambda_{k}\right)}\left\langle \bar{\bar{v}}_{k+1}-v_{k},v_{k}\right\rangle \right]\\
		& \quad+\lambda_{k}^{2}\left[f\left(x_{k+1}\right)-f\left(y_{k}\right)+\frac{\xi^{2}}{2s\lambda_{k}^{2}}\left\Vert \bar{\bar{v}}_{k+1}-v_{k}\right\Vert ^{2}\right]\\
		& =\lambda_{k}^{2}\left(f\left(x_{k+1}\right)-f\left(x^{*}\right)\right)-\left(\lambda_{k}^{2}-\lambda_{k}\right)\left(f\left(x_{k}\right)-f\left(x^{*}\right)\right)\\
		& \quad-\frac{\xi}{s}\left\langle \bar{\bar{v}}_{k+1}-v_{k},\log_{y_{k}}\left(x^{*}\right)\right\rangle +\frac{\xi^{2}}{s}\left\langle \bar{\bar{v}}_{k+1}-v_{k},v_{k}\right\rangle +\frac{\xi^{2}}{2s}\left\Vert \bar{\bar{v}}_{k+1}-v_{k}\right\Vert ^{2}\\
		& \geq\lambda_{k}^{2}\left(f\left(x_{k+1}\right)-f\left(x^{*}\right)\right)-\lambda_{k-1}^{2}\left(f\left(x_{k}\right)-f\left(x^{*}\right)\right)\\
		& \quad+\frac{\xi}{2s}\left(-2\left\langle \bar{\bar{v}}_{k+1}-v_{k},\log_{y_{k}}\left(x^{*}\right)\right\rangle +2\xi\left\langle \bar{\bar{v}}_{k+1}-v_{k},v_{k}\right\rangle +\xi\left\Vert \bar{\bar{v}}_{k+1}-v_{k}\right\Vert ^{2}\right)\\
		& =\lambda_{k}^{2}\left(f\left(x_{k+1}\right)-f\left(x^{*}\right)\right)-\lambda_{k-1}^{2}\left(f\left(x_{k}\right)-f\left(x^{*}\right)\right)\\
		& \quad+\frac{\xi}{2s}\left(\left\Vert \bar{\bar{v}}_{k+1}-v_{k}\right\Vert ^{2}-2\left\langle \bar{\bar{v}}_{k+1}-v_{k},\log_{y_{k}}\left(x^{*}\right)-v_{k}\right\rangle +2(\xi-1)\left\langle \bar{\bar{v}}_{k+1}-v_{k},v_{k}\right\rangle +(\xi-1)\left\Vert \bar{\bar{v}}_{k+1}-v_{k}\right\Vert ^{2}\right).\\
		& =\lambda_{k}^{2}\left(f\left(x_{k+1}\right)-f\left(x^{*}\right)\right)-\lambda_{k-1}^{2}\left(f\left(x_{k}\right)-f\left(x^{*}\right)\right)\\
		& \quad+\frac{\xi}{2s}\left(\left\Vert \bar{\bar{v}}_{k+1}-v_{k}\right\Vert ^{2}-2\left\langle \bar{\bar{v}}_{k+1}-v_{k},\log_{y_{k}}\left(x^{*}\right)-v_{k}\right\rangle +(\xi-1)\left\Vert \bar{\bar{v}}_{k+1}\right\Vert ^{2}-(\xi-1)\left\Vert v_{k}\right\Vert ^{2}\right).
	\end{align*}
	Note that
	\[
	\left\Vert \bar{\bar{v}}_{k+1}-\log_{y_{k}}\left(x^{*}\right)\right\Vert ^{2}-\left\Vert v_{k}-\log_{y_{k}}\left(x^{*}\right)\right\Vert ^{2}=\left\Vert \bar{\bar{v}}_{k+1}-v_{k}\right\Vert ^{2}-2\left\langle \bar{\bar{v}}_{k+1}-v_{k},\log_{y_{k}}\left(x^{*}\right)-v_{k}\right\rangle.
	\]
	Thus, we obtain
	\begin{align*}
		0 & \geq\lambda_{k}^{2}\left(f\left(x_{k+1}\right)-f\left(x^{*}\right)\right)-\lambda_{k-1}^{2}\left(f\left(x_{k}\right)-f\left(x^{*}\right)\right)\\
		& \quad+\frac{\xi}{2s}\left(\left\Vert \bar{\bar{v}}_{k+1}-\log_{y_{k}}\left(x^{*}\right)\right\Vert ^{2}-\left\Vert v_{k}-\log_{y_{k}}\left(x^{*}\right)\right\Vert ^{2}+(\xi-1)\left\Vert \bar{\bar{v}}_{k+1}\right\Vert ^{2}-(\xi-1)\left\Vert v_{k}\right\Vert ^{2}\right).
	\end{align*}
	
	(Step 2: Handle metric distortion).
	By \cref{lem:distortion2} with $p_A=y_k$, $p_B=x_{k+1}$, $x=x^{*}$, $v_A=\bar{\bar{v}}_{k+1}$, $v_B=\bar{v}_{k+1}$, $a=v_k$, $b=-\gamma_k \grad f\left(y_k\right)=-\frac{s\lambda_k}{\xi}\grad f\left(y_k\right)$, $r=\frac{s}{\gamma_k}=\frac{\xi}{\lambda_k}\in(0,1)$, we have
	\begin{align*}
		& \left\Vert \log_{x_{k+1}}\left(x^{*}\right)-\bar{v}_{k+1}\right\Vert _{x_{k+1}}^{2}+(\xi-1)\left\Vert \bar{v}_{k+1}\right\Vert _{x_{k+1}}^{2}\\
		& \leq\left\Vert \log_{y_{k}}\left(x^{*}\right)-\bar{\bar{v}}_{k+1}\right\Vert _{y_{k}}^{2}+(\xi-1)\left\Vert \bar{\bar{v}}_{k+1}\right\Vert _{y_{k}}^{2}+\frac{\xi-\delta}{2}\left(\frac{1}{1-\xi/\lambda_k}-1\right)\left\Vert v_{k}\right\Vert _{y_{k}}^{2}.
	\end{align*}
	It follows from \cref{lem:distortion1} with $p_A=x_k$, $p_B=y_k$, $x=x^{*}$, $v_A=\bar{v}_{k}$, $v_B=v_{k}$, $r=\tau_k=\frac{\xi}{\lambda_k+\xi-1}$ that
	\begin{align*}
		\left\Vert \log_{x_{k}}\left(x^{*}\right)-\bar{v}_{k}\right\Vert _{x_{k}}^{2}+(\xi-1)\left\Vert \bar{v}_{k}\right\Vert _{x_{k}}^{2} & =\left(\left\Vert \log_{x_{k}}\left(x^{*}\right)-\bar{v}_{k}\right\Vert _{x_{k}}^{2}+(\zeta-1)\left\Vert \bar{v}_{k}\right\Vert _{x_{k}}^{2}\right)+(\xi-\zeta)\left\Vert \bar{v}_{k}\right\Vert _{x_{k}}^{2}\\
		& \geq\left(\left\Vert \log_{y_{k}}\left(x^{*}\right)-v_{k}\right\Vert _{y_{k}}^{2}+(\zeta-1)\left\Vert v_{k}\right\Vert _{y_{k}}^{2}\right)+(\xi-\zeta)\left\Vert \bar{v}_{k}\right\Vert _{x_{k}}^{2}\\
		& =\left\Vert \log_{y_{k}}\left(x^{*}\right)-v_{k}\right\Vert _{y_{k}}^{2}+(\zeta-1)\left\Vert v_{k}\right\Vert _{y_{k}}^{2}+(\xi-\zeta)\frac{1}{\left(1-\tau_{k}\right)^{2}}\left\Vert v_{k}\right\Vert _{y_{k}}^{2}\\
		& =\left\Vert \log_{y_{k}}\left(x^{*}\right)-v_{k}\right\Vert _{y_{k}}^{2}+(\xi-1)\left\Vert v_{k}\right\Vert _{y_{k}}^{2}+(\xi-\zeta)\left(\frac{1}{\left(1-\tau_{k}\right)^{2}}-1\right)\left\Vert v_{k}\right\Vert _{y_{k}}^{2},
	\end{align*}
	Combining these inequalities with the result in Step~1 gives
	\begin{align*}
		0 & \geq s\lambda_{k}^{2}\left(f\left(x_{k+1}\right)-f\left(x^{*}\right)\right)-\lambda_{k-1}^{2}\left(f\left(x_{k}\right)-f\left(x^{*}\right)\right)\\
		& \quad+\frac{\xi}{2}\left(\left\Vert \bar{\bar{v}}_{k+1}-\log_{y_{k}}\left(x^{*}\right)\right\Vert ^{2}+(\xi-1)\left\Vert \bar{\bar{v}}_{k+1}\right\Vert ^{2}-\left\Vert v_{k}-\log_{y_{k}}\left(x^{*}\right)\right\Vert ^{2}-(\xi-1)\left\Vert v_{k}\right\Vert ^{2}\right).\\
		& \quad+\frac{\xi}{2}\left[\left\Vert \log_{x_{k+1}}\left(x^{*}\right)-\bar{v}_{k+1}\right\Vert _{x_{k+1}}^{2}+(\xi-1)\left\Vert \bar{v}_{k+1}\right\Vert _{x_{k+1}}^{2}\right.\\
		& \quad\quad\left.-\left\Vert \log_{y_{k}}\left(x^{*}\right)-\bar{\bar{v}}_{k+1}\right\Vert _{y_{k}}^{2}-(\xi-1)\left\Vert \bar{\bar{v}}_{k+1}\right\Vert _{y_{k}}^{2}-\frac{\xi-\delta}{2}\left(\frac{1}{1-\xi/\lambda_k}-1\right)\left\Vert v_{k}\right\Vert _{y_{k}}^{2}\right]\\
		& \quad+\frac{\xi}{2}\left[\left\Vert \log_{y_{k}}\left(x^{*}\right)-v_{k}\right\Vert _{y_{k}}^{2}+(\xi-1)\left\Vert v_{k}\right\Vert _{y_{k}}^{2}+(\xi-\zeta)\left(\frac{1}{\left(1-\tau_{k}\right)^{2}}-1\right)\left\Vert v_{k}\right\Vert _{y_{k}}^{2}\right.\\
		& \quad\quad\left.-\left\Vert \log_{x_{k}}\left(x^{*}\right)-\bar{v}_{k}\right\Vert _{x_{k}}^{2}-(\xi-1)\left\Vert \bar{v}_{k}\right\Vert _{x_{k}}^{2}\right]\\
		& =\phi_{k+1}-\phi_{k}+\frac{\xi}{2}\left((\xi-\zeta)\left(\frac{1}{\left(1-\tau_{k}\right)^{2}}-1\right)-\frac{\xi-\delta}{2}\left(\frac{1}{1-\xi/\lambda_k}-1\right)\right)\left\Vert v_{k}\right\Vert _{y_{k}}^{2}\\
		& \geq\phi_{k+1}-\phi_{k}.
	\end{align*}
\end{proof}

\rnagccor*

\begin{proof}
	(Step 1: Checking the condition for \cref{thm:rnag-c}).
	A straightforward calculation shows that
	\[
	2\left(\frac{1}{1-t}-1\right)\leq3\left(\frac{1}{1-(3/4)t}-1\right).
	\]
	for all $t\in(0,1/3]$. For convenience, let $r=\frac{s}{\gamma_{k}}=\frac{\xi}{\lambda_{k}}=\frac{2\xi}{k+6\xi}\in(0,1/3]$.
	Then, $\tau_{k}=\frac{\xi}{\lambda_{k}+(\xi-1)}=\frac{2\xi}{k+6\xi+2(\xi-1)}\geq\frac{2\xi}{k+8\xi}\geq\frac{2\xi}{\frac{4}{3}(k+6\xi)}=\frac{3}{4}r$. Now, we have
	\begin{align*}
		(\xi-\zeta)\left(\frac{1}{\left(1-\tau_{k}\right)^{2}}-1\right) & \geq(\xi-\zeta)\left(\frac{1}{1-\tau_{k}}-1\right)\\
		& \geq(\xi-\zeta)\left(\frac{1}{1-\frac{3}{4}r}-1\right)\\
		& \geq\frac{\xi-\delta}{2}\left(\frac{1}{1-r}-1\right).
	\end{align*}
	
	(Step 2: Computing iteration complexity).
	By \cref{thm:rnag-c}, we have
	\[
	f\left(x_{k}\right)-f\left(x^{*}\right)\leq\frac{\phi_{k}}{s\lambda_{k-1}^{2}}\leq\frac{\phi_{0}}{s\lambda_{k-1}^{2}}=\frac{1}{s\lambda_{k-1}^{2}}\left(s\lambda_{-1}^{2}\left(f\left(x_{0}\right)-f\left(x^{*}\right)\right)+\frac{\xi}{2}\left\Vert \log_{x_{0}}\left(x^{*}\right)\right\Vert ^{2}\right).
	\]
	It follows from the geodesic $\frac{1}{s}$-smoothness of $f$ that
	\begin{align*}
		f\left(x_{k}\right)-f\left(x^{*}\right) & \leq\frac{1}{s\lambda_{k-1}^{2}}\left(s\lambda_{-1}^{2}\frac{1}{2s}\left\Vert \log_{x_{0}}\left(x^{*}\right)\right\Vert ^{2}+\frac{\xi}{2}\left\Vert \log_{x_{0}}\left(x^{*}\right)\right\Vert ^{2}\right)\\
		& =\frac{1}{s\lambda_{k-1}^{2}}\left(\frac{\lambda_{-1}^{2}}{2}+\frac{\xi}{2}\right)\left\Vert \log_{x_{0}}\left(x^{*}\right)\right\Vert ^{2}\\
		& =\frac{4L}{(k-1+6\xi)^{2}}\left(\frac{(6\xi-1)^2}{8}+\frac{\xi}{2}\right)\left\Vert \log_{x_{0}}\left(x^{*}\right)\right\Vert ^{2}\\
		& \leq\frac{4L}{(k-1)^{2}}\left(\frac{(6\xi-1)^2}{8}+\frac{\xi}{2}\right)\left\Vert \log_{x_{0}}\left(x^{*}\right)\right\Vert ^{2}.
	\end{align*}
	Thus, we have $f\left(x_{k}\right)-f\left(x^{*}\right)\leq\epsilon$ whenever
	\[
	(k-1)^{2}\geq\frac{4L}{\epsilon}\left(\frac{(6\xi-1)^{2}}{8}+\frac{\xi}{2}\right)\left\Vert \log_{x_{0}}\left(x^{*}\right)\right\Vert ^{2}.
	\]
	This implies that RNAG-C has an $O\left(\xi\sqrt{\frac{L}{\epsilon}}\right)$ iteration complexity.
\end{proof}

\section{Convergence Analysis for RNAG-SC}
\label{app:rnag-sc}

\rnagscthm*

\begin{proof}
	(Step 1).
	In this step, $\langle\cdot,\cdot\rangle$ and $\Vert\cdot\Vert$ always denote the inner product and the norm on $T_{y_k}M$. Set $q=\mu s$. It is straightforward to check that $\grad f\left(y_{k}\right)=\mu\frac{1-\sqrt{q/\xi}}{\sqrt{q/\xi}}v_{k}-\mu\frac{1}{\sqrt{q/\xi}}\bar{\bar{v}}_{k+1}$
	and $\log_{y_{k}}\left(x_{k}\right)=-\sqrt{\xi q}v_{k}$.\footnote{Note that $y_{k}=\exp_{x_{k}}\left(\frac{\sqrt{\xi q}}{1+\sqrt{\xi q}}\bar{v}_{k}\right)$
		and $v_{k}=\Gamma_{x_{k}}^{y_{k}}\left(\bar{v}_{k}-\log_{x_{k}}\left(y_{k}\right)\right)=\Gamma_{x_{k}}^{y_{k}}\left(\left(1-\frac{\sqrt{\xi q}}{1+\sqrt{\xi q}}\right)\bar{v}_{k}\right)$.
		Let $\gamma_{1}$ be the geodesic such that $\gamma_{1}(0)=x_{k}$
		and $\gamma_{1}(1)=y_{k}$, then $\gamma_{1}'(0)=\log_{x_{k}}\left(y_{k}\right)$.
		Let $\gamma_{2}$ be the geodesic defined as $\gamma_{2}(t)=\gamma_{1}(1-t)$.
		Then $\log_{y_{k}}\left(x_{k}\right)=\gamma_{2}'(0)=-\gamma_{1}'(1)=-\Gamma_{x_{k}}^{y_{k}}\left(\gamma_{1}'(0)\right)=-\Gamma_{x_{k}}^{y_{k}}\left(\log_{x_{k}}\left(y_{k}\right)\right)$.
		Now, we have $\log_{y_{k}}\left(x_{k}\right)=-\Gamma_{x_{k}}^{y_{k}}\left(\log_{x_{k}}\left(y_{k}\right)\right)=-\frac{\sqrt{\xi q}}{1+\sqrt{\xi q}}\Gamma_{x_{k}}^{y_{k}}\left(\bar{v}_{k}\right)=-\frac{\frac{\sqrt{\xi q}}{1+\sqrt{\xi q}}}{1-\frac{\sqrt{\xi q}}{1+\sqrt{\xi q}}}{\color{red}v_k}={\color{red}-}\sqrt{\xi q}v_{k}$.} By geodesic $\mu$-strong convexity of $f$, we have
	\begin{align*}
		f\left(x^{*}\right) & \geq f\left(y_{k}\right)+\left\langle \grad f\left(y_{k}\right),\log_{y_{k}}\left(x^{*}\right)\right\rangle +\frac{\mu}{2}\left\Vert \log_{y_{k}}\left(x^{*}\right)\right\Vert ^{2}\\
		& =f\left(y_{k}\right)+\mu\frac{1-\sqrt{q/\xi}}{\sqrt{q/\xi}}\left\langle v_{k},\log_{y_{k}}\left(x^{*}\right)\right\rangle -\mu\frac{1}{\sqrt{q/\xi}}\left\langle \bar{\bar{v}}_{k+1},\log_{y_{k}}\left(x^{*}\right)\right\rangle +\frac{\mu}{2}\left\Vert \log_{y_{k}}\left(x^{*}\right)\right\Vert ^{2}.
	\end{align*}
	It follows from the geodesic convexity of $f$ that
	\begin{align*}
		f\left(x_{k}\right) & \geq f\left(y_{k}\right)+\left\langle \grad f\left(y_{k}\right),\log_{y_{k}}\left(x_{k}\right)\right\rangle \\
		& =f\left(y_{k}\right)-\xi\mu\left(1-\sqrt{\frac{q}{\xi}}\right)\left\Vert v_{k}\right\Vert ^{2}+\xi\mu\left\langle v_{k},\bar{\bar{v}}_{k+1}\right\rangle .
	\end{align*}
	By the geodesic $\frac{1}{s}$-smoothness of $f$, we have
	\begin{align*}
		f\left(x_{k+1}\right) & \leq f\left(y_{k}\right)+\left\langle \grad f\left(y_{k}\right),\log_{y_{k}}\left(x_{k+1}\right)\right\rangle +\frac{1}{2s}\left\Vert \log_{y_{k}}\left(x_{k+1}\right)\right\Vert ^{2}\\
		& =f\left(y_{k}\right)-\frac{s}{2}\left\Vert \grad f\left(y_k\right)\right\Vert^2\\
		& =f\left(y_{k}\right)-\frac{s}{2}\left\Vert \mu\frac{1-\sqrt{q/\xi}}{\sqrt{q/\xi}}v_{k}-\mu\frac{1}{\sqrt{q/\xi}}\bar{\bar{v}}_{k+1}\right\Vert ^{2}\\
		& =f\left(y_{k}\right)-\frac{\xi\mu}{2}\left(1-\sqrt{\frac{q}{\xi}}\right)^{2}\left\Vert v_{k}\right\Vert ^{2}+\xi\mu\left(1-\sqrt{\frac{q}{\xi}}\right)\left\langle v_{k},\bar{\bar{v}}_{k+1}\right\rangle -\frac{\xi\mu}{2}\left\Vert \bar{\bar{v}}_{k+1}\right\Vert ^{2}.
	\end{align*}
	Taking a weighted sum of these inequalities yields
	\begin{align*}
		0 & \geq\sqrt{\frac{q}{\xi}}\left[f\left(y_{k}\right)-f\left(x^{*}\right)+\mu\frac{1-\sqrt{q/\xi}}{\sqrt{q/\xi}}\left\langle v_{k},\log_{y_{k}}\left(x^{*}\right)\right\rangle -\mu\frac{1}{\sqrt{q/\xi}}\left\langle \bar{\bar{v}}_{k+1},\log_{y_{k}}\left(x^{*}\right)\right\rangle +\frac{\mu}{2}\left\Vert \log_{y_{k}}\left(x^{*}\right)\right\Vert ^{2}\right]\\
		& \quad+\left(1-\sqrt{\frac{q}{\xi}}\right)\left[f\left(y_{k}\right)-f\left(x_{k}\right)-\xi\mu\left(1-\sqrt{\frac{q}{\xi}}\right)\left\Vert v_{k}\right\Vert ^{2}+\xi\mu\left\langle v_{k},\bar{\bar{v}}_{k+1}\right\rangle \right]\\
		& \quad+\left[f\left(x_{k+1}\right)-f\left(y_{k}\right)+\frac{\xi\mu}{2}\left(1-\sqrt{\frac{q}{\xi}}\right)^{2}\left\Vert v_{k}\right\Vert ^{2}-\xi\mu\left(1-\sqrt{\frac{q}{\xi}}\right)\left\langle v_{k},\bar{\bar{v}}_{k+1}\right\rangle +\frac{\xi\mu}{2}\left\Vert \bar{\bar{v}}_{k+1}\right\Vert ^{2}\right]\\
		& =\left(f\left(x_{k+1}\right)-f\left(x^{*}\right)\right)-\left(1-\sqrt{\frac{q}{\xi}}\right)\left(f\left(x_{k}\right)-f\left(x^{*}\right)\right)\\
		& \quad+\mu\left(1-\sqrt{\frac{q}{\xi}}\right)\left\langle v_{k},\log_{y_{k}}\left(x^{*}\right)\right\rangle -\mu\left\langle \bar{\bar{v}}_{k+1},\log_{y_{k}}\left(x^{*}\right)\right\rangle +\frac{\mu}{2}\sqrt{\frac{q}{\xi}}\left\Vert \log_{y_{k}}\left(x^{*}\right)\right\Vert ^{2}\\
		& \quad-\frac{\xi\mu}{2}\left(1-\sqrt{\frac{q}{\xi}}\right)^{2}\left\Vert v_{k}\right\Vert ^{2}+\frac{\xi\mu}{2}\left\Vert \bar{\bar{v}}_{k+1}\right\Vert ^{2}.
	\end{align*}
	We further notice that
	\begin{align*}
		& \left\Vert \bar{\bar{v}}_{k+1}-\log_{y_{k}}\left(x^{*}\right)\right\Vert ^{2}-\left(1-\sqrt{\frac{q}{\xi}}\right)\left\Vert v_{k}-\log_{y_{k}}\left(x^{*}\right)\right\Vert ^{2}\\
		& =\left\Vert \bar{\bar{v}}_{k+1}\right\Vert ^{2}-2\left\langle \bar{\bar{v}}_{k+1},\log_{y_{k}}\left(x^{*}\right)\right\rangle -\left(1-\sqrt{\frac{q}{\xi}}\right)\left\Vert v_{k}\right\Vert ^{2}+2\left(1-\sqrt{\frac{q}{\xi}}\right)\left\langle v_{k},\log_{y_{k}}\left(x^{*}\right)\right\rangle +\sqrt{\frac{q}{\xi}}\left\Vert \log_{y_{k}}\left(x^{*}\right)\right\Vert ^{2}.
	\end{align*}
	Therefore, we obtain
	\begin{align*}
		0 & \geq\left(f\left(x_{k+1}\right)-f\left(x^{*}\right)+\frac{\mu}{2}\left\Vert \bar{\bar{v}}_{k+1}-\log_{y_{k}}\left(x^{*}\right)\right\Vert ^{2}\right)-\left(1-\sqrt{\frac{q}{\xi}}\right)\left(f\left(x_{k}\right)-f\left(x^{*}\right)+\frac{\mu}{2}\left\Vert v_{k}-\log_{y_{k}}\left(x^{*}\right)\right\Vert ^{2}\right)\\
		& \quad+(\xi-1)\frac{\mu}{2}\left\Vert \bar{\bar{v}}_{k+1}\right\Vert ^{2}-\frac{\xi\mu}{2}\left(1-\sqrt{\frac{q}{\xi}}\right)^{2}\left\Vert v_{k}\right\Vert ^{2}+\frac{\mu}{2}\left(1-\sqrt{\frac{q}{\xi}}\right)\left\Vert v_{k}\right\Vert ^{2}\\
		& =\left(f\left(x_{k+1}\right)-f\left(x^{*}\right)+\frac{\mu}{2}\left\Vert \bar{\bar{v}}_{k+1}-\log_{y_{k}}\left(x^{*}\right)\right\Vert ^{2}\right)-\left(1-\sqrt{\frac{q}{\xi}}\right)\left(f\left(x_{k}\right)-f\left(x^{*}\right)+\frac{\mu}{2}\left\Vert v_{k}-\log_{y_{k}}\left(x^{*}\right)\right\Vert ^{2}\right)\\
		& \quad+(\xi-1)\frac{\mu}{2}\left\Vert \bar{\bar{v}}_{k+1}\right\Vert ^{2}-(\xi-1)\frac{\mu}{2}\left(1-\sqrt{\frac{q}{\xi}}\right)\left\Vert v_{k}\right\Vert ^{2}+\frac{\xi\mu}{2}\sqrt{\frac{q}{\xi}}\left(1-\sqrt{\frac{q}{\xi}}\right)\left\Vert v_{k}\right\Vert ^{2}\\
		& =\left(f\left(x_{k+1}\right)-f\left(x^{*}\right)+\frac{\mu}{2}\left\Vert \bar{\bar{v}}_{k+1}-\log_{y_{k}}\left(x^{*}\right)\right\Vert ^{2}+(\xi-1)\frac{\mu}{2}\left\Vert \bar{\bar{v}}_{k+1}\right\Vert ^{2}\right)\\
		& \quad-\left(1-\sqrt{\frac{q}{\xi}}\right)\left(f\left(x_{k}\right)-f\left(x^{*}\right)+\frac{\mu}{2}\left\Vert v_{k}-\log_{y_{k}}\left(x^{*}\right)\right\Vert ^{2}+(\xi-1)\frac{\mu}{2}\left\Vert v_{k}\right\Vert ^{2}\right)\\
		& \quad+\frac{\xi\mu}{2}\sqrt{\frac{q}{\xi}}\left(1-\sqrt{\frac{q}{\xi}}\right)\left\Vert v_{k}\right\Vert ^{2}.
	\end{align*}
	
	(Step 2: Handle metric distortion).
	It follows from \cref{lem:distortion2} with $p_A=y_k$, $p_B=x_{k+1}$, $x=x^{*}$, $v_A=\bar{\bar{v}}_{k+1}$, $v_B=\bar{v}_{k+1}$, $a=\left(1-\sqrt{\frac{q}{\xi}}\right)v_k$, $b=\sqrt{\frac{q}{\xi}}\left(-\frac{1}{\mu}\right)\grad f\left(y_k\right)$, $r=\sqrt{\xi q}$ that
	\begin{align*}
		& \left\Vert \log_{x_{k+1}}\left(x^{*}\right)-\bar{v}_{k+1}\right\Vert_{x_{k+1}} ^{2}+(\xi-1)\left\Vert \bar{v}_{k+1}\right\Vert_{x_{k+1}} ^{2}\\
		& \leq\left\Vert \log_{y_{k}}\left(x^{*}\right)-\bar{\bar{v}}_{k+1}\right\Vert_{y_k} ^{2}+(\xi-1)\left\Vert \bar{\bar{v}}_{k+1}\right\Vert_{y_k} ^{2}+\frac{\xi-\delta}{2}\left(\frac{1}{1-\sqrt{\xi q}}-1\right)\left\Vert \left(1-\sqrt{\frac{q}{\xi}}\right)v_{k}\right\Vert_{y_k} ^{2}.
	\end{align*}
	Applying \cref{lem:distortion1} with $p_A=x_{k}$, $p_B=y_{k}$, $x=x^{*}$, $v_A=\bar{v}_{k}$, $v_B=v_{k}$, $r=\frac{\sqrt{\xi q}}{1+\sqrt{\xi q}}$ gives
	\begin{align*}
		& \left\Vert \log_{x_{k}}\left(x^{*}\right)-\bar{v}_{k}\right\Vert _{x_{k}}^{2}+(\xi-1)\left\Vert \bar{v}_{k}\right\Vert _{x_{k}}^{2}\\
		& =\left(\left\Vert \log_{x_{k}}\left(x^{*}\right)-\bar{v}_{k}\right\Vert _{x_{k}}^{2}+(\zeta-1)\left\Vert \bar{v}_{k}\right\Vert _{x_{k}}^{2}\right)+(\xi-\zeta)\left\Vert \bar{v}_{k}\right\Vert _{x_{k}}^{2}\\
		& \geq\left(\left\Vert \log_{y_{k}}\left(x^{*}\right)-v_{k}\right\Vert _{y_{k}}^{2}+(\zeta-1)\left\Vert v_{k}\right\Vert _{y_{k}}^{2}\right)+(\xi-\zeta)\left\Vert \bar{v}_{k}\right\Vert _{x_{k}}^{2}\\
		& =\left\Vert \log_{y_{k}}\left(x^{*}\right)-v_{k}\right\Vert _{y_{k}}^{2}+(\zeta-1)\left\Vert v_{k}\right\Vert _{y_{k}}^{2}+(\xi-\zeta)\frac{1}{\left(1-\frac{\sqrt{\xi q}}{1+\sqrt{\xi q}}\right)^{2}}\left\Vert v_{k}\right\Vert _{y_{k}}^{2}\\
		& =\left\Vert \log_{y_{k}}\left(x^{*}\right)-v_{k}\right\Vert _{y_{k}}^{2}+(\xi-1)\left\Vert v_{k}\right\Vert _{y_{k}}^{2}+(\xi-\zeta)\left(\frac{1}{\left(1-\frac{\sqrt{\xi q}}{1+\sqrt{\xi q}}\right)^{2}}-1\right)\left\Vert v_{k}\right\Vert _{y_{k}}^{2}
	\end{align*}
	
	Combining these inequalities with the result in Step~1 gives
	\begin{align*}
		0 & \geq\left(f\left(x_{k+1}\right)-f\left(x^{*}\right)+\frac{\mu}{2}\left\Vert \bar{\bar{v}}_{k+1}-\log_{y_{k}}\left(x^{*}\right)\right\Vert _{y_{k}}^{2}+(\xi-1)\frac{\mu}{2}\left\Vert \bar{\bar{v}}_{k+1}\right\Vert _{y_{k}}^{2}\right)\\
		& \quad-\left(1-\sqrt{\frac{q}{\xi}}\right)\left(f\left(x_{k}\right)-f\left(x^{*}\right)+\frac{\mu}{2}\left\Vert v_{k}-\log_{y_{k}}\left(x^{*}\right)\right\Vert _{y_{k}}^{2}+(\xi-1)\frac{\mu}{2}\left\Vert v_{k}\right\Vert _{y_{k}}^{2}\right)\\
		& \quad+\frac{\mu}{2}\sqrt{\xi q}\left(1-\sqrt{\frac{q}{\xi}}\right)\left\Vert v_{k}\right\Vert _{y_{k}}^{2}\\
		& \quad+\frac{\mu}{2}\left[\left\Vert \log_{x_{k+1}}\left(x^{*}\right)-\bar{v}_{k+1}\right\Vert _{x_{k+1}}^{2}+(\xi-1)\left\Vert \bar{v}_{k+1}\right\Vert _{x_{k+1}}^{2}\right.\\
		& \quad\quad-\left.\left\Vert \log_{y_{k}}\left(x^{*}\right)-\bar{\bar{v}}_{k+1}\right\Vert _{y_{k}}^{2}-(\xi-1)\left\Vert \bar{\bar{v}}_{k+1}\right\Vert _{y_{k}}^{2}-\frac{\xi-\delta}{2}\left(\frac{1}{1-\sqrt{\xi q}}-1\right)\left\Vert \left(1-\sqrt{\frac{q}{\xi}}\right)v_{k}\right\Vert _{y_{k}}^{2}\right]\\
		& \quad+\frac{\mu}{2}{\color{red}\left(1-\sqrt{\frac{q}{\xi}}\right)}\left[\left\Vert \log_{y_{k}}\left(x^{*}\right)-v_{k}\right\Vert _{y_{k}}^{2}+(\xi-1)\left\Vert v_{k}\right\Vert _{y_{k}}^{2}+(\xi-\zeta)\left(\frac{1}{\left(1-\frac{\sqrt{\xi q}}{1+\sqrt{\xi q}}\right)^{2}}-1\right)\left\Vert v_{k}\right\Vert _{y_{k}}^{2}\right.\\
		& \quad\quad-\left.\left\Vert \log_{x_{k}}\left(x^{*}\right)-\bar{v}_{k}\right\Vert _{x_{k}}^{2}-(\xi-1)\left\Vert \bar{v}_{k}\right\Vert _{x_{k}}^{2}\right]\\
		& =\left(1-\sqrt{\frac{q}{\xi}}\right)^{k+1}\left(\phi_{k+1}-\phi_{k}\right)\\
		& \quad+\frac{\mu}{2}\left(\sqrt{\xi q}\left(1-\sqrt{\frac{q}{\xi}}\right)-\frac{\xi-\delta}{2}\left(\frac{1}{1-\sqrt{\xi q}}-1\right)\left(1-\sqrt{\frac{q}{\xi}}\right)^{2}+(\xi-\zeta){\color{red}\left(1-\sqrt{\frac{q}{\xi}}\right)}\left(\frac{1}{\left(1-\frac{\sqrt{\xi q}}{1+\sqrt{\xi q}}\right)^{2}}-1\right)\right)\left\Vert v_{k}\right\Vert ^{2}\\
		& \geq\left(1-\sqrt{\frac{q}{\xi}}\right)^{k+1}\left(\phi_{k+1}-\phi_{k}\right).
	\end{align*}
\end{proof}

\rnagsccor*

\begin{proof}
	(Step 1: Checking the condition for \cref{thm:rnag-sc}).
	
	It is straightforward to check that
	\[
	\frac{\xi-\delta}{2}\left(\frac{1}{1-t}-1\right)\leq(\xi-\zeta)\left(\frac{1}{1-\frac{t}{1+t}}-1\right)
	\]
	for all $t\in(0,1/3]$. Because $\sqrt{\xi q}=\sqrt{\xi\mu\frac{1}{9\xi L}}=\frac{1}{3}\sqrt{\mu/L}\in(0,1/3]$, we have
	\begin{align*}
		(\xi-\zeta){\color{red}\left(1-\sqrt{\frac{q}{\xi}}\right)}\left(\frac{1}{\left(1-\frac{\sqrt{\xi q}}{1+\sqrt{\xi q}}\right)^{2}}-1\right) & \geq(\xi-\zeta){\color{red}\left(1-\sqrt{\frac{q}{\xi}}\right)}\left(\frac{1}{\left(1-\frac{\sqrt{\xi q}}{1+\sqrt{\xi q}}\right)}-1\right)\\
		& \geq\frac{\xi-\delta}{2}{\color{red}\left(1-\sqrt{\frac{q}{\xi}}\right)}\left(\frac{1}{1-\sqrt{\xi q}}-1\right).
	\end{align*}
	Because $\sqrt{\frac{q}{\xi}}\in (0,1)$, we have
	\begin{align*}
		\frac{\xi-\delta}{2}\left(\frac{1}{1-\sqrt{\xi q}}-1\right)\left(1-\sqrt{\frac{q}{\xi}}\right)^{2}-\sqrt{\xi q}\left(1-\sqrt{\frac{q}{\xi}}\right) & \leq\frac{\xi-\delta}{2}\left(\frac{1}{1-\sqrt{\xi q}}-1\right)\left(1-\sqrt{\frac{q}{\xi}}\right)^{2}\\
		& \leq\frac{\xi-\delta}{2}{\color{red}\left(1-\sqrt{\frac{q}{\xi}}\right)}\left(\frac{1}{1-\sqrt{\xi q}}-1\right).
	\end{align*}
	Combining these inequalities gives the desired condition.
	
	(Step 2: Computing iteration complexity).
	It follows from \cref{thm:rnag-c} that
	\[
	f\left(x_{k}\right)-f\left(x^{*}\right)\leq\left(1-\sqrt{\frac{q}{\xi}}\right)^{k}\phi_{k}\leq\left(1-\sqrt{\frac{q}{\xi}}\right)^{k}\phi_{0}=\left(1-\sqrt{\frac{q}{\xi}}\right)^{k}\left(f\left(x_{0}\right)-f\left(x^{*}\right)+\frac{\mu}{2}\left\Vert \log_{x_{0}}\left(x^{*}\right)\right\Vert ^{2}\right).
	\]
	By the geodesic $L$-smoothness of $f$, we have
	\begin{align*}
		f\left(x_{k}\right)-f\left(x^{*}\right) & \leq\left(1-\sqrt{\frac{q}{\xi}}\right)^{k}\left(\frac{L}{2}\left\Vert \log_{x_{0}}\left(x^{*}\right)\right\Vert ^{2}+\frac{\mu}{2}\left\Vert \log_{x_{0}}\left(x^{*}\right)\right\Vert ^{2}\right)\\
		& \leq\left(1-\sqrt{\frac{q}{\xi}}\right)^{k}L\left\Vert \log_{x_{0}}\left(x^{*}\right)\right\Vert ^{2}\\
		& =\left(1-\sqrt{\frac{\mu}{9\xi^{2}L}}\right)^{k}L\left\Vert \log_{x_{0}}\left(x^{*}\right)\right\Vert ^{2}\\
		& \leq e^{-\sqrt{\frac{\mu}{9\xi^{2}L}}k}L\left\Vert \log_{x_{0}}\left(x^{*}\right)\right\Vert ^{2}\\
		& {\color{red}\raisebox{1ex}{\underline{\smash{\raisebox{-1ex}{$\leq e^{-\sqrt{\frac{\mu}{9\xi^{2}L}}k}L\left\Vert \log_{x_{0}}\left(x^{*}\right)\right\Vert ^{2}.$}}}}}
	\end{align*}
	Thus, we have $f\left(x_{k}\right)-f\left(x^{*}\right)\leq\epsilon$ whenever
	\[
	k\geq\sqrt{\frac{9\xi^{2}L}{\mu}}\log\left(\frac{L}{\epsilon}\left\Vert \log_{x_{0}}\left(x^{*}\right)\right\Vert ^{2}\right),
	\]
	which implies the $O\left(\xi\sqrt{\frac{L}{\mu}}\log\frac{L}{\epsilon}\right)$ iteration complexity of RNAG-SC.
\end{proof}

\section{Continuous-Time Interpretation}
\label{app:continuous}

\subsection{The g-convex case}

Because we approximate the curve $y(t)$ by the iterates $y_k$, we first rewrite RNAG-C in the form using only the iterates $y_k$ as follows:
\begin{align*}
	y_{k+1}-y_{k} & =x_{k+1}-y_{k}+\frac{\xi}{\lambda_{k+1}+\xi-1}\bar{v}_{k+1}\\
	& =-s\grad f\left(y_{k}\right)+\frac{\xi}{\lambda_{k+1}+\xi-1}\left(\bar{\bar{v}}_{k+1}+s\grad f\left(y_{k}\right)\right)\\
	& =-s\grad f\left(y_{k}\right)+\frac{\xi}{\lambda_{k+1}+\xi-1}\left(v_{k}-\frac{s\lambda_{k}}{\xi}\grad f\left(y_{k}\right)+s\grad f\left(y_{k}\right)\right)\\
	& =\left(-1+\frac{-\lambda_{k}+\xi}{\lambda_{k-1}+\xi-1}\right)s\grad f\left(y_{k}\right)+\frac{\xi}{\lambda_{k+1}+(\xi-1)}\frac{\lambda_{k}-1}{\xi}\left(y_{k}-x_{k}\right)\\
	& =\frac{1-\lambda_{k}-\lambda_{k+1}}{\lambda_{k+1}+(\xi-1)}s\grad f\left(y_{k}\right)+\frac{\lambda_{k}-1}{\lambda_{k+1}+(\xi-1)}\left(y_{k}-x_{k}\right)\\
	& =\frac{1-\lambda_{k}-\lambda_{k+1}}{\lambda_{k+1}+(\xi-1)}s\grad f\left(y_{k}\right)+\frac{\lambda_{k}-1}{\lambda_{k+1}+(\xi-1)}\left(y_{k}-y_{k-1}+s\grad f\left(y_{k-1}\right)\right)\\
	& =\frac{\lambda_{k}-1}{\lambda_{k+1}+(\xi-1)}\left(y_{k}-y_{k-1}\right)-\frac{\lambda_{k+1}}{\lambda_{k+1}+(\xi-1)}s\grad f\left(y_{k}\right)\\
	& \quad+\frac{\lambda_{k}-1}{\lambda_{k+1}+(\xi-1)}s\left(\grad f\left(y_{k-1}\right)-\grad f\left(y_{k}\right)\right)
\end{align*}
We introduce a smooth curve $y(t)$ as mentioned in \cref{sec:continuous}. Now, dividing both sides of the above equality by $\sqrt{s}$ and substituting 
\begin{align*}
	\frac{y_{k+1}-y_{k}}{\sqrt{s}} & =\dot{y}+\frac{\sqrt{s}}{2}\ddot{y}+o\left(\sqrt{s}\right)\\
	\frac{y_{k}-y_{k-1}}{\sqrt{s}} & =\dot{y}-\frac{\sqrt{s}}{2}\ddot{y}+o\left(\sqrt{s}\right)\\
	\sqrt{s}\grad f\left(y_{k-1}\right) & =\sqrt{s}\grad f\left(y_{k}\right)+o\left(\sqrt{s}\right),
\end{align*}
we obtain
\[
\dot{y}+\frac{\sqrt{s}}{2}\ddot{y}+o\left(\sqrt{s}\right)=\frac{\lambda_{k}-1}{\lambda_{k+1}+(\xi-1)}\left(\dot{y}-\frac{\sqrt{s}}{2}\ddot{y}+o\left(\sqrt{s}\right)\right)-\frac{\lambda_{k+1}}{\lambda_{k+1}+(\xi-1)}\sqrt{s}\grad f(y).
\]
Dividing both sides by $\sqrt{s}$ and rearranging terms, we have
\[
\frac{1}{2}\left(1+\frac{\lambda_{k}-1}{\lambda_{k+1}+(\xi-1)}\right)\ddot{y}+\frac{1}{\sqrt{s}}\left(1-\frac{\lambda_{k}-1}{\lambda_{k+1}+(\xi-1)}\right)\dot{y}+\frac{\lambda_{k+1}}{\lambda_{k+1}+(\xi-1)}\grad f(y)+\frac{o\left(\sqrt{s}\right)}{\sqrt{s}}=0.
\]
Substituting $k=\frac{t}{\sqrt{s}}$,  we can check that $\frac{\lambda_{k}-1}{\lambda_{k+1}+(\xi-1)}\rightarrow1$,
$\frac{\lambda_{k+1}}{\lambda_{k+1}+(\xi-1)}\rightarrow1$, and $\frac{1}{\sqrt{s}}\left(1-\frac{\lambda_{k}-1}{\lambda_{k+1}+(\xi-1)}\right)=\frac{1}{\sqrt{s}}\frac{\lambda_{k+1}-\lambda_{k}+\xi}{\lambda_{k+1}+(\xi-1)}=\frac{1}{\sqrt{s}}\frac{1+2\xi}{k+T+4\xi-{\color{red}1}}=\frac{1+2\xi}{t+(T+4\xi-{\color{red}1})\sqrt{s}}\rightarrow\frac{1+2\xi}{t}$ as $s\rightarrow0$. Therefore, we obtain
\[
\ddot{y}+\frac{1+2\xi}{t}\dot{y}+\grad f(y)=0.
\]

\subsection{The g-strongly convex case}

As we approximate the curve $y(t)$ by the iterates $y_k$, we first rewrite RNAG-C in the form using only the iterates $y_k$ as follows:
\begin{align*}
	y_{k+1}-y_{k} & =x_{k+1}-y_{k}+\frac{\sqrt{\xi\mu s}}{1+\sqrt{\xi\mu s}}\bar{v}_{k+1}\\
	& =-s\grad f\left(y_{k}\right)+\frac{\sqrt{\xi\mu s}}{1+\sqrt{\xi\mu s}}\left(\bar{\bar{v}}_{k+1}+s\grad f\left(y_{k}\right)\right)\\
	& =-\frac{s}{1+\sqrt{\xi\mu s}}\grad f\left(y_{k}\right)+\frac{\sqrt{\xi\mu s}}{1+\sqrt{\xi\mu s}}\left(\left(1-\sqrt{\frac{\mu s}{\xi}}\right)v_{k}+\sqrt{\frac{\mu s}{\xi}}\left(-\frac{\grad f\left(y_{k}\right)}{\mu}\right)\right)\\
	& =-\frac{2s}{1+\sqrt{\xi\mu s}}\grad f\left(y_{k}\right)+\frac{\sqrt{\xi\mu s}}{1+\sqrt{\xi\mu s}}\left(1-\sqrt{\frac{\mu s}{\xi}}\right)\frac{1}{\sqrt{\xi\mu s}}\left(y_{k}-x_{k}\right)\\
	& =-\frac{2s}{1+\sqrt{\xi\mu s}}\grad f\left(y_{k}\right)+\frac{1-\sqrt{\mu s/\xi}}{1+\sqrt{\xi\mu s}}\left(y_{k}-y_{k-1}+s\grad f\left(y_{k-1}\right)\right)\\
	& =\frac{1-\sqrt{\mu s/\xi}}{1+\sqrt{\xi\mu s}}\left(y_{k}-y_{k-1}\right)-\frac{1+\sqrt{\mu s/\xi}}{1+\sqrt{\xi\mu s}}s\grad f\left(y_{k}\right)+\frac{1-\sqrt{\mu s/\xi}}{1+\sqrt{\xi\mu s}}s\left(\grad f\left(y_{k-1}\right)-\grad f\left(y_{k}\right)\right)
\end{align*}
Dividing both sides by $\sqrt{s}$ and substituting
\begin{align*}
	\frac{y_{k+1}-y_{k}}{\sqrt{s}} & =\dot{y}+\frac{\sqrt{s}}{2}\ddot{y}+o\left(\sqrt{s}\right)\\
	\frac{y_{k}-y_{k-1}}{\sqrt{s}} & =\dot{y}-\frac{\sqrt{s}}{2}\ddot{y}+o\left(\sqrt{s}\right)\\
	\sqrt{s}\grad f\left(y_{k-1}\right) & =\sqrt{s}\grad f\left(y_{k}\right)+o\left(\sqrt{s}\right)
\end{align*}
yield
\[
\dot{y}+\frac{\sqrt{s}}{2}\ddot{y}+o\left(\sqrt{s}\right)=\frac{1-\sqrt{\mu s/\xi}}{1+\sqrt{\xi\mu s}}\left(\dot{y}-\frac{\sqrt{s}}{2}\ddot{y}+o\left(\sqrt{s}\right)\right)-\frac{1+\sqrt{\mu s/\xi}}{1+\sqrt{\xi\mu s}}\sqrt{s}\grad f\left(y_{\color{red}\raisebox{1ex}{\underline{\smash{\raisebox{-1ex}{\scriptsize $k$}}}}}\right).
\]
Dividing both sides by $\sqrt{s}$ and rearranging terms, we obtain
\[
\frac{1}{2}\left(1+\frac{1-\sqrt{\mu s/\xi}}{1+\sqrt{\xi\mu s}}\right)\ddot{y}+\frac{\left(\sqrt{1/\xi}+\sqrt{\xi}\right)\sqrt{\mu}}{1+\sqrt{\xi\mu s}}\dot{y}+\frac{1+\sqrt{\mu s/\xi}}{1+\sqrt{\xi\mu s}}\grad f\left(y_{k}\right)+\frac{o\left(\sqrt{s}\right)}{\sqrt{s}}=0.
\]
Taking the limit $s\rightarrow0$ gives
\[
\ddot{y}+\left(\frac{1}{\sqrt{\xi}}+\sqrt{\xi}\right)\sqrt{\mu}\dot{y}+\grad f(y)=0
\]
as desired.

\subsection{Experiments}

In this section, we empirically show that the iterates of our methods converge to the solution of the corresponding ODEs, as taking the limit $s\rightarrow0$. We use the Rayleigh quotient maximization problem in \cref{sec:experiments} with $d=10$ and $\xi=2$. For RNAG-SC, we set $\mu=0.1$ (note that the limiting argument above does not use geodesic $\mu$-strong convexity of $f$). To compute the solution of ODEs \eqref{eq:rnag-ode-c} and \eqref{eq:rnag-ode-sc}, we implement SIRNAG (Option I) \citep{alimisis2020continuous} with very small integration step size. The results are shown in \cref{fig:ode-c} and \cref{fig:ode-sc}.

\begin{figure*}[ht]
	\centering
	\subfigure[Error vs. \# of iterations]{\includegraphics[width=0.45\textwidth]{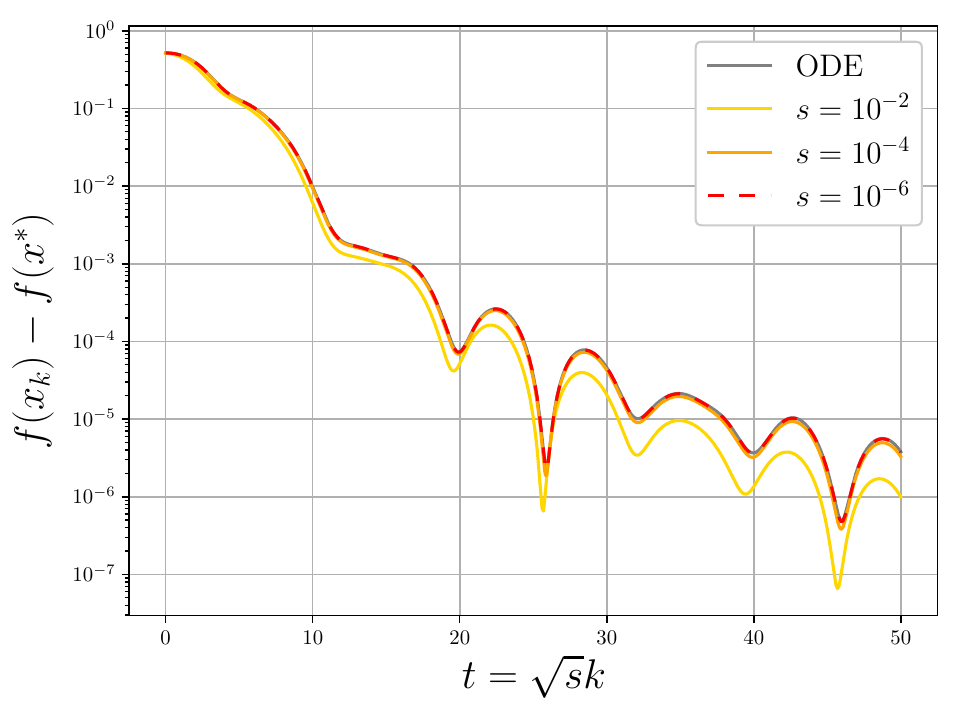}\label{fig:ode-c1}}
	\subfigure[Distance vs. \# of iterations]{\includegraphics[width=0.45\textwidth]{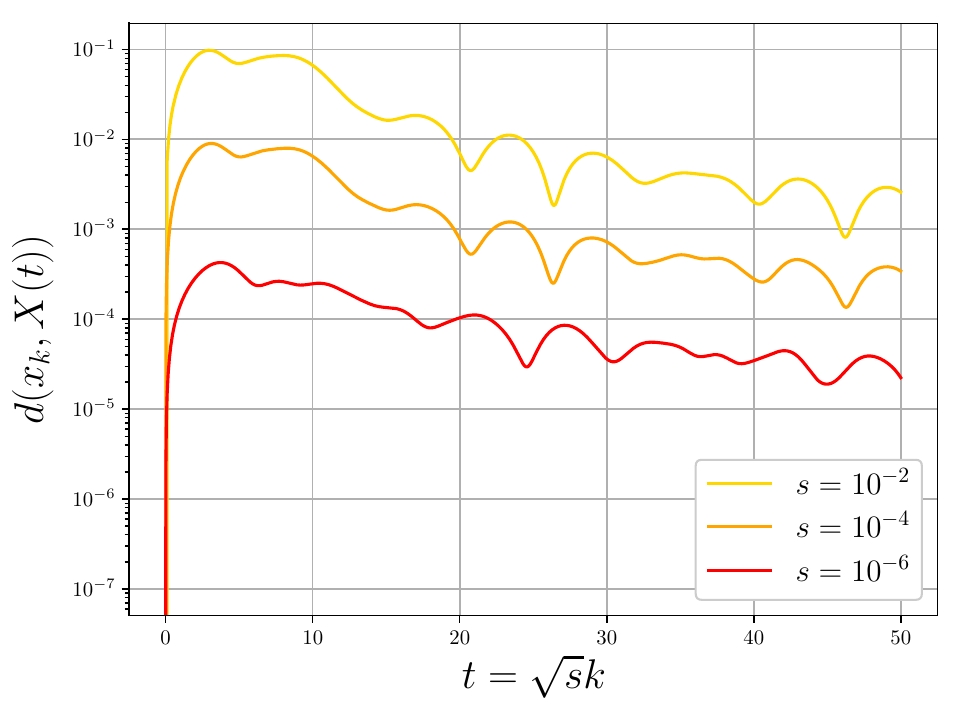}\label{fig:ode-c2}}
	\caption{Convergence of RNAG-C to the solution of ODE \eqref{eq:rnag-ode-c}.}\label{fig:ode-c}
\end{figure*}

\begin{figure*}[ht]
	\centering
	\subfigure[Error vs. \# of iterations]{\includegraphics[width=0.45\textwidth]{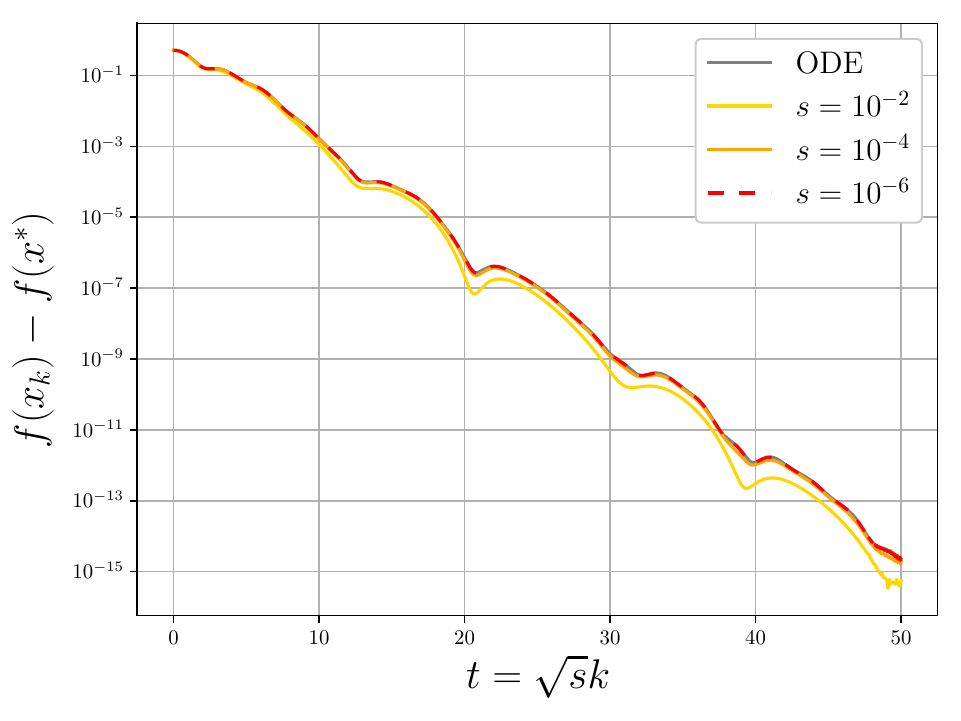}\label{fig:ode-sc1}}
	\subfigure[Distance vs. \# of iterations]{\includegraphics[width=0.45\textwidth]{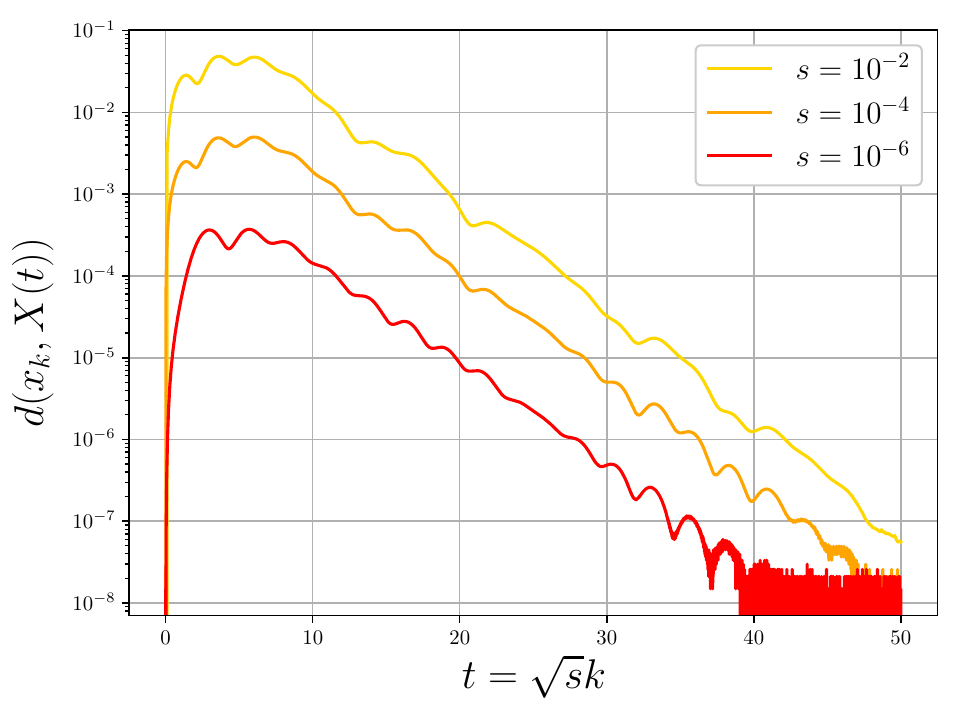}\label{fig:ode-sc2}}
	\caption{Convergence of RNAG-SC to the solution of ODE \eqref{eq:rnag-ode-sc}.}\label{fig:ode-sc}
\end{figure*}

\section{Proofs for \cref{sec:experiments}}
\label{app:exp}

\experimenta*

\begin{proof}
	For $x\in\mathbb{S}^{d-1}\subseteq\mathbb{R}^d$ and a unit tangent vector $v\in T_{x}M$,
	we have
	\[
	\exp_{x}(tv)=\frac{x+\tan(t)\,v}{\left\Vert x+\tan(t)\,v\right\Vert }=\frac{x+\tan(t)\,v}{\sec(t)}
	\]
	for $t\in I$, where $I$ is a small interval containing $0$.
	We consider the function $h:I\rightarrow M$ defined as
	\begin{align*}
		h(t) & =f\left(\exp_{x}(tv)\right)\\
		& =-\frac{1}{2}\cos^{2}(t)\left(x+\tan(t)\,v\right)^{\top}A\left(x+\tan(t)\,v\right)\\
		& =-\frac{1}{2}h_{1}(t)h_{2}(t),
	\end{align*}
	where $h_{1}(t)=\cos^{2}(t)$ and $h_{2}(t)=\left(x+\tan(t)\,v\right)^{\top}A\left(x+\tan(t)\,v\right)$.
	Note that $h_{1}(0)=1$, $h_{1}'(0)=0$, $h_{1}''(0)=-2$, $h_{2}(0)=x^{\top}Ax$, $h_{2}'(0)=2v^{\top}Ax$, and $h_{2}''(0)=2v^{\top}Av$.
	Now, by the product rule, we have
	\[
	h''(0)=-\frac{1}{2}h_{1}''(0)h_{2}(0)-h_{1}'(0)h_{2}'(0)-\frac{1}{2}h_{1}(0)h_{2}''(0)=x^{\top}Ax-v^{\top}Av.
	\]
	Because Rayleigh quotient is always in $\left[\lambda_{\min},\lambda_{\max}\right]$,
	we have $\vert h''(0) \vert \leq\left(\lambda_{\max}-\lambda_{\min}\right)$. This
	shows that $f$ is geodesically $\left(\lambda_{\max}-\lambda_{\min}\right)$-smooth.
\end{proof}

\experimentb*

\begin{proof}
	It is enough to show that the function $x\mapsto d\left(x,p_i\right)^2$ is {\color{red}geodesically $2$-strongly} convex. When $K_{\max}\leq 0$, we have $\delta=1$. Let $\gamma:I\rightarrow M$ be a geodesic whose image is in $N$. It follows from \cref{prop:alimisis2} that
	\begin{align*}
		\frac{d^{2}}{dt^{2}}\frac{1}{2}d\left(\gamma(t),p_{i}\right)^{2} & =\frac{d}{dt}\left\langle \log_{\gamma(t)}\left(p_{i}\right),-\gamma'(t)\right\rangle \\
		& =\left\langle D_{t}\log_{\gamma(t)}\left(p_{i}\right),-\gamma'(t)\right\rangle +\left\langle \log_{\gamma(t)}\left(p_{i}\right),-\gamma''(t)\right\rangle .
	\end{align*}
	Note that $\gamma''(t)=0$ because $\gamma$ is a geodesic. Now, \cref{prop:alimisis} gives $\frac{d^{2}}{dt^{2}}\frac{1}{2}d\left(\gamma(t),p_{i}\right)^{2}\geq1$.
\end{proof}

\end{document}